\documentclass[reqno,11pt]{amsart}

\usepackage{amsmath, amsthm, amssymb, amsfonts, amsthm,tikz}
\usepackage{amsxtra, amscd, geometry, graphicx,epic,eepic}
\usepackage[all,cmtip]{xy}
\usepackage{mathrsfs}
\usepackage{hyperref}
\usepackage{graphicx, nicefrac}
\usepackage{enumerate}
\usepackage{bbm}

\makeatletter
\newsavebox{\@brx}
\newcommand{\llangle}[1][]{\savebox{\@brx}{\(\m@th{#1\langle}\)}%
  \mathopen{\copy\@brx\kern-0.5\wd\@brx\usebox{\@brx}}}
\newcommand{\rrangle}[1][]{\savebox{\@brx}{\(\m@th{#1\rangle}\)}%
  \mathclose{\copy\@brx\kern-0.5\wd\@brx\usebox{\@brx}}}
\makeatother

\voffset -0.3in
\setlength{\textwidth}{16truecm}
\setlength{\textheight}{22.5truecm}
\oddsidemargin 0pt
\evensidemargin 0pt

\linespread{1}

\theoremstyle{definition}
\newtheorem{theorem}{Theorem}         
\newtheorem{proposition}[theorem]{Proposition}
\newtheorem{cor}[theorem]{Corollary}
\newtheorem{lemma}[theorem]{Lemma}

\newtheorem{definition}[theorem]{Definition}

\newtheorem{remark}[theorem]{Remark}
\newtheorem*{Ex1*}{Example 1}
\newtheorem*{Ex2*}{Example 2}
\newtheorem*{Ex3*}{Example 3}
\newtheorem*{Ex4*}{Example 4}

\newtheorem*{rmk*}{Remark}

\newcommand{\SSS}{\widetilde{\mathcal S}}
\newcommand{\Tr}{\text{Tr}}
\renewcommand{\S}{\mathcal{S}}
\newcommand{\W}{\mathcal{W}}
\newcommand{\A}{\mathcal{A}}
\newcommand{\N}{\mathcal{N}}
\newcommand{\R}{\mathcal{R}}

\numberwithin{equation}{section}

\title[Reduced quadratic irrationals arising from the OCF expansion]{Distribution of the reduced quadratic irrationals arising from the odd continued fraction expansion}
\author{Maria Siskaki}

\address{Department of Mathematics, University of Illinois at Urbana-Champaign,
Urbana, IL 61801}

\address{E-mail:  siskaki2@illinois.edu}

\begin{document}
             
\maketitle

\begin{abstract}
This paper investigates the quadratic irrationals that arise as periodic points of the Gauss type shift associated to the odd continued fraction expansion. It is shown that these numbers, which we call O-reduced, when ordered by the length of the associated closed primitive geodesic on some modular surface $\Gamma \backslash \mathbb{H}$, are equidistributed with respect to the Lebesgue absolutely continuous invariant probability measure of the Odd Gauss shift.
\end{abstract}

\section{Introduction}

According to Vall\'ee's classification in \cite{Vallee1} and \cite{Vallee2}, there are six classical Euclidean division algorithms. Their associated continued fraction (CF) expansions (see Table \ref{table vallee}) generate measure preserving transformations, known as Gauss shifts.

\begin{table}[h]\label{table vallee}
    \centering
    \begin{tabular}{|l|r|}
    \hline
    \textbf{Division Algorithm}     &  \textbf{CF expansion} \\
    \hline
      (G): By-default division   & Regular CF (RCF)\\
     \hline
    (B): By-excess division & Backwards CF (BCF)\\
     \hline
     (K): Centered division & Nearest Integer CF (NICF)\\
     \hline
    (E): Even division & Even CF (ECF) \\
     \hline
    (O): Odd division  & Odd CF (OCF) \\
     \hline
     (T): Subtractive algorithm & Lehner CF (Farey of RCF)\\
     \hline
     
    \end{tabular}
    \vspace{0.2cm}
    \caption{The Division Algorithms and their associated CF expansions (\cite{Vallee1}).}
\end{table}
The prototype example is the RCF. Every $x \in (0,1) \setminus \mathbb{Q}$ has a unique RCF expansion:
\begin{equation}\label{RCF expansion}
x=[a_1,a_2,\ldots]:=\cfrac{1}{a_1+\cfrac{1}{a_2+\cfrac{1}{\ddots}}}\, ,\qquad a_i \in \mathbb{N}:=\{1,2,3,\ldots\}.
\end{equation}

The RCF Gauss map $T$ is defined on $[0,1)$ by $T(0)=0$ and 
\begin{equation*}
    T(x)=  \frac{1}{x}-\bigg\lfloor \frac{1}{x}\bigg\rfloor, \quad  \text{if } x \neq 0.
\end{equation*}
The map $T$ acts on expansions of the form (\ref{RCF expansion}) as a shift: $T([a_1,a_2,a_3,\ldots])= [a_2,a_3,\ldots]$, and the digits of $x$ are completely recaptured by the iterates of $T$ as $a_1 =\lfloor 1/x\rfloor$,
$a_{n+1}=\lfloor 1/T^n (x)\rfloor$, $n\geq 1$. The probability
measure $\mu:=\frac{dx}{(1+x)\log 2}$ is $T$-invariant, and $T$ is exact. See \cite{KMS} for more on the ergodic properties of $T$.

Since $T$ is a shift, it is clear that the periodic points of $T$ are precisely the numbers with purely periodic RCF expansion. It is known \cite{Ono} 
that this set coincides with the set of reduced quadratic irrationals (QIs)
\begin{equation*}
 \{\omega \in [0,1) \mid \omega \text{ is a quadratic irrational with conjugate } \omega^* \in (-\infty,-1)\}.
\end{equation*}
The connection between $\omega$ and $\omega^*$ is given by the Galois formula (\cite{Ga})
\begin{equation*}
    [\, \overline{a_1, a_2,\ldots,a_n}\, ]^*= -\frac{1}{[ \, \overline{a_n,\ldots,a_2,a_1} \, ]},
\end{equation*}
where the notation $x=[\, \overline{a_1, a_2,\ldots,a_n} \, ]$ means that $x$ is periodic, and the block of digits $a_1, \ldots, a_n$ forms a (not necessarily least) period  of $x$.

The reduced QIs $\omega$ are naturally ordered by their length $\varrho(\omega)$, which measures the length of the closed primitive geodesic $\hat{\gamma}_\omega$ on the modular surface, obtained by projecting the geodesic $\gamma_\omega$ with endpoints at $\frac{1}{\omega}$ and $\frac{1}{\omega^*}$ on the hyperbolic half-plane $\mathbb{H}$. This geometric picture comes from \cite{Se}, where the two endpoints of  $\gamma_\omega$ are in $(1,\infty)$ and $(-1,0)$ respectively. For this reason, in this paper we will consider quadratic irrationals on $[1, \infty)$, even though we previously defined the RCF Gauss map and the set of reduced QIs  on $[0,1)$. All the definitions and results can be formulated in either of the two intervals $[0,1)$ or $[1, \infty)$ by conjugation by $x \mapsto \frac{1}{x}$.

A number theoretical approach initiated in \cite{KO}, further developed in \cite{Bo} and extended in \cite{Ustinov}, shows that the reduced QIs $\omega >1$, when ordered by their length $\varrho(\omega)$,  are equidistributed with respect to the Gauss measure $\mu$, together with an effective error term for the asymptotic formula: for every $\epsilon >0$ and $\alpha >1$ we have
\begin{align}\label{RCF equid}
    \bigg| \bigg\{ \omega >\alpha \  \Big| \ 
        \omega  \text{ reduced QI, }
        \varrho (\omega) \leq R 
     \bigg\} \bigg | & = \frac{3}{\pi^2} \ e^R  \int_\alpha^ \infty \frac{du}{u(u+1)} \ + \ \text{O}_{\epsilon} \big ( e^{(3/4 + \epsilon )R} \big),  
\end{align}
where $\frac{du}{u(u+1)}$ is the pushforward measure of $\mu$ on $[1,\infty)$. This result was proved on $[0,1]$ by Ustinov in \cite{Ustinov} and is reformulated on $[1,\infty)$ by conjugation by $x \mapsto \frac{1}{x}$, as mentioned above. 

This approach relies essentially on applications of the Weil bound for Kloosterman sums. The distribution of the reduced QIs had  previously been studied by Pollicott \cite{Po}, followed by Faivre \cite{Fa}, also more recently by Kelmer \cite{Ke} using Perron-Frobenius operators. However, their method involves the use of a Tauberian theorem, which does not allow for an effective error term in the asymptotic formula (\ref{RCF equid}). 

The distribution of the periodic points of the other Gauss shifts in Table \ref{table vallee} (or equivalently of certain types of reduced QIs) has been studied in the case of the ECF and the BCF expansions in \cite{BS}, and of the Lehner CF expansions in \cite{He}. The NICF and Nakada $\alpha$-expansions \cite{Nak, Kra1} are currently being investigated by the author.

Various problems related to OCF expansions have been studied in   \cite{BM, BV, HartKra, Kalp, Ri1, Ri2, Sc, Ustinov2}. In this paper we investigate the distribution of the O-reduced QIs, defined below. When ordered appropriately, they are equidistributed with respect to the invariant measure $\mu_o$ of the Odd Gauss shift $T_o$ (Corollary \ref{cor equidist of OCF QI}). Even more, we will show that the pairs $(\omega, -\omega^*)$ of O-reduced QIs are uniformly distributed with respect to the invariant measure $\widetilde{\mu}_o$ of the natural extension of $T_o$ (Theorem \ref{thm equidist of OCF QI}). 

One difference from the other types of QIs studied before 
is the range for $\omega^*$, which is obtained by  projecting the domain of the associated natural extension on the $y$-axis. Furthermore, the  matrices in the set $\SSS_e(\alpha, \beta;N)$ (cf. (\ref{defn of S_e})) that parametrizes the O-reduced QIs, need to satisfy additional conditions on the parity of the entries. In determining these matrices, the monotonicity properties of the O-convergents needs to be taken into account as well. All the above result in sums of different form in Section \ref{section computing asymptotic behavior}. Finally, Lemma \ref{bocaustinov} part (ii) provides an extension of Lemma 2 in \cite{Ustinov}, which is necessary because in this case the boundaries of the regions $\Omega_m(e, \alpha, \beta;N)$ (cf. (\ref{regions omega})) may be defined by lines of slope other than $\pm 1$ and $y$-intercept that is not an integer.

Every irrational number $x>1$ has a unique OCF-expansion 
\begin{equation}\label{OCFexpansion}
    x= [(a_1,e_1),(a_2,e_2),\ldots] = a_1+\cfrac{e_1}{a_2+\cfrac{e_2}{a_3+\ddots}},
\end{equation}
where  $a_i \in 2\mathbb{N}-1$, $e_i =\pm 1$, and $a_i+e_i \geq 2$, $\forall i \geq 1$. 

We denote $G= \frac{\sqrt{5}+1}{2}$, and $g= \frac{\sqrt{5}-1}{2}$.

\begin{definition}
A quadratic irrational (QI) $\omega>1$ is called \emph{O-reduced} if its conjugate $\omega^*$ satisfies $-G< \omega^* < 2-G$. Denote the set of O-reduced QI's by $\R_O$.
\end{definition}

In the notation of (\ref{OCFexpansion}), and similar to the RCF expansion, a Lagrange-type theorem (\cite{BM}, \cite{Daj4}, \cite{Pa}) characterizes $\R_O$ completely and provides the corresponding Galois formula for the conjugates of the O-reduced QIs:

\begin{proposition}\label{proposition BM}
A real number $\omega>1$ is an O-reduced QI if and only if it has a purely periodic OCF expansion. Moreover, if 
\begin{equation*}
    \omega =  [\,\overline{(a_1,e_1),(a_2,e_2),\ldots, (a_n,e_n)}\,],
\end{equation*}
 then 
\begin{equation*}
    \omega^* = - \llangle \, \overline{(a_n,e_n), \ldots , (a_1,e_1)} \,\rrangle ,
\end{equation*}
where 
\begin{equation}\label{grotesque definition}
    \llangle (a_1,e_1),(a_2,e_2), \ldots\rrangle := \cfrac{e_1}{a_1 + \cfrac{e_2}{a_2 + \ddots}}.
\end{equation}
  \end{proposition}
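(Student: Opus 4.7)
The plan is to adapt the classical Galois–Lagrange argument for the regular continued fraction to the OCF setting, where the signed entries $e_i = \pm 1$ force extra bookkeeping and require matrices in $GL_2(\mathbb{Z})$ rather than $SL_2(\mathbb{Z})$. Associate to each admissible digit $(a,e)$ the matrix
\[
M_{(a,e)} := \begin{pmatrix} a & e \\ 1 & 0 \end{pmatrix},
\]
which represents the inverse of the shift $T_o$ under the Möbius action on $(1,\infty)$. If $\omega = [\,\overline{(a_1,e_1), \ldots, (a_n,e_n)}\,]$ is purely periodic with period $n$, then $T_o^n(\omega)=\omega$, so $\omega$ is a fixed point of the Möbius transformation attached to $M := M_{(a_1,e_1)} \cdots M_{(a_n,e_n)}$. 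Writing $M = \begin{pmatrix} A & B \\ C & D \end{pmatrix}$, this yields the quadratic $Cx^2 + (D-A)x - B = 0$, whose second root is the Galois conjugate $\omega^*$, confirming that $\omega$ is a QI.

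For the Galois formula, let $P_n$ denote the matrix whose Möbius action on $y$ reproduces the finite-depth nested expression $y \mapsto e_n/(a_n + e_{n-1}/(a_{n-1} + \cdots + e_1/(a_1 + y)))$ underlying the $\llangle\cdot\rrangle$-notation. A direct induction verifies the identity $P_n = J M^T J$ with $J = \begin{pmatrix} 0 & 1 \\ 1 & 0 \end{pmatrix}$. A short linear-algebraic computation then shows that the fixed points of $M^T$ are $-1/\omega$ and $-1/\omega^*$, so those of $P_n$ are $-\omega^*$ and $-\omega$. The O-reduced condition selects which of these two corresponds to the convergent nested expression $\llangle \overline{(a_n,e_n),\ldots,(a_1,e_1)}\rrangle$, identifying it with $-\omega^*$ and yielding the stated Galois formula.

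For the converse direction (O-reduced implies purely periodic), invoke the Lagrange-type statement for OCF from \cite{BM, Daj4, Pa}: every quadratic irrational has an eventually periodic OCF expansion. To exclude a nontrivial preperiod, I would use the natural extension $\widetilde{T}_o$, which acts bijectively on a planar domain $\widetilde{\Omega} \subset (1,\infty) \times (-G, 2-G)$ containing precisely the pairs $(\omega, \omega^*)$ with $\omega$ O-reduced. Invertibility of $\widetilde{T}_o$ on $\widetilde{\Omega}$ provides $(\omega, \omega^*)$ with a well-defined infinite backward orbit that stays in $\widetilde{\Omega}$; combined with eventual periodicity, this forces the OCF expansion of $\omega$ to be purely periodic.

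The main obstacle is pinning down the exact range $(-G, 2-G)$ for the $y$-projection of $\widetilde{\Omega}$, since the golden-mean endpoints arise as limits of extremal admissible OCF orbits rather than as conjugates of any single periodic point. The admissibility constraints, namely $a_i$ odd, $e_i = \pm 1$, and $a_i + e_i \geq 2$ (so $a_i \geq 3$ whenever $e_i = -1$), must be reconciled with the geometry of $\widetilde{\Omega}$ through a case-by-case analysis on the sign $e_i$. Once this matching is verified, the remaining steps are routine manipulations within the matrix framework above.
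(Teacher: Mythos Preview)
The paper does not prove this proposition; it is stated with attribution to \cite{BM}, \cite{Daj4}, and \cite{Pa}, and no argument is given in the paper itself. So there is nothing to compare your proposal against within the paper.

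That said, your sketch is essentially correct and is along the lines of what one finds in those references. The matrix framework is right, the identity $P_n = J M^T J$ for the reversed word is the standard trick, and your computation that the fixed points of $M^T$ are $-1/\omega$ and $-1/\omega^*$ (hence those of $JM^TJ$ are $-\omega$ and $-\omega^*$) is correct. For the converse, your natural-extension argument is the clean way to do it and can be made precise in one line: since $\widetilde{T}_o$ maps $(\omega,-\omega^*)$ to $(T_o\omega,-(T_o\omega)^*)$ and is a bijection on its domain, eventual periodicity $\widetilde{T}_o^{k_0}(\omega,-\omega^*)=\widetilde{T}_o^{k_0+n}(\omega,-\omega^*)$ cancels to $(\omega,-\omega^*)=\widetilde{T}_o^{n}(\omega,-\omega^*)$, giving pure periodicity. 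Two minor remarks: the paper's natural extension has second coordinate $-\omega^*$ on $[1,\infty)\times[G-2,G]$ rather than $\omega^*$ on $(1,\infty)\times(-G,2-G)$, but this is only a sign convention; and you should be careful that the ``Lagrange-type statement'' you invoke from \cite{BM,Daj4,Pa} is only the weaker \emph{eventual} periodicity of QIs under OCF (which is indeed proved there independently), not the full statement you are trying to establish, so there is no circularity.
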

  
  The expansion (\ref{grotesque definition}) represents the grotesque continued fraction on $[G-2,G]$, introduced in \cite{Ri1} and \cite{Ri2}, and further studied in \cite{BM} and \cite{Sebe}.

To the end of ordering the O-reduced QIs, denote
\begin{align}\label{eq defn of A B matrices}
I & = \left( \begin{matrix} 1 & 0 \\ 0 & 1 \end{matrix}\right),  \quad A=  \left( \begin{matrix} 0 & 1 \\ 1 & 1 \end{matrix}\right), \quad B=  \left( \begin{matrix} 1 & 1 \\ 1 & 0 \end{matrix}\right), \quad \text{and } \\ 
\label{defn of Gamma}  \Gamma &= \{\sigma \in \text{PSL}(2,\mathbb{Z}) \mid \sigma \equiv I, A \text{ or } B \mod 2\}.
\end{align}

The closed primitive geodesics on the modular surface $\Gamma \backslash \mathbb{H}$ are precisely the geodesics that have a lift $\gamma_\omega$ to $\mathbb{H}$ with endpoints 

\begin{align*}
    \omega & = e[\, \overline{(a_1,e_1), (a_2,e_2), \ldots, (a_n,e_n)} \,], \text{ and } \\
    \omega^* & = -e \llangle \, \overline{(a_n,e_n), \ldots , (a_1,e_1)} \,\rrangle, \text{ for some } e = \pm 1,
\end{align*}
and such that $(-e_1)(-e_2) \cdots (-e_n)= +1$ (\cite{BM}). This is the OCF-analogue of the connection between the geodesics on PSL$(2,\mathbb{Z}) \backslash \mathbb{H}$ and the reduced QIs (\cite{Se}). 

Define the matrices

\begin{align}\label{Omega tilde defn}
 \nonumber   \Omega(\omega) & :=   \left( \begin{matrix} a_1 & e_1 \\ 1 & 0 \end{matrix}\right) \left( \begin{matrix} a_2 & e_2 \\ 1 & 0 \end{matrix}\right) \cdots \left( \begin{matrix} a_n & e_n \\ 1 & 0 \end{matrix}\right), \\
 \nonumber   &\\
    \widetilde{\Omega}(\omega) & := \begin{cases}
    \Omega(\omega), & \qquad  \text{if } \ (-e_1)(-e_2) \cdots (-e_n)= +1 \\
    \Omega(\omega)^2, & \qquad \text{if } \ (-e_1)(-e_2) \cdots (-e_n)= -1,
    \end{cases}
\end{align}
where $n$ is the least period of $\omega$.

Clearly, $\widetilde{\Omega}(\omega)$ has fixed points $\omega$ and $\omega^*$, and therefore preserves the geodesic $\gamma_\omega$ in $\mathbb{H}$.

Denote by $\mathfrak{r}(\sigma)$ the spectral radius of a $2\times 2$ matrix $\sigma$. It is well known that for $\sigma \in \text{SL}(2,\mathbb{Z})$ and $z \in \mathbb{H}$ on the axis of $\sigma$, the hyperbolic distance formula becomes

\begin{equation*}
    d(z, \sigma(z)) = 2 \log \mathfrak{r} (\sigma).
\end{equation*}

Thus, the closed primitive geodesic on $\Gamma \backslash \mathbb{H}$ with lift $\gamma_\omega$ has length $2 \log \mathfrak{r}(\widetilde{\Omega}(\omega))$. This induces a natural ordering of the O-reduced QIs:

\begin{definition}
The length of an O-reduced QI $\omega$ is defined as
\begin{equation*}
    \varrho_o(\omega) := 2 \log \mathfrak{r}(\widetilde{\Omega}(\omega)),
\end{equation*}
with $\widetilde{\Omega}(\omega)$ as in (\ref{Omega tilde defn}).
\end{definition}
Note that if $\omega $ is both a regular and an O-reduced QI, then it may be that   $\varrho_o(\omega) \neq \varrho(\omega)$, where $\varrho(\omega)$ is its length as a regular reduced QI. More details are given in Appendix 1.

We will prove the following estimate:
\begin{equation}\label{estimate}
    \big| \big\{ \omega \in  \R_O \  \big| \ \begin{array}{l}
        \varrho_o (\omega) \leq R 
    \end{array} \big\} \big |  = \frac{3 \log G}{4 \zeta(2)} e^R \ + \ O(e^{(\frac{3}{4}+ \epsilon)R}), \qquad \text{as } R \rightarrow \infty.
\end{equation}
This is a consequence of our main result:
\begin{theorem}\label{thm equidist of OCF QI}
 For every $\alpha  \geq 1$, $\beta_1 \geq G+1$, and $\beta_2 \geq G-1$ we have that 
 \begin{align*}
    \bigg| \bigg\{ \omega \in \R_O \  \Big| \
\substack{ \varrho_o (\omega) \leq R, \\  \omega \geq \alpha, \  -\frac{1}{\beta_1} \leq -\omega^* \leq \frac{1}{\beta_2}} 
       \bigg\} \bigg | & = \frac{e^R}{4\zeta(2)}  \log \bigg( \frac{\alpha \beta_1}{\alpha \beta_1 -1} \cdot  \frac{\alpha \beta_2 +1}{\alpha \beta_2} \bigg) + O(e^{(\frac{3}{4}+ \epsilon)R}) \\ 
    & = \frac{3 \log G }{4\zeta(2)}e^R \iint_{[\alpha,\infty) \times [-\frac{1}{\beta_1}, \frac{1}{\beta_2}]}  d\widetilde{\mu}_o + O(e^{(\frac{3}{4}+ \epsilon)R}),
\end{align*}
where $\widetilde{\mu}_o= \frac{1}{3 \log G} \cdot \frac{1}{(x+y)^2} \ dx dy$ is the invariant probability measure of the natural extension on $[1,\infty) \times [G-2,G]$ of the Odd Gauss map $T_o$, defined in Section \ref{section OCF}.
 \end{theorem}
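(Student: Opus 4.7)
The plan is to adapt the lattice-point-counting strategy of Ustinov \cite{Ustinov} (used for the RCF equidistribution (\ref{RCF equid})) to the OCF setting, accounting for the three features singled out in the introduction: the two-sided range of $\omega^\ast$ coming from projection of the natural extension domain, the parity constraints on the matrix entries imposed by membership in $\Gamma$, and the monotonicity of O-convergents affecting which matrices actually arise from primitive OCF periods.

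First I would translate the spectral-radius ordering into a trace bound. By Proposition \ref{proposition BM}, each $\omega \in \R_O$ corresponds to a purely periodic OCF expansion, hence to the hyperbolic matrix $\widetilde{\Omega}(\omega) = \bigl(\begin{smallmatrix} a & b \\ c & d \end{smallmatrix}\bigr) \in \Gamma$, with $\omega$ and $\omega^\ast$ its fixed points. Since $\varrho_o(\omega) = 2\log\mathfrak{r}(\widetilde{\Omega}(\omega))$, the condition $\varrho_o(\omega) \leq R$ becomes $a+d \leq N + O(N^{-1})$ with $N = e^{R/2}$, while $\omega = \tfrac{(a-d)+\sqrt{(a+d)^2 - 4\det M}}{2c}$ and the analogous formula for $\omega^\ast$ convert the bounds $\omega \geq \alpha$ and $-\tfrac{1}{\beta_1} \leq -\omega^\ast \leq \tfrac{1}{\beta_2}$ into explicit linear inequalities on $(a,c,d)$ (after eliminating $b$ via $ad-bc = \pm 1$). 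The sign of $\det M$ records the parity $(-e_1)\cdots(-e_n)$, and the condition $M \bmod 2 \in \{I,A,B\}$ (cf.\ (\ref{eq defn of A B matrices}), (\ref{defn of Gamma})) encodes the parity of the digits $a_i$.

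Second, I would show that the O-reduced QIs are in bijection with an explicit family $\widetilde{\mathcal{S}}_e(\alpha,\beta;N)$ of primitive matrices, and count this family by fixing the residue class of $M$ mod $2$ and each pair $(c,d)$, then summing over admissible $a$ with $ad \equiv \pm 1 \pmod{c}$ and $(a,c,d)$ in a planar region $\Omega_m(e,\alpha,\beta;N)$ determined by the inequalities above. Opening the congruence with additive characters produces incomplete Kloosterman sums $K(m,n;c)$; the Weil bound $|K(m,n;c)| \ll c^{1/2+\epsilon}$ yields a main term proportional to $N^2 \cdot \text{area}(\Omega_m)/\zeta(2)$ and error $O(N^{3/2+\epsilon}) = O(e^{(3/4+\epsilon)R})$. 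Summing the areas over the finitely many residue classes and signs $e = \pm 1$, and using primitivity via a standard M\"obius/log $N$ routine to extract the factor $\log G$, should recover the closed form $\tfrac{1}{4}\log\!\bigl(\tfrac{\alpha\beta_1}{\alpha\beta_1-1}\cdot\tfrac{\alpha\beta_2+1}{\alpha\beta_2}\bigr)$ and, after rewriting, match the integral of $\widetilde{\mu}_o$.

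The main obstacle is the counting step. Unlike the RCF case, the regions $\Omega_m(e,\alpha,\beta;N)$ here have boundaries with slopes other than $\pm 1$ and non-integer intercepts, because the endpoint set $[G-2,G]$ for $-\omega^\ast$ and the mixed parity classes break the integrality used in Lemma 2 of \cite{Ustinov}. The stated extension Lemma \ref{bocaustinov}(ii) is designed to handle polygonal regions with such boundaries while preserving the $c^{1/2+\epsilon}$ saving from Weil; verifying that the parity-class decomposition does not degrade this saving, and carefully matching the resulting main terms with the natural-extension density $\tfrac{1}{3\log G}\cdot \tfrac{1}{(x+y)^2}$, is where the bulk of the technical work will lie.
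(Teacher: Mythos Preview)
Your overall architecture---replace spectral radius by trace, parametrize the relevant matrices, and count lattice points on modular hyperbolas inside polygonal regions via Weil bounds on Kloosterman sums---matches the paper's, and you correctly flag Lemma~\ref{bocaustinov}(ii) as the technical extension needed for the non-$\pm 1$ slopes. Two points, however, are off and would derail the computation as written.

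First, the factor $\log G$ does not come from any primitivity or M\"obius argument. In the paper the over-counting caused by non-primitive periods (words $w_{(\omega,k)}$ with $k\geq 2$) is handled by a direct crude bound showing its total contribution is $O(N\log N)$, well inside the error term; see Remark~\ref{remark over-counting}. The constant $3\log G$ is purely the normalizing mass $\iint_{[1,\infty)\times[G-2,G]}(x+y)^{-2}\,dx\,dy$ of $\widetilde{\mu}_o$, and the main term $\tfrac{N^2}{4\zeta(2)}\log\bigl(\tfrac{\alpha\beta_1}{\alpha\beta_1-1}\cdot\tfrac{\alpha\beta_2+1}{\alpha\beta_2}\bigr)$ already equals the stated integral times $3\log G$ without any sieve.

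Second, the constraints $\omega\geq\alpha$ and $-\tfrac{1}{\beta_1}\leq-\omega^\ast\leq\tfrac{1}{\beta_2}$ do \emph{not} become linear inequalities on $(a,c,d)$ via the quadratic formula for the fixed points. The paper instead uses approximation by convergents (Lemma~\ref{lemma approximation by convergents}): $\omega$ is close to $p_{n-1}/q_{n-1}=b/d$ and $-1/\omega^\ast$ is close to $e_n p_n/p_{n-1}=e_n a/b$, so the QI constraints translate, up to a controllable error, into the ratio conditions $b/d\geq\alpha$ and $a/b\geq\beta_i$. The gap between the exact and approximate conditions is bounded by the auxiliary sets $A_{N,1},A_{N,2}$ (Lemma~\ref{lemmaAasymptotics}). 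This is why $b$, not $c$, serves as the modulus: one fixes $z=b=m$ and counts $(x,y)=(a,d)$ on $xy\equiv e\pmod{m}$ (or $\pmod{2m}$, depending on the parity class) inside the region $\Omega_m(e,\alpha,\beta;N)$; summation over $m$ with the three parity classes then produces the three sums $\A_i$ of Section~\ref{section computing asymptotic behavior}.
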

 
 Taking $\beta_1 = G+1$, $\beta_2= G-1$ and applying (\ref{integral a to infintiy of mu o}), we get that the OCF reduced quadratic irrationals are equidistributed with respect to the invariant probability measure $\mu_o$ of $T_o$, given in (\ref{mu_o on 1 infinity}):
 \begin{cor}\label{cor equidist of OCF QI}
 For every $\alpha \geq 1$ we have that 
 \begin{align*}
    \bigg| \bigg\{ \omega \in \R_O \  \Big| 
          \varrho_o (\omega) \leq R, 
      \ \omega \geq \alpha  
       \bigg\} \bigg | & = \frac{3 \log G}{4 \zeta(2)} e^R \int_{\alpha}^\infty d\mu_o(x) \ + \ O(N^{(\frac{3}{4}+ \epsilon)R}).
 \end{align*}
 \end{cor}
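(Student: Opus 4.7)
The corollary is a direct specialization of Theorem \ref{thm equidist of OCF QI}. The plan is to choose the boundary parameters $\beta_1 = G+1$ and $\beta_2 = G-1$ so that the admissible range for $-\omega^*$ collapses to the one built into the definition of $\R_O$, and then to convert the resulting logarithmic main term into the integral $\int_\alpha^\infty d\mu_o$.

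First I would verify the range compatibility. Using the identities $G^2=G+1$ and $Gg=1$ with $g=G-1$, we obtain $1/(G+1)=g^2=2-G$ and $1/(G-1)=1/g=G$, so the condition $-1/\beta_1\leq -\omega^*\leq 1/\beta_2$ in the theorem reads $G-2 \leq -\omega^* \leq G$, equivalently $-G \leq \omega^* \leq 2-G$. By the definition of $\R_O$, every O-reduced quadratic irrational automatically satisfies this inequality, so the counting set in the corollary (after also imposing $\omega\geq\alpha$) coincides with the one in Theorem \ref{thm equidist of OCF QI} for this choice of $\beta_1,\beta_2$.

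Next I would substitute $\beta_1=G+1$, $\beta_2=G-1$ into the explicit main term of the theorem, obtaining
\begin{equation*}
\frac{e^R}{4\zeta(2)}\log\!\left(\frac{\alpha(G+1)}{\alpha(G+1)-1}\cdot\frac{\alpha(G-1)+1}{\alpha(G-1)}\right)+O(e^{(3/4+\epsilon)R}).
\end{equation*}
To recognize this as $\tfrac{3\log G}{4\zeta(2)}e^R\int_\alpha^\infty d\mu_o(x)$, I would invoke the identity (\ref{integral a to infintiy of mu o}): the density (\ref{mu_o on 1 infinity}) of $\mu_o$ on $[1,\infty)$ is, up to the normalization $1/(3\log G)$, a sum of two partial fractions whose antiderivatives on $[\alpha,\infty)$ produce exactly the two logarithmic summands above. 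The error term $O(e^{(3/4+\epsilon)R})$ is inherited verbatim from the theorem, and no new estimate is required — the genuine analytic work (the parametrization by matrices in $\SSS_e(\alpha,\beta;N)$, the parity constraints, and the geometric lattice-point count over the regions $\Omega_m(e,\alpha,\beta;N)$) has already been carried out in the proof of Theorem \ref{thm equidist of OCF QI}, so there is no substantive obstacle at the level of the corollary itself.
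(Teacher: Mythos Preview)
Your proposal is correct and follows exactly the approach indicated in the paper: set $\beta_1=G+1$, $\beta_2=G-1$ in Theorem~\ref{thm equidist of OCF QI} and invoke (\ref{integral a to infintiy of mu o}) to rewrite the logarithmic main term as $3\log G\cdot\int_\alpha^\infty d\mu_o$. Your explicit verification via the golden-ratio identities that this choice of $\beta_1,\beta_2$ makes the constraint on $\omega^*$ vacuous for $\omega\in\R_O$ is a helpful elaboration, but the underlying argument is identical to the paper's.
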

Combining (\ref{estimate}) and Corollary \ref{cor equidist of OCF QI} we obtain
\begin{cor}\label{cor actual equidis of OCF QI}
For every $\alpha \geq 1$ we have that
\begin{equation*}
\lim\limits_{R \rightarrow \infty} \ \frac{{ \big|\big\{ \omega \in \R_O \  \big| \   \varrho_o (\omega) \leq R,\ \omega \geq \alpha \big\} \big |}}{  {\big| \big\{ \omega \in \R_O \  | \ \varrho_o (\omega) \leq R \big\} \big |}} = \int_{\alpha}^{\infty} d \mu_o.
\end{equation*}
\end{cor}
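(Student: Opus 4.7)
The plan is to obtain this corollary as an immediate consequence of taking the ratio of the two asymptotic formulas already stated: the asymptotic (\ref{estimate}) for the denominator and Corollary \ref{cor equidist of OCF QI} for the numerator. Explicitly, I would write the denominator as $D(R) = \frac{3\log G}{4\zeta(2)} e^R + O\!\left(e^{(3/4+\epsilon)R}\right)$ and the numerator as $N(\alpha,R) = \frac{3\log G}{4\zeta(2)} e^R \int_\alpha^\infty d\mu_o + O\!\left(e^{(3/4+\epsilon)R}\right)$. Both expansions share the same leading constant $\frac{3\log G}{4\zeta(2)}$ and the same main exponential factor $e^R$, with error term of strictly smaller exponential order.

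Dividing the numerator and denominator by $\frac{3\log G}{4\zeta(2)} e^R$ then yields
\[
\frac{N(\alpha,R)}{D(R)} \;=\; \frac{\displaystyle\int_\alpha^\infty d\mu_o \;+\; O\!\left(e^{(-1/4+\epsilon)R}\right)}{1 \;+\; O\!\left(e^{(-1/4+\epsilon)R}\right)}.
\]
Choosing any $\epsilon < 1/4$, both error terms tend to $0$ as $R \to \infty$, and the limit of the ratio is exactly $\int_\alpha^\infty d\mu_o$.

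Since the two ingredients (\ref{estimate}) and Corollary \ref{cor equidist of OCF QI} are already established, this argument contains no substantive obstacle — it is essentially a one-line quotient of asymptotics. The only consistency check worth noting is that $\mu_o$ is a probability measure on $[1,\infty)$, so $\int_1^\infty d\mu_o = 1$ and (\ref{estimate}) agrees with the $\alpha = 1$ case of Corollary \ref{cor equidist of OCF QI}; this is what guarantees that the leading constants $\frac{3\log G}{4\zeta(2)}$ cancel cleanly in the ratio, leaving precisely the integral $\int_\alpha^\infty d\mu_o$ in the limit.
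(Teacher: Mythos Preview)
Your proposal is correct and follows exactly the same approach as the paper, which simply states that the corollary is obtained by combining (\ref{estimate}) and Corollary \ref{cor equidist of OCF QI}). Your write-up just makes explicit the trivial quotient-of-asymptotics argument that the paper leaves implicit.
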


Taking $\alpha=1$ in Theorem \ref{thm equidist of OCF QI} we obtain
\begin{cor}\label{cor grotesque}
For every $\beta_1 \geq G+1$, $\beta_2 \geq G-1$ we have that
\begin{equation*}
    \bigg| \bigg\{ \omega \in \R_O \  \Big| 
     \begin{array}{ll}   {\color{white}aa}     \varrho_o (\omega) \leq R, &  \\
      -\frac{1}{\beta_1} \leq - \omega^* \leq     \frac{1}{\beta_2}&  \end{array}\bigg\} \bigg |= \frac{3 \log G}{4\zeta(2)} e^R \int_{-\frac{1}{\beta_1}}^{\frac{1}{\beta_2}} d \mu_G \ + \ O(e^{(\frac{3}{4}+ \epsilon)R}),
\end{equation*}
where $\mu_G(x)= \frac{1}{3 \log G} \frac{dx}{x+1}$. 
\end{cor}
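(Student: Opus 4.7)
The plan is to deduce Corollary \ref{cor grotesque} directly from Theorem \ref{thm equidist of OCF QI} by specializing to $\alpha = 1$ and rewriting the resulting logarithm as an integral against the density $\mu_G$. First I would verify that the hypotheses match up. The theorem requires $\alpha \geq 1$, which is satisfied with equality. The constraint $\omega \geq 1$ is automatic for every O-reduced QI (O-reduced QIs are defined to be $> 1$), so it can be dropped from the counting set and the set in Corollary \ref{cor grotesque} coincides with the one in Theorem \ref{thm equidist of OCF QI} at $\alpha=1$. Moreover, from the identities $G^2 = G+1$ one has $2-G = \tfrac{1}{G+1}$ and $G = \tfrac{1}{G-1}$; hence $\beta_1 \geq G+1$ and $\beta_2 \geq G-1$ are exactly the conditions that force the interval $[-1/\beta_1,\,1/\beta_2]$ to lie inside $[G-2,\,G]$, where $-\omega^*$ takes its values for the natural extension.

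Substituting $\alpha = 1$ in the first line of Theorem \ref{thm equidist of OCF QI} gives the leading term
\[
\frac{e^R}{4\zeta(2)} \log\!\left( \frac{\beta_1}{\beta_1 - 1} \cdot \frac{\beta_2 + 1}{\beta_2} \right),
\]
with the error $O(e^{(3/4+\epsilon)R})$ inherited unchanged. A direct computation then yields
\[
\int_{-1/\beta_1}^{1/\beta_2} d\mu_G \;=\; \frac{1}{3\log G} \int_{-1/\beta_1}^{1/\beta_2} \frac{dx}{x+1} \;=\; \frac{1}{3\log G} \log\!\left( \frac{\beta_2 + 1}{\beta_2} \cdot \frac{\beta_1}{\beta_1 - 1} \right),
\]
so that multiplying by $\tfrac{3\log G}{4\zeta(2)}\,e^R$ reproduces precisely the leading term above. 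This gives the claimed asymptotic.

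There is no real obstacle here: the corollary is a direct specialization of the main theorem combined with an elementary integration. Conceptually, the content is simply that the $y$-marginal of $\widetilde{\mu}_o$ over $\{x \geq 1\}$ equals $\mu_G$, since
\[
\int_1^\infty \frac{dx}{(x+y)^2} \;=\; \frac{1}{1+y},
\]
and the normalization $\mu_G(x) = \tfrac{1}{3\log G}\,\tfrac{dx}{x+1}$ is chosen precisely so that the logarithmic main term of Theorem \ref{thm equidist of OCF QI} at $\alpha = 1$ is expressed as an integral against $\mu_G$ with the same prefactor $\tfrac{3\log G}{4\zeta(2)}$ that appears in Corollary \ref{cor equidist of OCF QI}.
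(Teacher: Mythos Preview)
Your argument is correct and is exactly the approach the paper takes: the corollary is obtained by setting $\alpha=1$ in Theorem~\ref{thm equidist of OCF QI}, and you have carried out the elementary integral computation that the paper leaves implicit. There is nothing to add.
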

Note that $\mu_G$ is the unique Lebesgue absolutely continuous probability invariant measure of the Gauss shift associated to the dual of the OCF expansion, called the grotesque continued fraction (\cite{Ri1}, \cite{Ri2}, \cite{Sebe}).

The organization of this paper is as follows: taking $N=\lfloor e^{R/2} \rfloor$ and replacing the inequality $\varrho_o(\omega) \leq R$ by $\Tr(\widetilde{\Omega}(\omega)) \leq N$,  we study the asymptotic behavior ($N \rightarrow \infty$) of the set 
\begin{equation}\label{defn of R_O}
    R_O(\alpha, \beta_1, \beta_2;N) = \Big\{ \omega \in \R_O \mid \omega \geq \alpha,\ -\frac{1}{\beta_2}< \omega^* < \frac{1}{\beta_1},\ \Tr(\widetilde{\Omega}(\omega)) \leq N \Big \},
\end{equation}
where $  \alpha >1, \ \beta_1 >G+1,\ \beta_2 > G-1$, and $\widetilde{\Omega}(\omega)$ as in (\ref{Omega tilde defn}). This set has the same asymptotic behavior as the set of the left hand side of  Theorem \ref{thm equidist of OCF QI} (cf. Lemma \ref{lemma replace spectral radius by trace}), i.e. replacing the spectral radius by the trace of $\widetilde{\Omega}(\omega)$ does not change the asymptotics.

In Section \ref{section OCF} we discuss the properties of the  OCF Gauss map system and  obtain the  $n$-th OCF-digit from the convergents of level $n$ and $n-1$ (Lemma \ref{digitfromconverg}). This information is necessary in order to parametrize the O-reduced QIs (cf. Remark \ref{remark why lemma is needed}). In  Subsection \ref{subsection 3.1} we reduce the problem of counting numbers $\omega$ with purely periodic OCF expansion to counting specific words $w$ which are in one-to-one correspondence with matrices $\sigma $ in a specific set $\mathcal{P}$ (cf. (\ref{defn of set P})). We then obtain a  characterization of $\mathcal{P}$ using the sets of matrices $\mathcal{S}_e$ (cf. Lemma \ref{lemma P= S+ union S-} and Lemma \ref{lemmaP=S}). Observe that counting the $\omega$'s with purely periodic OCF expansion  results in over-counting the $\omega$'s in $ \R_O$. In Subsection \ref{subsection 3.2} we reduce the problem of counting the $\omega$'s in  $R_O(\alpha, \beta_1, \beta_2;N)$ (cf. (\ref{defn of R_O})) to counting words $w$ in  $\W(\alpha, \beta_1, \beta_2;N)$ (cf. (\ref{defn of W})), showing that the over-counting does not affect the asymptotic behavior of the former as $N \rightarrow \infty$ (Remark \ref{remark over-counting}). In turn, instead of counting such $w$'s, we reduce the problem to counting matrices $\sigma$ in  sets of the form  $\widetilde{S}_e(\alpha, \beta;N)$ (cf. (\ref{defn of S_e}) and Lemma \ref{lemmaW=S+S}). We then show that counting such matrices $\sigma$ amounts to counting lattice points on modular hyperbolas that fall inside certain regions on the plane which depend on $N$ (Lemma \ref{Parametrization-}). In Section \ref{section computing asymptotic behavior} we count these lattice points and determine their asymptotic behavior as $N \rightarrow \infty$ to conclude the proof of Theorem \ref{thm equidist of OCF QI}. The difference between the (regular) reduced, the O- , E-, and the B- reduced QIs is illustrated in Appendix 1. The proof of Lemma \ref{bocaustinov} part (ii) is presented in Appendix 2. In Appendix 3 we prove some properties of the OCF convergents which will be used throughout.

\vspace{0.2cm}

\textbf{Acknowledgments. } The author would like to thank the referees, as their comments and suggestions significantly improved the presentation of this paper.

\section{Odd continued fractions}\label{section OCF}

Let $x>1$ and $a_1=a_1(x)$ be the (unique) odd positive integer such that $ x \in [a_1-1, a_1+1)$. Furthermore, if $x \geq a_1$, set $e_1=+1$, while if $x<a_1$ set $e_1=-1$. Now define the OCF Gauss map $T_o:(1,\infty)\rightarrow(1, \infty)$ as follows:
\begin{equation}\label{OCFGauss}
    T_o(x)= \frac{e_1}{x-a_1}.
\end{equation}
Setting $a_n=a_n(x)=a_1(T_o^{n-1}(x))$ and $e_n=e_n(x)=e_1(T_o^{n-1}(x))$, $n \geq 2$, we have that 
\begin{equation*}
    x=a_1+\cfrac{e_1}{a_2+\cfrac{e_2}{\cfrac{\ddots} {a_n+\cfrac{e_n}{T_o^{n}(x)}}}}.
\end{equation*}
In this way we get the OCF expansion of $x$ in (\ref{OCFexpansion}). The  map $T_o$ acts as a shift on the OCF digits:  $T_o([(a_1,e_1),(a_2,e_2),(a_3,e_3),\ldots]) = [(a_2,e_2),(a_3,e_3),\ldots]$. 

By Proposition \ref{proposition BM} it is obvious that a number $\omega>1$ is an O-reduced QI if and only if it is a periodic point of $T_o$. The probability measure 
\begin{equation}\label{mu_o on 1 infinity}
    \mu_o= \frac{1}{3 \log G} \cdot \bigg( \frac{1}{(G-1)x + 1}+ \frac{1}{(G+1)x-1} \bigg) \frac{dx}{x}
\end{equation}
is $T_o$-invariant. A simple calculation provides

\begin{equation}\label{integral a to infintiy of mu o}
    \int_{\alpha}^\infty d \mu_o(x) = \frac{1}{3 \log G} \log \bigg( \frac{G+1}{(G+1)\alpha -1} \cdot \frac{(G-1)\alpha +1}{G-1} \bigg).
\end{equation}

By conjugating $T_o$ by $x \mapsto \frac{1}{x}$, we get the customary OCF Gauss map on $[0,1]$ (see Figure \ref{Figure1}), whose properties were studied in \cite{Ri1, Ri2, Sc2, Sc}:

\begin{equation*}
    \overline{T}_o(x)= e\bigg( \frac{1}{x} -2k+1\bigg), \quad \text{where } x \in B(e,k):= \begin{cases} \big(\frac{1}{2k}, \frac{1}{2k-1} \big), & \text{if } e=1, \ k \geq 1 \\
    \big(\frac{1}{2k-1}, \frac{1}{2k-2} \big), & \text{if } e=-1, \ k \geq 2.
    \end{cases}
\end{equation*}
For the $\alpha$-OCF Gauss maps, defined similar to the maps in   \cite{Nak}, see \cite{BM2}.

The OCF-Gauss map $\overline{T}_o$ defined above has invariant probability measure 
\begin{equation*}
   \overline{\mu}_o= \frac{1}{3 \log G} \cdot \bigg( \frac{1}{G-1 + x}+ \frac{1}{G+1-x} \bigg) dx.
\end{equation*}

A natural extension of $T_o$ is given by the automorphism on $[1,\infty) \times [G-2,G]$ given by 
\begin{equation*}
    \widetilde{T}_o(u,v) = \bigg( T_o(u), \frac{e_1(u)}{a_1(u)+ v} \bigg) = \bigg( \frac{e_1(u)}{u-a_1(u)}, \frac{e_1(u)}{v+a_1(u)} \bigg),
\end{equation*}

with invariant probability measure $\widetilde{\mu}_o= \frac{1}{3 \log G} \cdot \frac{1}{(x+y)^2} \ dx dy$ (\cite{Sc}).

\begin{center}
\begin{figure}[h]
\includegraphics[scale=0.4]{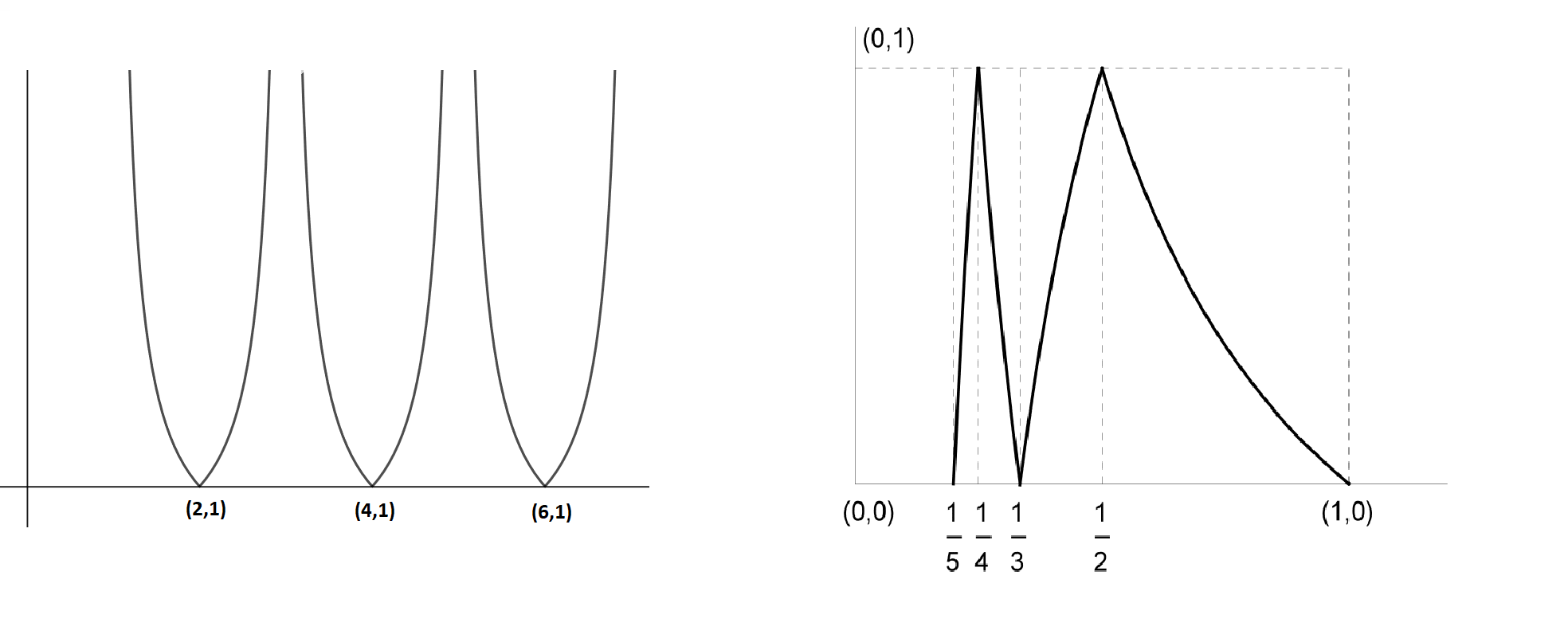} 
\caption{The graphs of the maps $T_o$ and $\overline{T}_o$. }\label{Figure1}
\end{figure}
\end{center}

The OCF convergents of $x$ satisfy
\begin{align}\label{recurrent relations}
  \nonumber   & p_0=1, \quad p_1=a_1, \quad p_n=a_np_{n-1} + e_{n-1}p_{n-2} \\
  \nonumber  & q_0=0, \quad q_1=1,   \quad q_n=a_nq_{n-1} + e_{n-1}q_{n-2},\\
  &  p_nq_{n-1}- q_np_{n-1} = (-e_0)(-e_1) \cdots(- e_{n-1}),  \text{ for all } n \geq 1, \text{ with } e_0=1.
\end{align}

 By (\ref{OCFGauss}) and the above recurrent relations for the convergents, we get that 
\begin{equation}\label{OCFnGauss}
    T^n_o(x)= 
    e_n  \frac{q_{n-1}x-p_{n-1}}{p_n-q_n x}.
\end{equation}

It is easy to see that $\frac{p}{q}$ is the $n$-th convergent of $x$ if and only if $\frac{q}{p}$ is the $n$-th convergent of $\frac{1}{x}$. 
\\

The recurrent relations show how to obtain the $n$-th convergents from the digit $a_n$, the sign $e_{n-1}$, and the previous convergents. The following Lemma shows how to obtain the digit $a_n$ and the sign $e_{n-1}$ from the convergents of level $n$ and $n-1$. Remark \ref{remark why lemma is needed} after Lemma \ref{lemmaP=S} below explains why this is needed to count the O-reduced QIs. 

\begin{lemma}\label{digitfromconverg}
Denote 
\begin{equation}\label{defn of sets F}
    \mathcal{F}_1  = \bigcup\limits_{k=0}^\infty (2k+g, \ 2k+1), \quad 
    \mathcal{F}_2  = \bigcup\limits_{k=1}^\infty [2k-1,2k), \quad \text{ and } \quad
    \mathcal{F}_3  = \bigcup\limits_{k=1}^\infty [2k, 2k+g).
\end{equation}

Let $x>1$, and $p_n,\ p_{n-1}$ be the numerators of the convergents of $x$ of level $n$ and $n-1$ respectively. For the digit $a_n$ and the sign $e_{n-1}$ of $x$ we have
\begin{equation*}
    a_n= \begin{cases} 
    \left \lfloor{\frac{p_n}{p_{n-1} }}\right \rfloor+1, & \text{ if }
p_n/p_{n-1} \in \mathcal{F}_1, \\[2ex]
\left \lfloor{\frac{p_n}{p_{n-1}}}\right \rfloor, & \text{ if } 
p_n/p_{n-1} \in \mathcal{F}_2, \\[2ex]
\left \lfloor{\frac{p_n}{p_{n-1} }}\right \rfloor-1, & \text{ if }
p_n/p_{n-1} \in \mathcal{F}_3,
    \end{cases}
    \quad \text{and } \quad e_{n-1}= \begin{cases}
     -1, & \text{ if } p_n/p_{n-1} \in \mathcal{F}_1, \\
    1, & \text{ if } p_n/p_{n-1} \in \mathcal{F}_2 \cup \mathcal{F}_3.
    \end{cases}
\end{equation*}

\end{lemma}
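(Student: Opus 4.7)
The plan is to invert the recurrence $p_n = a_n p_{n-1} + e_{n-1} p_{n-2}$ from \eqref{recurrent relations}: dividing by $p_{n-1}$ and setting $\rho_n := p_n/p_{n-1}$ (with the convention $e_0 = 1$, so $\rho_1 = a_1$), the recurrence becomes $\rho_n = a_n + e_{n-1}/\rho_{n-1}$. Recovering $(a_n, e_{n-1})$ from the single number $\rho_n$ thus reduces to showing that the perturbation $e_{n-1}/\rho_{n-1}$ is small enough in absolute value to place $\rho_n$ in exactly one of the pairwise disjoint regions $\mathcal{F}_1, \mathcal{F}_2, \mathcal{F}_3$, the three cases corresponding to different admissible pairs $(a_n, e_{n-1})$.

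The main step is to establish, by simultaneous induction on $n \geq 1$, the two bounds: (i) $\rho_n > g$ for every $n$; and (ii) if $e_n = -1$, then $\rho_n > G+1$. The base case $n = 1$ is immediate from the OCF admissibility condition $a_1 + e_1 \geq 2$, which forces $a_1 \geq 3 > G+1$ whenever $e_1 = -1$. For the inductive step of (ii), $e_n = -1$ again forces $a_n \geq 3$; if $e_{n-1} = +1$ then $\rho_n \geq a_n \geq 3 > G+1$, while if $e_{n-1} = -1$ the inductive hypothesis applied at $n-1$ gives $\rho_{n-1} > G+1$, hence $1/\rho_{n-1} < 2 - G$ (using the identity $1/(G+1) = 2-G$), and so $\rho_n > 3 - (2-G) = G+1$. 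The inductive step of (i) is analogous: the only non-trivial subcase $e_{n-1} = -1$ is handled by the same bound $1/\rho_{n-1} < 2-G$, which yields $\rho_n > a_n - (2-G) \geq 1 - (2-G) = G - 1 = g$.

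With (i) and (ii) in hand, the conclusion is bookkeeping. If $e_{n-1} = +1$, then $1/\rho_{n-1} < G = 1+g$, so writing the odd integer $a_n$ as $2k-1$ places $\rho_n$ in $(2k-1,\, 2k+g)$, which is precisely the union of the relevant components of $\mathcal{F}_2$ and $\mathcal{F}_3$; the two subcases split according to whether $\rho_n < 2k$ or $\rho_n \geq 2k$, producing $a_n = \lfloor \rho_n \rfloor$ or $\lfloor \rho_n \rfloor - 1$, respectively. If $e_{n-1} = -1$, then $1/\rho_{n-1} < 2 - G = 1 - g$, so writing $a_n = 2k+1$ places $\rho_n$ in $(2k+g,\, 2k+1)$, the relevant component of $\mathcal{F}_1$, and $a_n = \lfloor \rho_n \rfloor + 1$. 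Pairwise disjointness of $\mathcal{F}_1, \mathcal{F}_2, \mathcal{F}_3$ turns this localization into a well-defined inversion matching the statement of the lemma.

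The delicate step I expect to need the most care is the coupling of the two inductive claims: the strengthened bound $\rho_n > G+1$ is needed precisely so that one step later the perturbation $|e_{n-1}/\rho_{n-1}|$ is bounded by $1-g$ rather than merely by $G$, which is what cleanly separates $\mathcal{F}_1$ from the other two regions. The arithmetic closes exactly because $G = 1+g$ and $1/(G+1) = 2-G$.
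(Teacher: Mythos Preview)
Your proof is correct and follows essentially the same approach as the paper. Both arguments hinge on the two bounds $\rho_n>g$ (always) and $\rho_n>G+1$ (when $e_n=-1$), and then reduce the recovery of $(a_n,e_{n-1})$ from $\rho_n=p_n/p_{n-1}$ to casework; the paper defers those bounds to Appendix~3 (via a continued-fraction estimate), while you prove them directly by a clean simultaneous induction, and your case split on $e_{n-1}$ is a slight reorganisation of the paper's ``unique odd integer in an interval of length $2$'' argument.
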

\begin{proof}By Lemma \ref{lemma bounds on ratio of convergents} in Appendix 3 we have that for all $n\geq 1$
\begin{equation}\label{claim}
    -g^2 < \frac{e_n p_{n-1}}{p_n}< G.
\end{equation}

Solving
\begin{align*}\label{eq from recurrent relations}
    \frac{p_n}{p_{n-1}}= a_n+\cfrac{e_{n-1}p_{n-1}}{p_{n-1}}
\end{align*}
for $a_n$ and using (\ref{claim}) we get that
\begin{equation}\label{digitinequal}
    \frac{p_n}{p_{n-1}}-G < a_n < \frac{p_n}{p_{n-1}} +2-G.
\end{equation}
Observe that this implies that $\frac{p_n}{p_{n-1}}>g$ for all $n \geq 1$. The interval $\mathcal{J}:=\big(\frac{p_n}{p_{n-1}}-G,\ \frac{p_n}{p_{n-1}} +2-G \big)$ has length 2 and therefore contains exactly two integers, one even and one odd. The digit $a_n$ is equal to the odd integer. 
\begin{itemize}
    \item If $\left \lfloor{\frac{p_n}{p_{n-1}}}\right \rfloor = 2k-1, \ k=1,2, \ldots$, then $1<G<2$ gives
    \begin{equation*}
         \frac{p_n}{p_{n-1}}-G < 2k-1 < \frac{p_n}{p_{n-1}} +2-G,
    \end{equation*}
    so that $a_n= 2k-1$.
    \item If $\left \lfloor{\frac{p_n}{p_{n-1}}}\right \rfloor = 2k, \ k=0, 1,2, \ldots$, we similarly get 
     \begin{equation*}
         \frac{p_n}{p_{n-1}}-G < 2k < \frac{p_n}{p_{n-1}} +2-G,
    \end{equation*}
    and the odd integer in $\mathcal{J}$ is either $2k-1$ or $2k+1$, depending on $\big\{\frac{p_n}{p_{n-1}}\big\}$:
    
    \begin{align*}
        2k-1 \in \mathcal{J} & \iff  \frac{p_n}{p_{n-1}}-G < 2k-1   \iff \bigg\{\frac{p_n}{p_{n-1}} \bigg \} <g \\
      2k+1 \in \mathcal{J} & \iff   2k+1< \frac{p_n}{p_{n-1}} +2-G  \iff \bigg \{\frac{p_n}{p_{n-1}} \bigg \}>g.
    \end{align*}
\end{itemize}
For the sign $e_{n-1}$, the  recursive relations for the convergents give 
\begin{equation}\label{signrecurs}
  \frac{p_n}{p_{n-1}}= a_n +  \frac{e_{n-1}}{p_{n-1}/p_{n-2}}.
\end{equation}
Combine this with (\ref{digitinequal}) to get  
\begin{equation*}
    G-2<\frac{e_{n-1}}{p_{n-1}/p_{n-2}}<G.
\end{equation*}
Therefore
\begin{equation*}
    \begin{split}
       & e_{n-1}=-1 \Leftrightarrow \frac{e_{n-1}}{p_{n-1}/p_{n-2}} \in (G-2,0) \Leftrightarrow  \frac{p_n}{p_{n-1}}- a_n \in (G-2,0) \Leftrightarrow \frac{p_n}{p_{n-1}} \in (a_n+G-2,a_n) \Leftrightarrow \\ 
       & \frac{p_n}{p_{n-1}} \in (2k-1+G-2,2k-1), \ k=1,2,\ldots \Leftrightarrow \frac{p_n}{p_{n-1}} \in (2k+g,2k-1), \ k=1,2,\ldots
    \end{split}
\end{equation*}
Similarly we get the case $e_{n-1}=1$.
\end{proof}

Combining the above with (\ref{signrecurs}), an easy calculation gives
\begin{equation}\label{ratiopn}
    \frac{p_{n-1}}{p_{n-2}} \in  
    \begin{cases}
    (2+g, \infty), & \text{ if } p_{n}/p_{n-1} \in \mathcal{F}_1\\
    (1, \infty), & \text{ if } p_{n}/p_{n-1} \in \mathcal{F}_2\\
    (g, 1], & \text{ if } p_{n}/p_{n-1} \in \mathcal{F}_3.\\
    \end{cases}
\end{equation}
\section{Parametrization of the O-reduced quadratic irrationals}\label{section parametrization of QIs}

\subsection{The correspondence between the O-reduced QIs, words, and matrices}\label{subsection 3.1} \mbox{}

Consider the O-reduced QI $\omega = [\, \overline{(a_1,e_1),(a_2,e_2), \ldots, (a_{n_0},e_{n_0})} \,]$, where $n_0$ is the least period of $\omega$, with multiplicity $k \geq 1$. Each pair $(\omega,k)$ gives rise to a unique word 
\begin{equation}\label{eq omega_k}
   w_{(\omega,k)}=  (a_1,e_1)(a_2,e_2) \ldots (a_n,e_n), \qquad   n=kn_0,
\end{equation}
 on the alphabet $(2\mathbb{N}-1) \times \{\pm1\}$, with $a_i+e_i \geq 2$, $i=1,2,\ldots,n$. The words of this form are, in turn, into one-to-one correspondence with the matrices in the set 

\begin{equation}\label{defn of set P}
     \mathcal{P} :=  \bigg\{  \left( \begin{matrix} a_1 & e_1 \\ 1 & 0 \end{matrix}\right)  \left( \begin{matrix} a_2 & e_2 \\ 1 & 0 \end{matrix}\right) \cdots \left( \begin{matrix} a_n & e_n \\ 1 & 0 \end{matrix}\right) \Big | \ n \geq 1, \ a_i \in 2\mathbb{N}-1, \ e_{i} = \pm 1, \ a_i + e_i \geq 2 \bigg\},
\end{equation}
via the map 
\begin{equation}\label{correspondence of words and matrices}
   w= (a_1,e_1)(a_2,e_2) \ldots (a_n,e_n) \mapsto M(w):= \left( \begin{matrix} a_1 & e_1 \\ 1 & 0 \end{matrix}\right)  \left( \begin{matrix} a_2 & e_2 \\ 1 & 0 \end{matrix}\right) \cdots \left( \begin{matrix} a_n & e_n \\ 1 & 0 \end{matrix}\right).
\end{equation} 
Observe that $\widetilde{\Omega}(\omega) \in \mathcal{P}$ (cf. (\ref{Omega tilde defn})). Now, the recurrence relations of the convergents give that

\begin{equation*}\label{product of building blocks}
    \left( \begin{matrix} a_1 & e_1 \\ 1 & 0 \end{matrix}\right)  \left( \begin{matrix} a_2 & e_2 \\ 1 & 0 \end{matrix}\right) \cdots \left( \begin{matrix} a_n & e_n \\ 1 & 0 \end{matrix}\right) = \left( \begin{matrix} p_n & e_n p_{n-1} \\ q_n & e_nq_{n-1} \end{matrix}\right).
\end{equation*}

This enables us to explicitly describe the set $\mathcal{P}$:

Denote
\begin{align}\label{eq building block}
 J & = \left( \begin{matrix} 0 & 1 \\ 1 & 0 \end{matrix}\right), \qquad
 M(a,e) =  \left( \begin{matrix} a & 1 \\ e & 0 \end{matrix}\right) = J\left( \begin{matrix} 0 & e \\ 1 & a \end{matrix}\right)J, \qquad \text{ and }\\
\nonumber &\\
\nonumber \widetilde{\Gamma} & = \{\sigma \in \text{GL}(2,\mathbb{Z}) \mid \sigma \equiv I, A \text{ or } B \mod 2\},
\end{align}
\text{ where $A$ and $B$ are defined in (\ref{eq defn of A B matrices}).}
Observe that $J \notin \widetilde{\Gamma}$, but $J\widetilde{\Gamma}J = \widetilde{\Gamma}$.

\begin{lemma}\label{lemma P= S+ union S-} Setting 
\begin{align*}
    \mathcal{S}_{+1} & =  \bigg\{ \left( \begin{matrix} a & b  \\ c &  d \end{matrix}\right) \in \widetilde{\Gamma} \mid 0 \leq d \leq b, \ 1 \leq c \leq a, \ a/b >g \bigg\} ,\\
     \mathcal{S}_{-1} & =  \bigg\{ \left( \begin{matrix} a & -b  \\ c &  -d \end{matrix}\right) \in \widetilde{\Gamma} \mid 0 \leq d \leq b, \ 1 \leq c \leq a, \ a/b > G+1 \bigg\},
\end{align*}
one has that 
\begin{equation*}
  \mathcal{P}   = \mathcal{S}_{+1} \cup \mathcal{S}_{-1}.
\end{equation*} 
\end{lemma}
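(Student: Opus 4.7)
The proof is a two-sided inclusion driven by the product identity
\[
M(w) = \begin{pmatrix} p_n & e_n p_{n-1} \\ q_n & e_n q_{n-1} \end{pmatrix}
\]
noted just before the lemma, so that the sign $e_n$ in the second column partitions $\mathcal{P}$ into the two pieces matching $\mathcal{S}_{+1}$ and $\mathcal{S}_{-1}$. For $\mathcal{P} \subseteq \mathcal{S}_{+1} \cup \mathcal{S}_{-1}$, I would induct on the block length $n$: the inequalities $0 \leq q_{n-1} \leq p_{n-1}$ and $1 \leq q_n \leq p_n$ propagate from $(p_0, q_0) = (1, 0)$ via (\ref{recurrent relations}) (standard OCF facts, cf.\ Appendix~3); the parity holds because each factor $M(a_i, e_i) \equiv B \pmod 2$ (as $a_i$ is odd) and $\{I, A, B\} = \langle B \rangle$ is a cyclic subgroup of $SL(2, \mathbb{F}_2)$ of order $3$ (a direct $3 \times 3$ check); and the ratio bound is read off (\ref{claim}): the upper estimate $p_{n-1}/p_n < G$ gives $p_n/p_{n-1} > g$ when $e_n = +1$, while the lower estimate $-p_{n-1}/p_n > -g^2$ gives $p_n/p_{n-1} > 1/g^2 = G + 1$ when $e_n = -1$.

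For $\mathcal{S}_{+1} \cup \mathcal{S}_{-1} \subseteq \mathcal{P}$, I would induct on the sum $a + b + c + d$. In the base case $d = 0$, the equation $|ad - bc| = 1$ forces $bc = 1$, so $b = c = 1$ and $M = \begin{pmatrix} a & \epsilon \\ 1 & 0 \end{pmatrix} = M(a, \epsilon)$; the parity condition forces $a$ odd, and the ratio bound $a > g$ (respectively $a > G + 1$) gives $a \geq 1$ (respectively $a \geq 3$), hence $a + \epsilon \geq 2$ and $M \in \mathcal{P}$. In the inductive step $d \geq 1$, apply Lemma \ref{digitfromconverg} to $a/b$ to extract the unique odd integer $a_n$ and sign $e_{n-1}$, and form $M' := M \cdot M(a_n, \epsilon)^{-1} = \begin{pmatrix} b & a - a_n b \\ d & c - a_n d \end{pmatrix}$; once $M' \in \mathcal{S}_{e_{n-1}}$ is verified (which in particular gives strict decrease of the sum), induction yields $M' \in \mathcal{P}$, hence $M = M' \cdot M(a_n, \epsilon) \in \mathcal{P}$.

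The main obstacle is the verification $M' \in \mathcal{S}_{e_{n-1}}$. Closure of $\widetilde{\Gamma}$ under multiplication and inversion (the mod-$2$ table on $\{I, A, B\}$ is a group) handles parity. The ratio condition on $b/|a - a_n b|$ is exactly (\ref{ratiopn}) applied to $p_n/p_{n-1} = a/b$, which was derived algebraically from (\ref{signrecurs}) and (\ref{digitinequal}). The delicate part is the pair of inequalities $0 \leq e_{n-1}(c - a_n d) \leq e_{n-1}(a - a_n b)$: writing $c/d = a/b + \theta/(bd)$ with $\theta = \pm 1$ (from $|ad - bc| = 1$) keeps $c/d$ close to $a/b$, so the digit $a_n$ and sign $e_{n-1}$ extracted from $a/b$ determine the correct sign and magnitude of $c - a_n d$; the upper comparison then follows from $d \leq b$. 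The degenerate case $a - a_n b = 0$ would require $a/b$ to be an odd positive integer, a configuration excluded by the parity structure of $\widetilde{\Gamma}$ whenever $d \geq 1$. A short case split by $\mathcal{F}_1, \mathcal{F}_2, \mathcal{F}_3$ confirms both this verification and the strict decrease of $a + b + c + d$.
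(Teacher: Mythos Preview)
Your approach is essentially the paper's: the paper first normalizes the sign $e_n$ via (\ref{eq product of M(a,e)}), reducing to an auxiliary identity $\widetilde{\mathcal{P}}=\mathcal{S}$ with all-positive entries (Lemma~\ref{lemmaP=S}), and then runs exactly the digit-extraction induction you describe, using $\min\{a,b\}$ rather than $a+b+c+d$ as the descent variable. The key ingredients---Lemma~\ref{digitfromconverg} to read off $(a_n,e_{n-1})$ from $a/b$, relation (\ref{ratiopn}) for the new ratio bound on $M'$, and the mod-$2$ cyclic group $\{I,A,B\}=\langle B\rangle$ for parity---are identical.
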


\begin{proof}
Observe that for any matrix $ \left( \begin{smallmatrix} p_n & e_n p_{n-1} \\ q_n & e_nq_{n-1} \end{smallmatrix}\right) \in \mathcal{P}$ we have 
\begin{equation}\label{eq product of M(a,e)}
     \left( \begin{matrix} p_n & e_n p_{n-1} \\ q_n & e_nq_{n-1} \end{matrix}\right)  \left( \begin{matrix} 1 & 0 \\ 0 & e_n \end{matrix}\right) = \left( \begin{matrix} p_n & p_{n-1} \\ q_n & q_{n-1} \end{matrix}\right) =  M(a_1,e_0) M(a_2,e_1)\cdots \cdot M(a_n,e_{n-1}),
\end{equation}
where $M(a,e)$ as in (\ref{eq building block}) and $e_0=1$. The last equality above follows from the recurrent relations for the convergents. Therefore we will first parametrize the set 
\begin{align}\label{def of P tilde}
    \widetilde{\mathcal{P}} & = \big\{M(a_1,e_0) M(a_2,e_1)\cdots \cdot M(a_n,e_{n-1})  \mid n \geq 1,\ a_i \in 2\mathbb{N}-1,\ e_i = \pm 1,\ a_i+e_i \geq 2 \big\}. 
    \end{align}
This is done independently in Lemma \ref{lemmaP=S} below. Then combine Lemma \ref{lemmaP=S}   with (\ref{ratiopn}) to complete the proof.
\end{proof}

\begin{remark}\label{remark positive trace}
 Observe that the trace of any matrix in $\mathcal{P}$ is positive. Actually, we have  
 \begin{equation*}
     \Tr ( \widetilde{\Omega}(\omega)^k) \geq 3, \quad \text{ for every } \omega \in \R_O \text{ and every }k \geq 1,
 \end{equation*}
 with minimum attained for $\widetilde{\Omega}(\omega)= \left( \begin{smallmatrix} 1 & 1 \\ 1 & 0 \end{smallmatrix}\right)^2=\left( \begin{smallmatrix} 2 & 1 \\ 0 & 1 \end{smallmatrix}\right)$ and $\widetilde{\Omega}(\omega)= \left( \begin{smallmatrix} 3 & -1 \\ 1 & 0 \end{smallmatrix}\right)$. This shows that $\mathfrak{r}(\widetilde{\Omega}(\omega)) \geq 1+G$, for all $\omega \in \R_O$.\end{remark}


\begin{lemma}\label{lemmaP=S} Setting 
 \begin{align}\label{setSdefined}
   \mathcal{S} & = \Big\{\sigma = \left( \begin{matrix} a & b \\ c & d \end{matrix}\right) \in \widetilde{\Gamma} \mid 0
    \leq d \leq b, \ 1 \leq c \leq a ,\ a/b >g \Big\}, 
 \end{align}
 and with $ \widetilde{\mathcal{P}}$ as in  (\ref{def of P tilde}), we have that  $ \widetilde{\mathcal{P}} = \mathcal{S}$.
\end{lemma}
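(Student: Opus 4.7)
The plan is to prove $\widetilde{\mathcal{P}}=\mathcal{S}$ by establishing both inclusions. For $\widetilde{\mathcal{P}}\subseteq\mathcal{S}$, I would take a product $\sigma=M(a_1,e_0)\cdots M(a_n,e_{n-1})\in\widetilde{\mathcal{P}}$, use the recurrences (\ref{recurrent relations}) to identify $\sigma=\bigl(\begin{smallmatrix}p_n & p_{n-1}\\ q_n & q_{n-1}\end{smallmatrix}\bigr)$, and verify the four defining conditions of $\mathcal{S}$ in turn. Membership in $\widetilde{\Gamma}$ is immediate because each building block is $\equiv B\pmod 2$ (as $a_i$ is odd and $e_{i-1}\equiv 1\pmod 2$), so $\sigma\equiv B^n\pmod 2$, and the cycle $B\equiv B$, $B^2\equiv A$, $B^3\equiv I\pmod 2$ keeps every power of $B$ inside the subgroup $\widetilde{\Gamma}$. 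The inequalities $1\le q_n\le p_n$ and $0\le q_{n-1}\le p_{n-1}$ follow by a direct induction on $n$ from (\ref{recurrent relations}), using the constraint $a_i+e_i\ge 2$ (which forces $a_i\ge 3$ whenever $e_i=-1$) to preclude cancellation. The ratio condition $p_n/p_{n-1}>g$ is a consequence of Lemma \ref{lemma bounds on ratio of convergents} from Appendix 3, which yields $p_{n-1}/p_n<G$ and hence $p_n/p_{n-1}>1/G=g$.

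For the reverse inclusion $\mathcal{S}\subseteq\widetilde{\mathcal{P}}$, I would argue by strong induction on the positive integer $a+b$. The base case is $d=0$: the determinant identity $-bc=\pm 1$ with $c\ge 1$ and $b\ge 0$ forces $b=c=1$, and the $\widetilde{\Gamma}$ condition then forces $a$ to be odd (otherwise $\sigma\equiv J\pmod 2$, which is not in $\widetilde{\Gamma}$), so $\sigma=M(a,1)\in\widetilde{\mathcal{P}}$. For the inductive step $d\ge 1$, I would apply Lemma \ref{digitfromconverg} to the rational number $a/b\in(g,\infty)$ to read off a unique odd positive integer $a_n$ and a sign $e_{n-1}\in\{\pm 1\}$, and set $\sigma':=\sigma\cdot M(a_n,e_{n-1})^{-1}=\bigl(\begin{smallmatrix}b & e_{n-1}(a-a_nb)\\ d & e_{n-1}(c-a_nd)\end{smallmatrix}\bigr)$. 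Once I show $\sigma'\in\mathcal{S}$ and $a'+b'<a+b$, the inductive hypothesis yields $\sigma'\in\widetilde{\mathcal{P}}$, whence $\sigma=\sigma'\cdot M(a_n,e_{n-1})\in\widetilde{\mathcal{P}}$. Membership $\sigma'\in\widetilde{\Gamma}$ is immediate from the subgroup closure of $\widetilde{\Gamma}$; the ratio condition $a'/b'>g$ for $\sigma'$ follows from (\ref{ratiopn}) by case analysis on the region $\mathcal{F}_i$ containing $a/b$; and the strict decrease $a'+b'<a+b$ is a short one-line computation in each of the three cases $\mathcal{F}_1,\mathcal{F}_2,\mathcal{F}_3$.

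The main obstacle lies in verifying the ordering conditions $0\le d'\le b'$ and $1\le c'\le a'$ for the reduced matrix $\sigma'$. These must be deduced in each of the three cases of Lemma \ref{digitfromconverg} from the sharp bounds $a-a_nb\in(-g^2b,0)$, $[0,b)$, or $[b,bG)$ according to whether $a/b$ lies in $\mathcal{F}_1$, $\mathcal{F}_2$, or $\mathcal{F}_3$, combined with the determinant identity $ad-bc=\pm 1$ to control the new lower-row entries. A subtle point is that the purely algebraic bounds would admit borderline configurations with $b=1$ and $d=1$ in which $d'$ could equal $-1$; the parity condition $\sigma\equiv I,A,\text{or }B\pmod 2$ is precisely what rules these pathological cases out, so the modular constraint enters the argument crucially. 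Once these finitely many subcases are dispatched, the induction closes and the lemma follows.
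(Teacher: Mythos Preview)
Your overall strategy mirrors the paper's: verify $\widetilde{\mathcal{P}}\subseteq\mathcal{S}$ directly from the convergent recurrences and Appendix~3, and for $\mathcal{S}\subseteq\widetilde{\mathcal{P}}$ peel off one factor at a time using the digit and sign extracted by Lemma~\ref{digitfromconverg}. The paper iterates with $\min\{a,b\}$ as the termination measure (showing it can stagnate for at most one step, via (\ref{ratiopn})), while you induct on $a+b$; both work, and your choice has the virtue of a strict decrease at every step.

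There is, however, one genuine gap in the inductive step. From $\sigma'\in\widetilde{\mathcal{P}}$ you conclude $\sigma=\sigma'\cdot M(a_n,e_{n-1})\in\widetilde{\mathcal{P}}$, but $\widetilde{\mathcal{P}}$ is \emph{not} closed under right-multiplication by an arbitrary block $M(a,e)$. If $\sigma'=M(a_1,1)\cdots M(a_{n-1},e_{n-2})$ is a valid decomposition, then appending $M(a_n,e_{n-1})$ introduces the new admissibility constraint $a_{n-1}+e_{n-1}\ge 2$, which is nontrivial exactly when $e_{n-1}=-1$. The paper checks this explicitly (the clause ``$k_{\sigma_0}+e_\sigma\ge 2$'' in its Step~2): when $e_{n-1}=-1$ one has $a/b\in\mathcal{F}_1$, and then (\ref{ratiopn}) forces $a'/b'>2+g$, so the digit Lemma~\ref{digitfromconverg} extracts from $\sigma'$ is at least $3$. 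To make this work inside your induction you must strengthen the hypothesis to also record that the last digit of $\sigma'$'s decomposition is the one read off from $a'/b'$ by Lemma~\ref{digitfromconverg} (equivalently, that the decomposition is unique), or else unwind the reduction iteratively as the paper does and verify all the compatibility constraints at the end. As written, the inference ``whence $\sigma\in\widetilde{\mathcal{P}}$'' does not follow from the bare hypothesis $\sigma'\in\widetilde{\mathcal{P}}$.
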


\begin{proof} To prove 
$\widetilde{\mathcal{P}} \subset \mathcal{S} $, let $M=M(a_1,e_0) M(a_2,e_1)\cdots  M(a_n,e_{n-1})=\left( \begin{matrix} p_n & p_{n-1} \\ q_n & q_{n-1} \end{matrix}\right) \in \widetilde{\mathcal{P}}$. Then det$M= \pm 1$ by (\ref{recurrent relations}) and $\frac{p_n}{p_{n-1}}>g$ by the proof of Lemma \ref{digitfromconverg}. 
By (\ref{eq p_n greater that q_n}) we have $p_n \geq q_n$, and since $M$ is a product of matrices  $ M(a_i,e_i) \equiv B \mod 2$ (cf. (\ref{eq defn of A B matrices})), and $B^2 \equiv A \mod 2$, $B^3 \equiv I \mod 2$, we have $M \equiv I, A \text{ or } B \mod 2$. Finally, take $x = [(a_1,e_1),\ldots, (a_n, e_n), (a_{n+1},e_{n+1}), \ldots]$. By (\ref{OCFnGauss}) and (\ref{eq product of M(a,e)}) we get that $M^{-1}(x)= e_n  T^n (x) \in (-\infty,-1)\cup(1,\infty)$.  Therefore $M \in \mathcal{S}$.
\\
To prove $\mathcal{S} \subset \widetilde{\mathcal{P}}$, let $\sigma = \left( \begin{matrix} a & b \\ c & d \end{matrix}\right) \in \mathcal{S}$.\\
Step 1: If min$\{a,b\}=1$, then the columns of $\sigma \in \mathcal{S}$ are successive convergents of a number $x>1$:
\begin{itemize}
    \item If $1=a \leq b$, then $\frac{a}{b} > g$ gives that $b =1$ and $1 \leq c \leq a$ gives that $c=1$. The determinant condition then gives that $d=0$, and thus $\sigma = B$ (cf. (\ref{eq defn of A B matrices})). Therefore its columns give the fractions $\frac{1}{0}$ and $\frac{1}{1}$, which are successive convergents for any $x=[(1,1),(a_2,e_2),\ldots]>1$. Here $\sigma$ is of the form $\left( \begin{matrix} a & 1 \\ 1 & 0 \end{matrix}\right)$, with $a \geq 1$ odd.  
    
    \item If $1=b<a$, then $d= 0$ or $1$. If $d=0$, then the determinant condition gives that $c=1$. Therefore $\sigma = \left( \begin{matrix} a & 1 \\ 1 & 0 \end{matrix}\right)$s with $a \geq 3$ odd, since $\sigma \in  \widetilde{\Gamma}$. The columns of this matrix are successive convergents for any $x=[(a,e_1), (a_2,e_2), \ldots]>1$. If $d=1$, then the determinant condition gives $0 \leq a-c=1$, so $\sigma = \left( \begin{matrix} c+1 & 1 \\ c & 1 \end{matrix}\right)$, $c$ odd because of the congruence conditions. The columns of this matrix are successive convergents for $x=[(1,1), (c,e_2),(a_3,e_3), \ldots]>1$. Observe that $\left( \begin{matrix} c+1 & 1 \\ c & 1 \end{matrix}\right)= \left( \begin{matrix} 1 & 1 \\ 1 & 0 \end{matrix}\right)  \left( \begin{matrix} c & 1 \\ 1 & 0 \end{matrix}\right)$, so that $\sigma $ is either of the form $\left( \begin{matrix} a & 1 \\ 1 & 0 \end{matrix}\right)$ (when $d=0$), with $a \geq 3$ odd, or of the form $\left( \begin{matrix} a & 1 \\ 1 & 0 \end{matrix}\right)  \left( \begin{matrix} c & 1 \\ 1 & 0 \end{matrix}\right)$ (when $d=1$), with $a$ odd.
\end{itemize}

Step 2: If min$\{a,b\}>1$, then there is a matrix $M(k_{\sigma}, e_{\sigma})=\left( \begin{matrix} k_{\sigma} & 1 \\ e_{\sigma} & 0 \end{matrix}\right)$ such that $k_{\sigma} \in 2\mathbb{N}-1$, $ \sigma_0 = \left( \begin{matrix} a_0 & b_0 \\ c_0 & d_0 \end{matrix}\right) :=\sigma  M(k_{\sigma}, e_{\sigma})^{-1} \in \mathcal{S}$, \ $k_{\sigma_{0}}+ e_{\sigma}\geq 2$, and $\text{min}\{a_0,b_0\} \leq \text{min}\{a,b\}$. To see this, recall (\ref{defn of sets F}) and define

\begin{equation*}
    k_\sigma= \begin{cases} 
\left \lfloor{\frac{a}{b }}\right \rfloor+1, & \text{ if }
a/b \in \mathcal{F}_1, \\[1ex]
\left \lfloor{\frac{a}{b}}\right \rfloor, & \text{ if } 
a/b \in \mathcal{F}_2,\\[1ex]
\left \lfloor{\frac{a}{b }}\right \rfloor-1, & \text{ if }
a/b \in \mathcal{F}_3,
    \end{cases} \quad \text{and } \quad  e_\sigma= \begin{cases}
     -1, & \text{ if } a/b \in \mathcal{F}_1, \\
     1, & \text{ if } a/b \in \mathcal{F}_2 \cup \mathcal{F}_3 .
    \end{cases}
\end{equation*}
\\[2ex]
Then $\sigma_0 = \left( \begin{matrix} a_0 & b_0 \\ c_0 & d_0 \end{matrix}\right)= \left( \begin{matrix} b & ae_\sigma-e_\sigma k_\sigma b \\ d & ce_\sigma-e_\sigma k_\sigma d \end{matrix}\right)$. A simple calculation shows that
\begin{equation*}\label{b_0>0}
    b_0=  ae_\sigma-e_\sigma k_\sigma b = \begin{cases}
  ( 1- \{\frac{a}{b}\}) \cdot b, & \text{ if } a/b \in \mathcal{F}_1 \\
  \{\frac{a}{b}\} \cdot b, & \text{ if } a/b \in \mathcal{F}_2\\
  (1+ \{\frac{a}{b}\}) \cdot b, & \text{ if } a/b \in \mathcal{F}_3
    \end{cases}\quad , \qquad   \text{ so that } b_0 > 0.
\end{equation*}
To show that $\sigma_0 \in \mathcal{S}$, first note that det$(\sigma_0) = \pm 1$. For the inequality $1\leq c_0 \leq a_0$, observe that $0 \leq c_0 \leq a_0$ by definition. If $c_0=d=0$, then the determinant condition for $\sigma$ gives that $b=c=1$, and in particular $\text{min}\{a,b\}=1$, a contradiction. Therefore $c_0= d \geq 1$. For the inequality $0 \leq d_0 \leq b_0$ consider the cases:
\begin{itemize}
    \item The case $d_0 <0$ contradicts the determinant condition for $\sigma_0$, since $a_0, b_0, c_0 > 0 $.
    \item If $d_0=0$, then the determinant condition for $\sigma_0$ combined with $c_0, b_0 \geq 0$ gives that $c_0=b_0=1$. Then $\sigma_0 = \left( \begin{matrix} b & 1 \\ 1 & 0 \end{matrix}\right)$, which satisfies $0 \leq d_0 \leq b_0$.
    \item If $d_0>0$, then the determinant condition for $\sigma_0$ gives 
    \begin{equation*}
        \frac{d_0}{b_0}= \frac{d}{b} \pm \frac{1}{bb_0},
    \end{equation*}
    so that $d_0/b_0 \leq 1$ is equivalent to $(b-d)b_0 \geq 1$, which is true whenever $b>d$. When $b=d$, the determinant condition for $\sigma$ gives that $b=d=1$, and that $\sigma = \left( \begin{matrix} c+1 & 1 \\ c & 1 \end{matrix}\right)= \left( \begin{matrix} 1 & 1 \\ 1 & 0 \end{matrix}\right) \left( \begin{matrix} c & 1 \\ 1 & 0 \end{matrix}\right) $. In particular, $0 \leq d_0 \leq b_0$ holds for $\sigma_0= \left( \begin{matrix} 1 & 1 \\ 1 & 0 \end{matrix}\right) $.
\end{itemize}
By the same calculation as in the proof of (\ref{ratiopn}), we get that $\frac{a_0}{b_0}>g$. Since $M(k_\sigma, e_\sigma) \equiv B \mod 2$, it follows that $\sigma_0 \equiv I, A $ or $B \mod 2$. To see that $k_{\sigma_0}+ e_\sigma \geq 2$, observe that by definition $e_\sigma =-1 \Leftrightarrow \frac{a}{b} \in \mathcal{F}_1$, and therefore (\ref{ratiopn}) gives that $\frac{a_0}{b_0}> 2+g$, so that $k_{\sigma_0} \geq 3$. To prove that min$\{a_0,b_0\} \leq \text{min}\{a,b\}$, consider the following cases:
\begin{itemize}
    \item If $\frac{a}{b} \in (g,1]$, then $0 < b_0< a<b $, so that min$\{a_0,b_0\} < \text{min}\{a,b\}$.
    \item If $\frac{a}{b}  \in (\mathcal{F}_1 \cup \mathcal{F}_2) \setminus (g,1]$, then $0<b_0<b<a$, so that min$\{a_0,b_0\} < \text{min}\{a,b\}$.
    \item If $\frac{a}{b}  \in \mathcal{F}_3$, then $b<a$ and $b_0 \geq b$, so that  min$\{a_0,b_0\} = \text{min}\{a,b\}$.
\end{itemize}
This concludes the proof that $\sigma_0 \in \mathcal{S}$.

Repeat this process successively on $\sigma_0,\sigma_1, \ldots$, until the minimum of the entries in the first row is equal to 1.  Observe that the process ends in finitely many steps: the situation $0 \leq \text{min}\{a_i,b_i\}=\text{min}\{a_{i-1}, b_{i-1}\}$ happens precisely when  $\frac{a_{i-1}}{b_{i-1}}  \in \mathcal{F}_3$. By (\ref{ratiopn}) it follows then that $\frac{a_i}{b_i}  \in (g,1]$, and therefore min$\{a_{i+1},b_{i+1}\} < $ min$\{a_{i}, b_{i}\}$. 

By Step 1, we get that $\sigma$ can be \emph{uniquely} expressed in the following way:
\begin{equation}\label{sigmaproduct}
 \sigma =  \left( \begin{matrix} k_{\sigma_n} & 1 \\ 1 & 0 \end{matrix}\right) \left( \begin{matrix} k_{\sigma_{n-1}} & 1 \\ e_{n-1} & 0 \end{matrix}\right) \cdots \left( \begin{matrix} k_{\sigma_0} & 1 \\ e_{\sigma_0} & 0 \end{matrix}\right)  \left( \begin{matrix} k_{\sigma} & 1 \\ e_\sigma & 0 \end{matrix}\right),
\end{equation}
where $ k_{\sigma_i} + e_{\sigma_{i-1}} \geq 2$, $\forall i =0,1,\ldots n$, and $\sigma_{-1} :=\sigma$. 

Step 3: For $x>1$, set  $t_{\sigma,x}:= \sigma^{-1}(x)$. Then $t_{\sigma,x} >1 $ or $t_{\sigma,x}<-1$ correspond to the situations $e_{\sigma+1}=1$ and $e_{\sigma+1}=-1$ respectively. Using $x= \sigma (t_{\sigma,x})$ and  (\ref{sigmaproduct}), we get that $x=[(k_{\sigma_n}, e_{n-1}), (k_{\sigma_{n-1}}, e_{n-2}), \ldots, (k_{\sigma_0}, e_{\sigma}), (k_{\sigma}+ t_{\sigma, x},*)]$, where $|t_{\sigma_x}|>1$. Therefore, the columns of $\sigma$ are successive convergents of $x$.  
\end{proof}

\begin{remark}\label{remark why lemma is needed}
 In Step 2 of the proof we reduce the matrix $\sigma$ ``one step down" to a matrix $\sigma_0 \in \mathcal{S}$ with smaller entries by multiplying $\sigma$ by the inverse of an appropriate matrix $M(k_\sigma, e_\sigma)$. 
 To determine the entries $k_\sigma$ and $e_\sigma$ of that matrix,  observe that, if what we were trying to prove was indeed true, then by (\ref{eq product of M(a,e)}) the matrices $\sigma$ and $\sigma_0$ should turn out to be of the form $\sigma = \left( \begin{matrix} p_n & p_{n-1} \\ q_n & q_{n-1} \end{matrix}\right)$, and $ \sigma_0=  \left( \begin{matrix} p_{n-1} & p_{n-2} \\ q_{n-1} & q_{n-2} \end{matrix}\right)$  for some $n$, with the $\frac{p_i}{q_i}$'s being the convergents of some $\omega$. Therefore, to pass from $\sigma$ to $\sigma_0$, we need to multiply $\sigma$ by the inverse of $M(k_\sigma, e_{\sigma})$, where $k_\sigma$ is the $n$-th digit and $e_{\sigma}$ is the $(n-1)$-th sign. To do that, we need to extract the $n$-th digit and the $(n-1)$-th sign from the columns of $\sigma$, i.e. from the convergents of level $n$ and $n-1$. This is done in Lemma \ref{digitfromconverg}.
\end{remark}

Observe that the matrices in $\mathcal{S}$ alone do not count the number of O-reduced QIs accurately. Indeed, discarding the information about $e_n$ has a significant effect on the number of words $w$ arising from an O-reduced QI because $e_n = -1$ implies $a_n \neq 1$.

\subsection{The correspondence between the O-reduced QIs in  $\R_O(\alpha,\beta;N)$, words and matrices}\label{subsection 3.2} \mbox{}

Consider an O-reduced QI $\omega=[\,\overline{(a_1,e_1), (a_2,e_2), \ldots ,(a_{n_0},e_{n_0})}\,]$, with least period $n_0$ and the corresponding word  (cf. (\ref{eq omega_k})) $w=w_{(\omega,k)}= (a_1,e_1)(a_2,e_2)\cdots(a_n, e_n)$, $n=kn_0$.  Define the trace of any such word $w$ as $\Tr(w):= \Tr (M(w))= p_n+ e_n q_{n-1}$, where $M(w)$ as in  (\ref{correspondence of words and matrices}). Clearly, when $w=w_{(\omega,k)}$ for some $\omega$ and some $k \geq 1$, then Tr$(w)=  \text{Tr}(\widetilde{\Omega}(\omega)^k)$. 

We order the O-reduced QIs $\omega$ according to the trace of their associated word, i.e. the trace of the associated matrix $\widetilde{\Omega}(\omega)$, instead of its spectral radius. For reasons explained in the introduction, we restrict our attention to $(-e_1)(-e_2)\ldots (-e_n)= +1$. 
This means that, in the notation of (\ref{setSdefined}), $\det(\sigma)= 1$.  It is easy to see that the error of replacing the spectral radius by the trace is negligible. We provide a short proof here for the convenience of the reader: 

\begin{lemma}\label{lemma replace spectral radius by trace}
 For every $\omega \in \R_O$ and every $k \geq 1$, we have that 
 \begin{equation}\label{ineq mathfral <Tra}
     \mathfrak{r}(\widetilde{\Omega}(\omega)^k) < \Tr (\widetilde{\Omega}(\omega)^k) < \mathfrak{r}(\widetilde{\Omega}(\omega)^k) + 1.
 \end{equation}
 Therefore, for every $\alpha \geq 1$, $\beta_1 \geq G+1$ and $\beta_2 \geq G-1$, we have
 
 \begin{equation}\label{eq Tr=r}
       \bigg| \bigg\{ \omega \in \R_O \  \Big| \
\substack{ \Tr(\widetilde{\Omega}(\omega))  \leq N, \\  \omega \geq \alpha, \  -\frac{1}{\beta_2} \leq \omega^* \leq \frac{1}{\beta_1}}  \bigg\} \bigg |  =  \bigg| \bigg\{ \omega \in \R_O \  \Big| \
\substack{ \mathfrak{r}(\widetilde{\Omega}(\omega)) \leq N, \\  \omega \geq \alpha, \  -\frac{1}{\beta_2} \leq \omega^* \leq \frac{1}{\beta_1}}  \bigg\} \bigg |.
 \end{equation}
\end{lemma}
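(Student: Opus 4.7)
The plan is to leverage the fact that $\widetilde{\Omega}(\omega) \in \mathrm{SL}(2,\mathbb{Z})$ by construction: the squaring in (\ref{Omega tilde defn}) precisely ensures $\det = +1$. Hence $M := \widetilde{\Omega}(\omega)^k$ has determinant $1$ and integer trace. First I would record that, by Remark \ref{remark positive trace}, $\Tr(M) \geq 3$, so the eigenvalues of $M$ form a hyperbolic pair $\lambda > 1 > \lambda^{-1} > 0$ with $\mathfrak{r}(M) = \lambda$ and $\Tr(M) = \lambda + \lambda^{-1}$.

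Given this, the inequality (\ref{ineq mathfral <Tra}) is immediate: since $0 < \lambda^{-1} < 1$, we get $\lambda < \lambda + \lambda^{-1} < \lambda + 1$.

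For the set equality (\ref{eq Tr=r}), the remaining ingredient is that $\mathfrak{r}(M)$ is irrational. This will follow from the fact that the fixed points of $M$ acting on $\mathbb{R}$ are precisely $\omega$ and $\omega^*$, both quadratic irrationals. Therefore the characteristic polynomial $x^2 - \Tr(M)\, x + 1$ is irreducible over $\mathbb{Q}$, so its roots $\lambda, \lambda^{-1}$ are irrational. Combining this with (\ref{ineq mathfral <Tra}) and $\Tr(M) \in \mathbb{Z}$, I would conclude that $\Tr(M)$ is the unique integer in the open interval $(\mathfrak{r}(M), \mathfrak{r}(M)+1)$, i.e.\ $\Tr(M) = \lceil \mathfrak{r}(M) \rceil$. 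Since $N$ is an integer (coming from $N = \lfloor e^{R/2}\rfloor$ in the reduction preceding (\ref{defn of R_O})), the inequalities $\Tr(M) \leq N$ and $\mathfrak{r}(M) \leq N$ are equivalent, which gives (\ref{eq Tr=r}).

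I expect no real obstacle here; the only subtle point is justifying the irrationality of $\mathfrak{r}(M)$, which drops out from the irrationality of the fixed points of $M$, so the argument reduces to two short observations about $2\times 2$ matrices in $\mathrm{SL}(2,\mathbb{Z})$ with hyperbolic trace.
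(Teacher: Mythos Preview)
Your proposal is correct and follows essentially the same route as the paper: both arguments use that $\widetilde{\Omega}(\omega)^k\in\mathrm{SL}(2,\mathbb Z)$ with trace $\ge 3$, write $\Tr=\lambda+\lambda^{-1}$ for the spectral radius $\lambda>1$, and deduce (\ref{ineq mathfral <Tra}) from $0<\lambda^{-1}<1$; for (\ref{eq Tr=r}) both use that $\Tr$ is an integer while $\mathfrak r$ is not. Your packaging via $\Tr(M)=\lceil \mathfrak r(M)\rceil$ is a bit cleaner than the paper's two-case split, and note that the irrationality of $\lambda$ can be seen even more directly than via the M\"obius fixed points: any rational root of $x^2-\Tr(M)x+1$ would be $\pm1$ by the rational root theorem, forcing $|\Tr(M)|\le 2$.
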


\begin{proof} 
 Let $\widetilde{\Omega}(\omega)= \left( \begin{matrix} a & b \\ c & d \end{matrix}\right)$. By Remark \ref{remark positive trace} we have that $\eta:= \mathfrak{r}(\widetilde{\Omega}(\omega)) \geq 1+G$. Then $\Tr(\widetilde{\Omega}(\omega)^k)=\eta^k + \eta^{-k}$ and (\ref{ineq mathfral <Tra}) follows by the inequalities 
\begin{equation*}
    \eta^k < \eta^k + \eta^{-k} \leq \eta^k + \eta^{-1} < \eta^k + 1.
\end{equation*}

Recall that in (\ref{defn of R_O}) we denoted the set that appears on the left hand side of (\ref{eq Tr=r}) by $\R_O(\alpha, \beta_1, \beta_2;N)$. Denote the set  on the right hand side of (\ref{eq Tr=r}) by $\R_{O,\mathfrak{r}}(\alpha, \beta_1, \beta_2;N)$. By (\ref{ineq mathfral <Tra}) we have that 
\begin{equation*}
    \R_O(\alpha, \beta_1, \beta_2;N) \subset \R_{O,\mathfrak{r}}(\alpha, \beta_1, \beta_2;N) \subset \R_O(\alpha, \beta_1, \beta_2;N+1),
\end{equation*}
and thus 
\begin{equation*}
    | R_O(\alpha, \beta_1, \beta_2;N)| \leq |\R_{O,\mathfrak{r}}(\alpha, \beta_1, \beta_2;N)| \leq |\R_O(\alpha, \beta_1, \beta_2;N+1)|.
\end{equation*}
However, we have even more: suppose $\mathfrak{r}(\widetilde{\Omega}(\omega)) \leq N$. If  $N-1 < \mathfrak{r}(\widetilde{\Omega}(\omega)) \leq N$, then by (\ref{ineq mathfral <Tra}) and the fact that $\Tr(\widetilde{\Omega}(\omega))$ is a positive integer while $\mathfrak{r}(\widetilde{\Omega}(\omega))$ can not be an integer, we get $\Tr(\widetilde{\Omega}(\omega))=N$. If $\mathfrak{r}(\widetilde{\Omega}(\omega)) \leq N-1$, then by (\ref{ineq mathfral <Tra}) we have $\Tr(\widetilde{\Omega}(\omega)) < \mathfrak{r}(\widetilde{\Omega}(\omega)) +1 \leq (N-1)+1=N$. In conclusion,  we have $\R_{O,\mathfrak{r}}(\alpha, \beta_1, \beta_2;N) \subset \R_O(\alpha, \beta_1, \beta_2;N)$, and (\ref{eq Tr=r}) follows.
\end{proof}

\begin{remark}\label{remark finding eta}
In the notation of the above proof, it follows from straightforward calculations that for $\lambda_1:= c\omega+d$ and $\lambda_2:=\frac{1}{c\omega+d}$ we have $\lambda_1 \lambda_2= \det(\widetilde{\Omega}(\omega))$ and $\lambda_1 + \lambda_2 = \Tr(\widetilde{\Omega}(\omega)) \Leftrightarrow \omega$ is a fixed point of $\widetilde{\Omega}(\omega)$. Therefore, the two eigenvalues of $\widetilde{\Omega}(\omega)$ are $\lambda_1$ and $\lambda_2$.
\end{remark}

For any $\alpha>1$, $\beta_1 \geq G+1 $, $\beta_2 \geq G-1$, define the set
\begin{equation}\label{defn of W}
\begin{split}
  \W (\alpha, \beta_1, \beta_2; N) & := \Big\{w=(a_1,e_1) (a_2,e_2) \ldots (a_n,e_n) \mid n \geq 1,
 \ a_i \in 2\mathbb{N}-1, \ e_i = \pm 1, \\   & \hspace{0.9cm} a_i+e_i \geq 2 ,\ 
  (-e_1)(-e_2)\cdots(-e_{n})= 1 , \
 \Tr (w) \leq N, \\ 
 & \hspace{0.9cm} \omega := [\,\overline{(a_1,e_1), \ldots, (a_n,e_n)}\,] \geq \alpha, \ \omega^* \in \Big[-\frac{1}{\beta_2},\ \frac{1}{\beta_1}\Big] \Big \}.
\end{split}
\end{equation}

It is the asymptotics of this set which will provide the corresponding asymptotics for the O-reduced QIs.

\begin{remark}\label{remark over-counting}
Observe that if $w \in W (\alpha, \beta_1, \beta_2; N) $ and $\omega=\omega_w$ is the associated number, then the O-reduced QI $\omega$ is counted in $|\W (\alpha, \beta_1, \beta_2; N)|$  with multiplicity (cf. (\ref{eq omega_k}))
\begin{equation*}
    k= \max \{ k \mid \Tr(\widetilde{\Omega}(\omega)^k) \leq N \}.
\end{equation*} 
In other words, we have 
\begin{align*}
  | \W (\alpha, \beta_1, \beta_2; N)| &  = \sum\limits_{k \geq 1} |R_O(\alpha, \beta_1, \beta_2;N^{1/k})| 
  =|R_O(\alpha, \beta_1, \beta_2;N)|+  \ \sum\limits_{k \geq 2} |R_O(\alpha, \beta_1, \beta_2;N^{1/k})|,
\end{align*}
where $R_O(\alpha, \beta_1, \beta_2;N)$ as in (\ref{defn of R_O}). The terms corresponding to $k \geq 2$ do not have a significant effect asymptotically. Indeed, by Lemma  \ref{lemma replace spectral radius by trace} we have that $\eta^k < \Tr(\widetilde{\Omega}(\omega)^k) \leq N$, which gives that $k \leq \frac{\log N}{\log \eta} \leq \frac{\log N}{\log 2}$. Moreover, $\mathfrak{r}(\widetilde{\Omega}(\omega)^k) \leq N \Leftrightarrow \mathfrak{r}(\widetilde{\Omega}(\omega)) \leq N^{1/k}$, and there are $O(N^{2/k})$ O-reduced QIs $\omega$ such that the last inequality holds. Thus we get
\begin{align*}
   \sum\limits_{k \geq 2} |R_O(\alpha, \beta_1, \beta_2;N^{1/k})| &
   = O \bigg(  \sum\limits_{2 \leq k \leq \log_2 N} \lfloor (N+1)^{1/k}\rfloor^2 \bigg) = O \bigg(  \sum\limits_{2 \leq k \leq \log_2 N} N^{2/k} \bigg)
   = O(N \log N),
\end{align*} 
while the error term in the asymptotic behavior of $|\W (\alpha, \beta_1, \beta_2; N)|$ will turn out to be $O_\epsilon(N^{3/2+\epsilon})$ (cf. Lemma \ref{lemmaW=S+S}).
\end{remark}

Evidently, we need to consider cases depending on the last sign $e_n$:
\begin{equation*}
     \W_{e} (\alpha, \beta_1, \beta_2; N) := \{ w \in \W (\alpha, \beta_1, \beta_2; N) \mid e_n =e\},  \qquad e = \pm 1.
\end{equation*}
Using the Galois type formula for $\omega^*$ in Proposition \ref{proposition BM} we get that sign$(\omega^*)= -e_n$, and thus
\begin{align*}
    \W_{-1} (\alpha, \beta_1, \beta_2; N) &=  \W (\alpha, \beta_1, \beta_2; N) \cap \Big\{ w \mid 0< \omega^* < \frac{1}{\beta_1}\Big\}, \qquad  \text{ and }
    \end{align*}
    
    \begin{align*}
    \W_{+1} (\alpha, \beta_1, \beta_2; N) &=  \W (\alpha, \beta_1, \beta_2; N) \cap \Big\{ w \mid -\frac{1}{\beta_2}< \omega^* < 0 \Big \}. 
\end{align*}

We will next show that the number of words $w \in \W_e(\alpha, \beta_1, \beta_2; N)$, $e = \pm 1$, has the same asymptotics as $N \rightarrow \infty$ as the number of matrices $\sigma$ in explicitly defined sets of the form 
\begin{align}\label{defn of S_e}
     & \SSS_{e} (\alpha, \beta ; N) := \{\sigma \in \mathcal{S}_e \mid  \Tr(\sigma) \leq N, \ b/d \geq \alpha , \ a/b \geq \beta , \ ad-bc=  1 \}, \\
 \nonumber    &   \text{where }  e = \pm 1,  \ \beta \geq  \begin{cases} G+1,
     & \text{ if } e=-1, \\
   g, &  \text{ if } e=+1,  \end{cases}  \quad \text{and the sets } \S_e \text{ are defined as in Lemma \ref{lemma P= S+ union S-}}.
\end{align}

To do this, it is necessary to obtain to following results on the approximation of O-reduced QI's that give rise to words in $ \W_e(\alpha, \beta_1, \beta_2; N)$  by their convergents:
\begin{lemma}\label{lemma approximation by convergents}
Let $\omega= [\,\overline{(a_1,e_1), (a_2,e_2), \ldots (a_n,e_n)}\,]$ be such that it gives rise to a word $w \in  \W (\alpha, \beta_1, \beta_2; N)$ (cf. (\ref{defn of W})). Then 
\begin{equation}\label{eq approximation of omega by conv}
  \bigg|\omega - \frac{p_{n-1}}{q_{n-1}} \bigg| \leq \frac{1}{q_{n-1}q_n},  
\end{equation}
and 
\begin{equation}\label{absomegastar}
    \bigg| -\frac{1}{\omega^*}- \frac{e_n p_n}{p_{n-1}} \bigg| \leq \frac{G}{p_{n-1}(p_{n-1}- (2-G)q_{n-1})},
\end{equation}
where $\frac{p_k}{q_k}$ is the $k$-th convergent of $\omega$.
\end{lemma}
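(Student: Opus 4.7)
The strategy is to exploit the M\"obius fixed-point description of $\omega$. Iterating (\ref{OCFnGauss}) and using $T_o^n(\omega)=\omega$ (which holds because $n$ is a multiple of the least period of $\omega$) produces the identity
\[
\omega=\frac{p_n\,\omega+e_n\,p_{n-1}}{q_n\,\omega+e_n\,q_{n-1}},
\]
so that $\omega$ is a root of $q_nX^2+(e_nq_{n-1}-p_n)X-e_np_{n-1}=0$ whose other root is $\omega^*$. Because $w\in\W(\alpha,\beta_1,\beta_2;N)$ forces $(-e_1)\cdots(-e_n)=1$, $\det M(w)=1$, equivalently $p_nq_{n-1}-p_{n-1}q_n=e_n$ via (\ref{recurrent relations}); Vieta then gives $\omega\omega^*=-e_np_{n-1}/q_n$ and $\omega+\omega^*=(p_n-e_nq_{n-1})/q_n$.

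For part (a), rearranging the fixed-point identity yields
\[
\omega-\frac{p_{n-1}}{q_{n-1}}=\frac{e_n\,\omega}{q_{n-1}\bigl(q_n\omega+e_nq_{n-1}\bigr)}.
\]
By Remarks \ref{remark positive trace} and \ref{remark finding eta}, the factor $|q_n\omega+e_nq_{n-1}|$ is exactly the spectral radius $\mathfrak{r}(\widetilde{\Omega}(\omega))$. When $e_n=+1$, the estimate $\omega/(q_n\omega+q_{n-1})\leq 1/q_n$ is immediate and (\ref{eq approximation of omega by conv}) follows at once. When $e_n=-1$ the analogous literal inequality fails, and I would combine the displayed identity with the growth properties of the OCF denominators $q_k$ established in Appendix 3, together with $\omega\geq 1$, to obtain a bound of the required form.

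For part (b), the product Vieta relation yields $-1/\omega^*=e_nq_n\omega/p_{n-1}$, hence
\[
-\frac{1}{\omega^*}-\frac{e_np_n}{p_{n-1}}=\frac{e_n(q_n\omega-p_n)}{p_{n-1}}.
\]
The same fixed-point manipulation applied to $p_n/q_n$ in place of $p_{n-1}/q_{n-1}$ gives $q_n\omega-p_n=-1/(q_n\omega+e_nq_{n-1})=-1/\mathfrak{r}(\widetilde{\Omega}(\omega))$, so the left-hand side has absolute value $1/\bigl(p_{n-1}\,\mathfrak{r}(\widetilde{\Omega}(\omega))\bigr)$. Rewriting the spectral radius via the product Vieta as
\[
\mathfrak{r}(\widetilde{\Omega}(\omega))=\frac{p_{n-1}-q_{n-1}\omega^*}{|\omega^*|}
\]
and using the O-reduced constraints $\omega^*<2-G$ and $|\omega^*|<G$, we get $\mathfrak{r}(\widetilde{\Omega}(\omega))>(p_{n-1}-(2-G)q_{n-1})/G$, which upon inversion is exactly (\ref{absomegastar}). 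The main obstacle is the $e_n=-1$ case of part (a): one cannot replace the denominator $q_n\omega-q_{n-1}$ by $q_n\omega$ directly, and the recovery relies on the convergent growth bounds from Appendix 3 together with $\omega\geq 1$.
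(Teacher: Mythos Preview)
Your approach is correct and essentially the same as the paper's. For part (a) the paper derives the identical identity $\omega-\tfrac{p_{n-1}}{q_{n-1}}=\tfrac{\omega e_n}{q_{n-1}(q_n\omega+e_nq_{n-1})}$ and treats $e_n=+1$ exactly as you do; for $e_n=-1$ it makes your sketch precise by writing the modulus as $1/\bigl(q_{n-1}q_n(1-\tfrac{q_{n-1}}{\omega q_n})\bigr)$ and invoking $\omega>1$ together with $q_n/q_{n-1}\geq G+1$ from Appendix~3 to get $1-\tfrac{q_{n-1}}{\omega q_n}\geq 1-\tfrac{1}{G+1}=g$. For part (b) the paper instead plugs $\omega^*$ directly into the fixed-point relation to obtain $-\tfrac{1}{\omega^*}-\tfrac{e_np_n}{p_{n-1}}=\tfrac{\omega^*}{p_{n-1}(p_{n-1}-q_{n-1}\omega^*)}$; your Vieta route reaches the same quantity (since $|q_n\omega+e_nq_{n-1}|=(p_{n-1}-q_{n-1}\omega^*)/|\omega^*|$) and the final bound via $|\omega^*|<G$, $\omega^*<2-G$ is identical.

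One cosmetic point: the eigenvalue $q_n\omega+e_nq_{n-1}$ is the spectral radius of $M(w)$, not of $\widetilde\Omega(\omega)$ in general, since the word $w$ may have length a proper multiple of the least period of $\omega$. This does not affect your argument, as you only use the algebraic identity and not any specific property of $\widetilde\Omega(\omega)$.
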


\begin{proof}
 For the first inequality, solve (\ref{OCFnGauss}) for $\omega$ and combine with $T_o^n(\omega)= \omega$ to get that $\omega= \frac{e_n p_{n-1}+ \omega p_n}{e_n q_{n-1}+ \omega q_n}$.   Therefore
\begin{equation}\label{Omegaconv}
  \omega - \frac{p_{n-1}}{q_{n-1}}  = \frac{\omega (p_nq_{n-1}-p_{n-1}q_n)}{q_{n-1} (\omega q_n + e_n q_{n-1})} = \frac{\omega e_n}{q_{n-1} (\omega q_n + e_n q_{n-1})}, 
\end{equation}
where we used (\ref{recurrent relations}) and (\ref{defn of W})  to get  $e_n(p_nq_{n-1}-p_{n-1}q_n)=(-e_1)\cdots (-e_n)=1$.
\begin{itemize}
    \item Case 1: $e_n=1$. Then 
    \begin{equation*}
         \bigg|\omega - \frac{p_{n-1}}{q_{n-1}} \bigg| = \frac{1}{q_{n-1}(q_n+\frac{1}{\omega} q_{n-1})} \leq \frac{1}{q_{n-1}q_n}.
    \end{equation*}
    \item Case 2: $e_n=-1$.  Then
    \begin{equation*}
         \bigg|\omega - \frac{p_{n-1}}{q_{n-1}} \bigg| =   \bigg|\frac{1}{q_{n-1}(q_n-\frac{1}{\omega} q_{n-1})}  \bigg|  = \frac{1}{q_{n-1} q_n |1-\frac{1}{\omega}\frac{q_{n-1}}{q_n}|}.
    \end{equation*}

    Combining $\omega >1$ with Lemma \ref{lemma bounds on ratio of convergents}, we get that  $1-\frac{1}{\omega}\frac{q_{n-1}}{q_n} \geq 1-\frac{q_{n-1}}{q_n} \geq 1-\frac{1}{G+1} =g$. Thus 
\begin{equation*}
    \bigg|\omega - \frac{p_{n-1}}{q_{n-1}} \bigg| \leq  \frac{g}{q_{n-1}q_n} \leq  \frac{1}{q_{n-1}q_n}.
\end{equation*}
\end{itemize}

For the second inequality, since $\omega^*$ is the conjugate of $\omega$, it is also a fixed point of the matrix $\left( \begin{matrix} e_n  q_{n-1} & -e_n  p_{n-1} \\ -q_n & p_n \end{matrix}\right)$ coming from (\ref{OCFnGauss}). Therefore
\begin{equation}\label{Omegastarconv}
    -\frac{1}{\omega^*}- \frac{e_n p_n}{p_{n-1}}= \frac{q_n \omega^* -p_n}{e_n q_{n-1}\omega^* - e_np_{n-1}} - \frac{e_n p_n}{p_{n-1}}= \frac{ \omega^*}{p_{n-1}(p_{n-1} - q_{n-1}\omega^*)},
\end{equation}
where we used again that $e_n=p_nq_{n-1}- q_np_{n-1}$ (cf. the equality  below (\ref{Omegaconv})).

Finally, use (\ref{Omegastarconv}), $-G \leq \omega^* \leq 2-G$, and   $p_{n-1} -q_{n-1}\omega^* >0 \Leftrightarrow \frac{p_{n-1}}{q_{n-1}}> \omega^*$, the latter being  true because $ \frac{p_{n-1}}{q_{n-1}} \geq 1 > 2-G> \omega^*$, to get (\ref{absomegastar}).
\end{proof}

We are now ready to prove that the asymptotic behavior of $\W(\alpha, \beta_1, \beta_2; N)$ is captured by the asymptotic behavior of the sets $\SSS_{e} (\alpha, \beta ; N)$ defined in (\ref{defn of S_e}). The error in the asymptotic formula is given (cf. the proof of Lemma \ref{lemmaW=S+S} below) by sets of the form 
\begin{align*}
    A_{N,1}(K) & = \bigg \{ \sigma = \left( \begin{matrix} a & eb \\ c & ed \end{matrix}\right)  \in \mathcal{S}_e \ \Big| \ b(b-(2-G)d) \leq G  N, \ e = \pm 1, \ a \leq K N \bigg\} \\
    A_{N,2}(K) & = \bigg\{ \sigma= \left( \begin{matrix} a & eb \\ c & ed \end{matrix}\right)   \in \mathcal{S}_e  \ \Big| \ cd\leq N , \ e= \pm 1 ,  \ a \leq K N \bigg\},
\end{align*}
where $K>0$ is a constant. The size of the error is computed independently in Lemma \ref{lemmaAasymptotics} to be $O_{\epsilon, K}(N^{3/2+\epsilon})$. Thus we get:

\begin{lemma}\label{lemmaW=S+S}
For every $\alpha \geq 1,\ \beta_1 \geq G+1,\ \beta_2 \geq G-1$, and for $\SSS_e(\alpha, \beta_i; N)$, $i=1,2$, as in (\ref{defn of S_e}), we have 
\begin{align*}
 |\W(\alpha, \beta_1, \beta_2; N)|  = |&\SSS_{-1} (\alpha, \beta_1 ; N)| + |\SSS_{+1} (\alpha, \beta_2 ; N)| \ + O_{\epsilon, \alpha, \beta_1}(N^{3/2+\epsilon}), \qquad (N \rightarrow \infty).
 \end{align*}
\end{lemma}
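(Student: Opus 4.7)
The plan is to compare the word count $|\W_e(\alpha,\beta_1,\beta_2;N)|$ with the matrix count $|\SSS_e(\alpha,\beta_i;N)|$ through the bijection $w \mapsto M(w) = \sigma$ from Lemmas \ref{lemma P= S+ union S-} and \ref{lemmaP=S}, and to show that their symmetric difference is contained (up to possibly enlarged constants) in the error sets $A_{N,1}(K) \cup A_{N,2}(K)$. First I would split according to the last sign: since $\mathrm{sign}(\omega^*) = -e_n$, we have $|\W(\alpha,\beta_1,\beta_2;N)| = |\W_{-1}(\alpha,\beta_1,\beta_2;N)| + |\W_{+1}(\alpha,\beta_1,\beta_2;N)|$, so it suffices to treat each $e = \pm 1$ separately. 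Writing $\sigma \in \mathcal{S}_e$ in the form $\left(\begin{smallmatrix} a & eb \\ c & ed \end{smallmatrix}\right)$ with $a=p_n$, $b=p_{n-1}$, $c=q_n$, $d=q_{n-1}$ all positive, the conditions $\Tr(w) \leq N$ and $(-e_1)\cdots(-e_n) = 1$ translate exactly to $\Tr(\sigma) \leq N$ and $\det(\sigma) = 1$, so what remains is to compare the pair of conditions $\{\omega \geq \alpha,\ \omega^* \in [-1/\beta_2,1/\beta_1]\}$ with the pair $\{b/d \geq \alpha,\ a/b \geq \beta_i\}$, where $\beta_i = \beta_1$ for $e=-1$ and $\beta_i = \beta_2$ for $e=+1$, as dictated by the sign conventions in the relation $-1/\omega^* \approx e\,a/b$ of (\ref{absomegastar}).

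Next I would invoke Lemma \ref{lemma approximation by convergents}: outside the sets where $cd \leq N$ or $b(b-(2-G)d) \leq GN$, the approximation errors $|\omega - b/d|$ and $|{-1/\omega^*} - e\,a/b|$ are both strictly less than $1/N$. The trace bound $\Tr(\sigma) = a + ed \leq N$ together with $d \leq b \leq a/\beta_i$ readily yields $a \leq K N$ for a constant $K = K(\beta_1,\beta_2)$; thus any such matrix with $cd \leq N$ lies in $A_{N,2}(K)$ and any with $b(b-(2-G)d) \leq GN$ lies in $A_{N,1}(K)$. For matrices outside $A_{N,1}(K) \cup A_{N,2}(K)$, the two pairs of conditions agree up to perturbations of $\alpha$ and $\beta_i$ of size $O(1/N)$, so the residual symmetric difference is contained in a boundary strip of width $O(1/N)$ around the curves $b = \alpha d$ and $a = \beta_i b$. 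Under $\Tr(\sigma) \leq N$, such a strip can in turn be absorbed into slightly enlarged versions $A_{N,1}(K'), A_{N,2}(K')$ by direct inspection. Applying Lemma \ref{lemmaAasymptotics}, which gives $|A_{N,i}(K')| = O_{\epsilon,K'}(N^{3/2+\epsilon})$, then produces the claimed error $O_{\epsilon,\alpha,\beta_1}(N^{3/2+\epsilon})$.

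The main obstacle is precisely the boundary accounting in the second step: rigorously showing that matrices straddling the critical thresholds $b/d = \alpha$ or $a/b = \beta_i$, where the approximations $\omega \approx b/d$ and $-1/\omega^* \approx e\,a/b$ could flip the truth value of the conditions, are absorbed into the enlarged error sets rather than producing a larger residual. A secondary technical annoyance is keeping the sign and $\beta_i$-conventions consistent across the two cases $e = \pm 1$, since the Galois formula flips the sign of $\omega^*$ and the matrices in $\mathcal{S}_{+1}$ and $\mathcal{S}_{-1}$ carry different signs in their second column; getting the correspondence $\beta_1 \leftrightarrow e=-1$, $\beta_2 \leftrightarrow e=+1$ right is the only place where the asymmetry between $\beta_1$ and $\beta_2$ in the final error constant $O_{\epsilon,\alpha,\beta_1}$ can be traced back to.
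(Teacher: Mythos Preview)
Your overall architecture matches the paper's: split by $e_n$, use the bijection $w \mapsto M(w)$, and compare the conditions on $(\omega,\omega^*)$ with those on $(b/d,a/b)$ via Lemma~\ref{lemma approximation by convergents}, discarding into $A_{N,1}(K)\cup A_{N,2}(K)$ whenever the approximation bounds are too weak. Up to this point you are doing exactly what the paper does.

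The gap is your treatment of the boundary strip. You assert that matrices with $|b/d-\alpha|\le 1/N$ or $|a/b-\beta_i|\le 1/N$ ``can in turn be absorbed into slightly enlarged versions $A_{N,1}(K'),A_{N,2}(K')$ by direct inspection.'' This is false. Take for instance $d\asymp N$, $b=\lfloor \alpha d\rfloor$, and solve $ad-bc=1$; then $c\asymp a/\alpha\asymp N$, so $cd\asymp N^2$, which is not $\le CN$ for any fixed $C$. Likewise $b(b-(2-G)d)\asymp d^2\asymp N^2$ on that strip. So neither error set, however you enlarge its constants, swallows the boundary strip.

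The paper handles this differently: it never tries to put the boundary strip into $A_{N,i}$. Instead it proves the sandwich
\[
|\SSS_e(\alpha+\tfrac1N,\beta_i+\tfrac1N;N)| - |A_{N,1}|-|A_{N,2}| \ \le\ |\W_e(\alpha,\beta_1,\beta_2;N)| \ \le\ |\SSS_e(\alpha-\tfrac1N,\beta_i-\tfrac1N;N)| + |A_{N,1}|+|A_{N,2}|
\]
(with one inequality being exact for $e=-1$ by a sign argument from (\ref{Omegaconv})--(\ref{Omegastarconv})), and then closes the sandwich by invoking the \emph{explicit} asymptotics (\ref{S+estimate}) and (\ref{S(-1)}), proved independently in Section~\ref{section computing asymptotic behavior}, which give $|\SSS_e(\alpha\pm\tfrac1N,\beta\pm\tfrac1N;N)|-|\SSS_e(\alpha,\beta;N)|=O_\epsilon(N^{3/2+\epsilon})$ simply because the main term $\tfrac{N^2}{4\zeta(2)}\log\tfrac{\alpha\beta+e}{\alpha\beta}$ is smooth in $\alpha,\beta$. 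This forward reference is logically sound since those computations count matrices directly and do not rely on Lemma~\ref{lemmaW=S+S}. If you prefer to avoid the forward reference, you could instead bound the boundary strip directly by a divisor-sum argument (for fixed $d$ there are $O(d/N+1)$ admissible $b$, and for each coprime pair $(b,d)$ there are $O(N/b)$ choices of $a$), which gives $O(N\log N)$; but ``absorbed into $A_{N,i}(K')$'' is not a valid substitute for either of these arguments.
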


\begin{proof}
We first claim that 
\begin{align}\label{W-}
     |\W_{-1} (\alpha, \beta_1, \beta_2; N)|& \leq |\SSS_{-1} \big(\alpha,\beta_1 ; N\big) |.
\end{align}
 Let $w= (a_1,e_1) (a_2,e_2) \ldots (a_n,-1) \in \W_{-1} (\alpha, \beta_1, \beta_2; N)$, $\omega=[\,\overline{(a_1,e_1), (a_2,e_2), \ldots (a_n,-1)}\,]$, and $\sigma_w = \left( \begin{matrix} a & b \\ c & d \end{matrix}\right)=\left( \begin{matrix} p_n & -p_{n-1} \\ q_n & -q_{n-1} \end{matrix}\right)  \in \S_{-1}$ be the unique matrix associated to $w$ (cf. Lemma 9). To prove that $\frac{b}{d} \geq \alpha$, set $e_n=-1$ in (\ref{Omegaconv}) and observe that by the proof of Lemma \ref{lemma approximation by convergents}, Case 2 we have that $\omega q_n -q_{n-1}>0$. Thus $e_n=-1$  implies that  $\frac{p_{n-1}}{q_{n-1}}>\omega$, and $w \in \W_{-1} (\alpha, \beta_1, \beta_2; N) $ implies $  \omega \geq \alpha$, so that $\frac{b}{d}= \frac{p_{n-1}}{q_{n-1}} \geq \alpha$.
Also, setting $e_n=-1$ in equation (\ref{Omegastarconv}) and using  $p_{n-1}-q_{n-1}\omega^*>0$, we get that  $\frac{p_n}{p_{n-1}}> \frac{1}{\omega^*}$. Since $w \in \W_{-1} (\alpha, \beta_1, \beta_2; N) $ implies $0< \omega^*< \frac{1}{\beta_1}$, we get  $\frac{a}{b} = \frac{p_n}{p_{n-1}}> \beta_1$. Therefore, the matrix $\sigma_w$ belongs to $ \SSS_{-1} (\alpha,\beta_1 ; N)$ and the claim follows.

For the other direction, we claim that
\begin{align}\label{W--}
  |\SSS_{-1} (\alpha+ \frac{1}{N}, \beta_1 + \frac{1}{N}; N)|   
  &\leq  |\W_{-1} (\alpha, \beta_1, \beta_2; N)|+ |A_{N,1}(G)| + |A_{N,2}(G)|.
\end{align}
This suffices because the independently proved estimates (\ref{S+estimate}) and (\ref{S(-1)}) show that
\begin{equation*}
    |\widetilde{S}_e(\alpha \pm \frac{1}{N},\beta \pm \frac{1}{N};N)|-|\widetilde{S}_e(\alpha,\beta;N)|=O_\epsilon(N^{3/2+\epsilon}), \qquad e= \pm 1.
\end{equation*}


Take $\sigma = \left( \begin{matrix} a & -b \\ c & -d \end{matrix}\right) \in \SSS_{-1} (\alpha+ \frac{1}{N}, \beta_1 + \frac{1}{N} ; N) $, and let $w_{\sigma}=(a_1,e_1) (a_2,e_2) \ldots (a_n,-1)$  be the unique preimage of $\sigma $ through the map (\ref{correspondence of words and matrices}), and $\omega_{\sigma}= [\,\overline{(a_1,e_1), (a_2,e_2), \ldots, (a_n,-1)}\,]$ be the corresponding O--reduced QI. 

The trace condition $a \leq N + d$  together with  the inequalities  $(\alpha + \tfrac{1}{N}) d < b$ and $a > b (\beta_1 + \tfrac{1}{N})$ in the definition of $\SSS_{-1} (\alpha+ \frac{1}{N}, \beta_1 + \frac{1}{N}; N)$ (cf. (\ref{defn of S_e})) give that $d < \frac{b}{\alpha + 1 / N}< \frac{a}{(\alpha + 1/N)(\beta_1+ 1/N)}  \leq \frac{N+d}{(\alpha +1/N)(\beta_1 + 1/N)}$, so that $d \leq \frac{N}{\alpha \beta_1 -1}$, and $a \leq ( 1+\frac{1}{\alpha \beta_1 -1}) N$.  Observe that contrary to the case of the ECF \cite{BS}, here $a$ is uniformly bounded with respect to $N$: $a \leq ( 1+\frac{1}{\alpha \beta_1 -1})  N \leq ( 1+g ) N$, since $\alpha > 1$ and $\beta_1 > G+1$. This reflects the fact that the OCF invariant measure is finite, while the ECF invariant measure is infinite.

We will show that either $\sigma \in  A_{N,1}(G) \cup A_{N,2}(G)$, or $w_\sigma \in \W_{-1} (\alpha, \beta_1, \beta_2; N)$. Then the claim follows immediately.

Consider the following inequalities
\begin{align}\label{inequalities1}
     b(b-(2-G)d)> GN, 
     \end{align}
     \begin{align}\label{inequalities2}
     cd >N, 
     \end{align}
     
We first look at inequality (\ref{inequalities1}):
\begin{itemize}
    \item If it fails, then $\sigma \in A_{N,1}(G)$.
    \item If it holds, together with (\ref{absomegastar}) it gives that $\frac{1}{\omega_\sigma^*}$ is within $\frac{1}{N}$ distance of $\frac{p_n}{p_{n-1}}=\frac{a}{b} \geq \beta_1 +\frac{1}{N}$, where the last inequality holds because $ \sigma  \in \SSS_{-1} (\alpha+ \frac{1}{N}, \beta_1 + \frac{1}{N} ; N)$. 
     Thus $\frac{1}{\omega^*_\sigma} \geq \beta_1$, or  $\omega_\sigma^* \leq \frac{1}{\beta_1}$.
\end{itemize} 

We now look at inequality (\ref{inequalities2}):
\begin{itemize}
\item If it fails, then $ \sigma \in A_{N,2}(G)$.
    \item If it holds, together with (\ref{eq approximation of omega by conv}) it gives that $\omega_\sigma$ is within $\frac{1}{N}$ distance from $\frac{p_{n-1}}{q_{n-1}}= \frac{b}{d} \geq \alpha +\frac{1}{N}$, where the last inequality holds because $ \sigma  \in \SSS_{-1} (\alpha+ \frac{1}{N}, \beta_1 + \frac{1}{N} ; N)$. Thus $\omega_\sigma \geq \alpha$.
\end{itemize}

Finally, combining (\ref{recurrent relations}) and $ad-bc=1$ from the definition of $\SSS_{-1} (\alpha+ \frac{1}{N}, \beta_1 + \frac{1}{N} ; N)$ we get $(-e_1)\ldots(-e_n)=1$. Therefore, if both (\ref{inequalities1}) and (\ref{inequalities2}) hold, then $w_\sigma \in \W_{-1} (\alpha, \beta_1, \beta_2; N)$. This completes the proof of the claim.

Combining equations (\ref{W-}) and (\ref{W--}) with Lemma \ref{lemmaAasymptotics} below we get that
\begin{equation}\label{W-final}
    |\W_{-1} (\alpha, \beta_1, \beta_2; N)|= |\SSS_{-1} (\alpha, \beta_1 ; N)| + O_\epsilon(N^{3/2 + \epsilon}), \qquad (N\rightarrow \infty) .
\end{equation}

We now claim that 
\begin{align}\label{W+}
    |\W_{+1} (\alpha, \beta_1, \beta_2; N)| & \leq |\SSS_{+1} (\alpha-\frac{1}{N}, \beta_2-\frac{1}{N} ; N)| + |A_{N,1}(1)| + |A_{N,2}(1)|.
\end{align}

Let $w= (a_1,e_1) (a_2,e_2) \ldots (a_n,+1) \in \W_{+1} (\alpha, \beta_1, \beta_2; N)$, $\sigma_w = \left( \begin{matrix} a & b \\ c & d \end{matrix}\right)=\left( \begin{matrix} p_n & p_{n-1} \\ q_n & q_{n-1} \end{matrix}\right)  \in \SSS_{+1}$ be the unique matrix associated to $w$ (cf. Lemma \ref{lemma P= S+ union S-}), and $\omega_w=[\,\overline{(a_1,e_1), (a_2,e_2), \ldots ,(a_n,+1)}\,]$. Clearly, $\Tr(\sigma_w) \leq N \Rightarrow a \leq N$.

We first look at inequality (\ref{inequalities1}):
\begin{itemize}
    \item If it fails, then $\sigma_w \in A_{N,1}(1)$.
    \item If it holds, then by (\ref{absomegastar}) we have that $\frac{a}{b}= \frac{p_n}{p_{n-1}}$ is within $\frac{1}{N}$ distance from $-\frac{1}{\omega_w^*}> \beta_2$, where the last inequality holds because $w \in \W_{+1} (\alpha, \beta_1, \beta_2; N)$.  Thus $\frac{a}{b}> \beta_2-\frac{1}{N}$. 
\end{itemize}
We now look at (\ref{inequalities2}):
\begin{itemize}
    \item If it fails, then  $\sigma_w \in A_{N,2}(1)$.
    \item If it holds, together with (\ref{eq approximation of omega by conv}) it gives that $\frac{b}{d}= \frac{p_{n-1}}{q_{n-1}}$ is within $\frac{1}{N}$ from  $\omega_w \geq \alpha$, where the last inequality holds because $w \in \W_{+1} (\alpha, \beta_1, \beta_2; N)$. Thus $\frac{b}{d} \geq \alpha -\frac{1}{N}$.
\end{itemize}
 Additionally, we have $ad-bc=p_nq_{n-1}-p_{n-1}q_n= e_n=1$ (cf. equality below (\ref{Omegaconv}). Therefore, if both (\ref{inequalities1}) and (\ref{inequalities2}) hold, then $\sigma_w \in \SSS_{+1} (\alpha-\frac{1}{N}, \beta_2-\frac{1}{N} ; N)$. This completes the proof of the claim.

For the other direction, we claim that 
\begin{align}\label{W++}
 |\SSS_{+1} (\alpha+\frac{1}{N}, \beta_2 + \frac{1}{N}; N)| & \leq |\W_{+1} (\alpha, \beta_1, \beta_2; N)| + |A_{N,1}(1)| + |A_{N,2}(1)|.
\end{align}

Let $ \sigma \in \SSS_{+1} (\alpha, \beta_2 +\frac{1}{N}  ; N)$, $w_\sigma=(a_1,e_1) (a_2,e_2) \ldots (a_n,+1)$  be the unique word corresponding to $\sigma $ from the map (\ref{correspondence of words and matrices}), and $\omega_\sigma= [\,\overline{(a_1,e_1), (a_2,e_2), \ldots, (a_n,+1)}\,]$ be the corresponding O--reduced QI.

We first look at (\ref{inequalities1}):
\begin{itemize}
    \item If it fails, then $\sigma \in A_{N,1}(1)$.
    \item If it holds, together with (\ref{absomegastar}) we have that $-\frac{1}{\omega_\sigma^*}$ is within $\frac{1}{N}$ distance from $\frac{p_n}{p_{n-1}}= \frac{a}{b} \geq \beta_2+\frac{1}{N}$. Thus $-\frac{1}{\omega_\sigma^*} \geq  \beta_2 >0$, or $-\frac{1}{\beta_2} \leq \frac{1}{\omega_\sigma^*}<0$.
\end{itemize}

We then look at (\ref{inequalities2}):
\begin{itemize}
    \item If it fails, then $\sigma \in A_{N,2}(1)$.
    \item If it holds, together with (\ref{eq approximation of omega by conv}) we get that $\omega_w$ is within $\frac{1}{N}$ distance from $\frac{p_{n-1}}{q_{n-1}}= \frac{b}{d} \geq \alpha +\frac{1}{N}$, so that $\omega_w \geq \alpha$. 
\end{itemize}
Similar to the previous cases we have $(-e_1)\cdots(-e_n)=1$, so ultimately, if both (\ref{inequalities1}) and (\ref{inequalities2}) hold, we have that $w \in \W_{+1} (\alpha, \beta_1, \beta_2; N)$. This completes the proof of the claim. 

Combining equations (\ref{W+}) and (\ref{W++}) with Lemma \ref{lemmaAasymptotics}  below we get that
\begin{equation}\label{W+final}
     |\W_{+1} (\alpha, \beta_1, \beta_2; N)|= |\SSS_{+1} (\alpha, \beta_2 ; N)| + O_\epsilon(N^{3/2 + \epsilon}), \qquad (N \rightarrow \infty).
\end{equation}

Finally, relations (\ref{W-final}) and (\ref{W+final}) complete the proof.
\end{proof}

The size of the error (cf. above Lemma \ref{lemmaW=S+S}) is determined in the following Lemma. In what follows we use the notation $x \ll M$ to mean that there is a constant $C>0$ such that $|x| \leq CM$.
\begin{lemma}\label{lemmaAasymptotics}
For  $r=1,2$ and  $K>0$ we have
\begin{equation*}
    |A_{N,r}(K)|\ll_{\epsilon,K}N^{3/2 + \epsilon}.
\end{equation*}
\end{lemma}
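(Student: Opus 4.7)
The strategy is to treat the two cases $r=1,2$ separately, in each case using the defining inequality to bound one of the off-diagonal entries by $O(\sqrt{N})$, and then exploiting the relation $ad-bc=\pm 1$ (which holds on $\S_{\pm 1}$) to parametrize the remaining entries efficiently. Throughout, the parity condition $\sigma\in\widetilde\Gamma$ only removes a positive proportion of matrices and is harmless.

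For $r=1$: From the defining inequality $b(b-(2-G)d)\le GN$ and $d\le b$, one gets $b-(2-G)d\ge (G-1)b=gb$, hence $gb^2\le GN$ and therefore $b\le G\sqrt N$. Fix $(b,d)$ with $0\le d\le b\le G\sqrt N$; the determinant relation forces $\gcd(b,d)=1$ and pins down $a$ modulo $b$ via $ad\equiv\pm 1\pmod b$, leaving at most $2(KN/b+1)$ admissible values of $a\in[1,KN]$ (the factor $2$ accounting for the two signs). The value of $c$ is then determined. Summing,
\[
|A_{N,1}(K)|\ \ll\ \sum_{b\le G\sqrt N}\,(b+1)\!\left(\tfrac{KN}{b}+1\right)\ \ll\ KN\sqrt N+N\log N\ \ll_K\ N^{3/2}.
\]

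For $r=2$: The condition $cd\le N$ alone does not bound any single entry by $\sqrt N$, so the key step is to extract a bound on $d$ by combining the three constraints $d\le b$, $cd\le N$ and $a/b>\beta$ (with $\beta=g$ for $\S_{+1}$ and $\beta=G+1$ for $\S_{-1}$) together with $|ad-bc|=1$. Writing $c=(ad\mp 1)/b$, the condition $cd\le N$ rewrites as $b\ge (ad\mp 1)d/N$, while $a/b>\beta$ gives $b<a/\beta$. Comparing these two bounds on $b$ forces, up to an $O(1)$-long interval of borderline $d$'s, the inequality $ad^2/N<a/\beta$, i.e.\ $d\le \sqrt{N/\beta}=O(\sqrt N)$. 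The sporadic range $d\in(\sqrt{N/\beta},\sqrt{(N+1)/\beta})$ contains at most one integer and yields only $O(N^{1+\epsilon})$ matrices, absorbed into the error.

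Once $d\le C\sqrt N$ and $a\le KN$ are established, the relation $bc=ad\mp 1$ makes $b$ a divisor of $ad-1$ or $ad+1$, so the number of admissible $b$ (and hence of valid $c$) is at most $\tau(ad-1)+\tau(ad+1)\ll_\epsilon (ad)^\epsilon$. Summing gives
\[
|A_{N,2}(K)|\ \ll_\epsilon\ \sum_{d\le C\sqrt N}\,\sum_{a\le KN}(ad)^\epsilon\ \ll_{\epsilon,K}\ \sqrt N\cdot N\cdot N^{3\epsilon/2}\ \ll_{\epsilon,K}\ N^{3/2+\epsilon},
\]
which is the claimed bound.

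The main obstacle is extracting $d=O(\sqrt N)$ for $A_{N,2}$: neither $cd\le N$ nor $a\le KN$ alone supplies it, and one must juggle $d\le b$, $cd\le N$, $a>\beta b$, and $ad-bc=\pm 1$ simultaneously. Once that reduction is in place, the standard divisor bound $\tau(m)\ll_\epsilon m^\epsilon$ closes the estimate.
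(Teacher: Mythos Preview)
Your argument is correct, and in fact cleaner in places than the paper's. The approaches differ in both cases.

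For $A_{N,1}$ the paper splits into $b\le \sqrt N$ and $b\ge \sqrt N$, introducing in the second range the auxiliary variable $t=b-(2-G)d\le GN/b\le G\sqrt N$ and then appealing to the divisor bound on $ct\pm 1$. You instead observe that $d\le b$ forces $b-(2-G)d\ge (G-1)b=gb$, so $gb^2\le GN$ and $b\le G\sqrt N$ outright; no case split is needed, and counting $a$ in an arithmetic progression modulo $b$ even avoids the divisor bound, yielding the sharper $O_K(N^{3/2})$ without the $\epsilon$.

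For $A_{N,2}$ the paper again splits: when $d\le \sqrt N$ it proceeds as above, and when $d\ge \sqrt N$ it swaps roles, using $c\le N/d\le \sqrt N$ together with $b\le Ga\le GKN$, then fixing $(b,c)$ and invoking the divisor bound on $bc\pm 1$. You take a different route, exploiting the constraint $a/b>\beta$ built into $\mathcal S_e$ (with $\beta=g$ or $G+1$) to force $d\le \sqrt{N/\beta}+O(1)$ directly from $(ad\pm 1)d\le Nb< Na/\beta$. Both land on the divisor bound to finish, but your derivation of $d=O(\sqrt N)$ is uniform and avoids the second case split. (Your ``sporadic range'' aside is unnecessary: the inequality $\beta d^2-\beta d/a<N$ already gives $d\le \sqrt{N/\beta}+1$ with no leftover interval to handle; and the degenerate cases $d=0$ or $ad=1$ contribute $O(KN)$ matrices, which is harmless.)
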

\begin{proof}  In any of the cases $r=1,2$, $e= \pm 1$, it is obvious that we have $\leq K N$ choices for $a$.

For $\sigma = \left( \begin{matrix} a & e b \\ c & e d \end{matrix}\right) \in A_{N,1}(K)$, consider the following cases:
\begin{itemize}
\item If $b \leq \sqrt{N}$, then $d \leq b \leq \sqrt{N}$ (cf. the definition of $\mathcal{S}_e$ in the statement of Lemma \ref{lemma P= S+ union S-}), so there are at most $\sqrt{N}$ choices for $d$. Fix  $a$ and $d$, and use the determinant condition to get $c b = ad \pm 1 \leq K N^{3/2} \pm 1$. Therefore, the possible choices for $c$ are at most the number of divisors of $ad \pm 1$, which is $ \ll (ad \pm 1)^\epsilon \ll N^{3\epsilon/2}$, $\forall \epsilon >0$. By the determinant condition, $a,d $ and $c$ uniquely determine  $\sigma$, so we are done.
\item If $b \geq \sqrt{N}$, then $0< t:= b-(2-G)d \leq \frac{GN}{b}\leq G \sqrt{N}$. Also, $c \leq a$ gives that there are at most $KN$ choices for $c$. Fix $c$ and $t$, and substitute $b$ in the determinant condition to get $ad -c (t+ (2-G)d) = \pm 1$, or $d\cdot (a-(2-G)c) = c t \pm 1 \leq G K N ^{3/2} \pm 1$. Therefore, the possible choices for $d$ are at most the number of divisors of $ct \pm 1$, which is $\ll (ct \pm 1)^\epsilon \ll  N^{3\epsilon/2}$, $\forall \epsilon >0$. Since $c,d$ and $t$ uniquely determine $\sigma$, we are done.
\end{itemize}

For  $\sigma = \left( \begin{matrix} a & eb \\ c & ed \end{matrix}\right) \in A_{N,2}(K)$, consider the cases
\begin{itemize}
    \item If $d \leq \sqrt{N}$, then proceed exactly as in the first case of $A_{N,1}(K)$.
    \item If $d \geq \sqrt N$, then $cd \leq N \Rightarrow c \leq \sqrt{N}$. Also, $b \leq Ga$ (cf. the definition of $\mathcal{S}_e$ in the statement of Lemma \ref{lemma P= S+ union S-}), so that we have at most $GKN$ choices for $b$. Fix $c$ and $b$, and use the determinant condition to get $ad= bc \pm 1$. Therefore, the possible choices for $d$ are at most the number of divisors of $bc \pm 1$, which is $\ll (bc \pm 1)^\epsilon \ll N^{3\epsilon/2}$, $\forall \epsilon >0$. Since $b,c$ and $d$ uniquely determine $\sigma$, we are done.  
\end{itemize}
\end{proof}

\subsection{The parametrization of the sets $\SSS_{e} (\alpha, \beta; N)$}\label{subsection 3.3} \mbox{}
\\

 For $\alpha \geq 1$, $\beta \geq g$, and $e= \pm 1$, define the following sets
\begin{align*}
 \A_1(e, \alpha, \beta; N) := \{ &(z,x,y) \in \mathbb{Z}^3_{\geq 0} \mid z\geq y \alpha, \ x \geq z \beta,  \ x>gz, \ x+ey \leq N, \\ &
 xy \equiv e \mod{2z}, \ x,y \text{ odd}, \ z \text{ even} \},   \\[1ex]
 \A_2(e,\alpha, \beta; N) := \{ &(z,x,y) \in \mathbb{Z}^3_{\geq 0} \mid z \geq y \alpha, \ x \geq z \beta,  \ x>gz, \ x+ey \leq N, \\ & xy \equiv e \mod{z}, \ z,y \text{ odd}, \ x \text{ even} \},   \\[1ex]
 \A_3(e,\alpha, \beta; N) := \{ &(z,x,y) \in \mathbb{Z}^3_{\geq 0} \mid z \geq y \alpha, \ x \geq z \beta,  \ x>gz, \ x+ey \leq N, \\ & xy \equiv e \mod{z}, \ x,z \text{ odd}, \ y \text{ even} \}.   \\
 \end{align*}
 
 Observe that in the definition of $\A_1(e,\alpha, \beta; N)$, the condition ``$x,y$ odd" is redundant, so we drop it from now on. 
 
\begin{lemma}\label{Parametrization-}
Let $\alpha \geq 1$, $\beta \geq g$, and $e= \pm 1$, and consider the map
\begin{align*}
    \Phi_e :\ &\SSS_{e} (\alpha, \beta; N) \longrightarrow \bigcup\limits_{i=1}^3 \A_i(e,\alpha, \beta; N), \quad  \Phi_e \bigg(\left( \begin{matrix} a & eb \\ c & ed \end{matrix}\right) \bigg) = (b, a, d). 
\end{align*}
This map is a bijection, and therefore
\begin{equation*}
    |\SSS_{e} (\alpha, \beta; N)|= |\A_1(e,\alpha, \beta; N)| + |\A_2(e,\alpha, \beta; N)| + |\A_3(e,\alpha, \beta; N)|+O(N).
\end{equation*}

\end{lemma}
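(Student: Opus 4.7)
My plan is to exhibit $\Phi_e$ as an injection whose image covers $\bigcup_{i=1}^3\A_i(e,\alpha,\beta;N)$ up to an $O(N)$ residue. The argument is essentially a dictionary between the matrix conditions defining $\SSS_e$ and the triple conditions defining the $\A_i$.

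For the forward inclusion, I would take $\sigma=\left(\begin{smallmatrix} a & eb \\ c & ed \end{smallmatrix}\right)\in\SSS_e(\alpha,\beta;N)$ and set $(z,x,y):=(b,a,d)$. The inequalities $b/d\geq\alpha$, $a/b\geq\beta$, and $a/b>g$ coming from $\mathcal{S}_e$ and $\SSS_e$ translate verbatim into $z\geq y\alpha$, $x\geq z\beta$, $x>gz$, while $\Tr\sigma=a+ed\leq N$ becomes $x+ey\leq N$. Since $\sigma\in\widetilde{\Gamma}$, one of $\sigma\equiv I,A,B\pmod 2$ must hold; a direct parity check on the four entries in each case matches exactly the parity prescriptions of $\A_1,\A_2,\A_3$ respectively. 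The determinant relation rewrites as $xy=zc+e$, giving $xy\equiv e\pmod z$, and in the $I$-case both $z$ and $c$ are even so this refines to $\pmod{2z}$. Injectivity of $\Phi_e$ is automatic once we note that $c=(xy-e)/z$ recovers the missing entry.

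For the reverse direction, given $(z,x,y)\in\A_i$ I would form $c:=(xy-e)/z$ (an integer by the congruence) and assemble $\sigma:=\left(\begin{smallmatrix} x & ez \\ c & ey \end{smallmatrix}\right)$. It has determinant $e$, its parity pattern places it in $\widetilde{\Gamma}$, and the inequalities $0\leq y\leq z$ (using $\alpha\geq 1$), $x>gz$, $x\geq z\beta$, the trace bound, and $c\leq x$ (equivalent to $x(z-y)\geq-e$) all transfer back directly. The only condition from $\mathcal{S}_e$ that can genuinely fail is $c\geq 1$.

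The hardest part will be quantifying the failure of $c\geq 1$ and confirming that no other one-parameter family of bad triples slips through. Writing $xy=e+kz$ with $k\in\mathbb{Z}_{\geq 0}$, the inequality $c\geq 1$ is equivalent to $k\geq 1$, so failure means $xy=e$. When $e=+1$ this collapses to $x=y=1$, and then $x>gz$ forces $z=1$, contributing a single exceptional triple. When $e=-1$ the equation $xy=-1$ admits no non-negative solution, so the only bad case is the degenerate boundary $y=0$, where the congruence forces $z=1$ and produces the one-parameter family $\{(1,x,0):x\leq N\}$ of size $O(N)$. Combining these contributions with the bijection on the complement yields the stated formula; the joint constraint $xy\equiv e\pmod z$ together with $x>gz$ is precisely what excludes any thicker exceptional set.
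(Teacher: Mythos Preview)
Your forward direction and injectivity are fine, but the reverse direction misidentifies the source of the $O(N)$ defect. You assert that $c\le x$, which you correctly rewrite as $x(z-y)\ge -e$, ``transfers back directly.'' For $e=-1$ this inequality reads $x(z-y)\ge 1$, and it \emph{fails} precisely when $y=z$. That is where the $O(N)$ actually comes from: if $y=z$ then the congruence $xy\equiv -1\pmod z$ forces $z\mid 1$, so $y=z=1$ and $c=(x+1)/1=x+1>x$, producing the family $\{(1,x,1):x\text{ even},\ x\le N+1\}\subset\A_2(-1,\alpha,\beta;N)$ of triples whose reconstructed matrix $\left(\begin{smallmatrix} x & -1\\ x+1 & -1\end{smallmatrix}\right)$ violates $c\le a$.

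Conversely, the family $\{(1,x,0)\}$ you flag for $e=-1$ is \emph{not} exceptional: with $y=0$ and $z=1$ one gets $c=(0+1)/1=1$, and the matrix $\left(\begin{smallmatrix} x & -1\\ 1 & 0\end{smallmatrix}\right)$ satisfies every condition of $\SSS_{-1}$. Likewise, for $e=+1$ your ``single exceptional triple'' $(1,1,1)$ is not in any $\A_i$ at all, since all three coordinates share the same parity and none of the three parity patterns allows that; so $\Phi_{+1}$ is in fact a genuine bijection with no exceptions. The upshot is that the $O(N)$ term arises solely from the failure of $c\le x$ in the case $e=-1$, not from any failure of $c\ge 1$.
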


\begin{proof}
Fix $e$. From the congruence relations in the definition of  $\SSS_{e} (\alpha, \beta; N)$ (cf. Lemma \ref{lemma P= S+ union S-}) it follows that this map is well-defined. Since $b,a$ and $d$ completely determine the elements of $\SSS_{e} (\alpha, \beta; N)$, 
 it is also one-to-one. Now take a triple $(z,x,y) \in \bigcup\limits_{i=1}^3 \A_i(e,\alpha, \beta; N)$, and define $c:= \frac{xy-e}{z}$. To show that the matrix $\sigma := \left( \begin{smallmatrix} x & ez \\ c & ey \end{smallmatrix}\right)$ belongs to $\SSS_{e} (\alpha, \beta; N)$, we need to check that $y \leq z$, that $\sigma \equiv I, A \text{ or } B \mod 2$, and that $1 \leq c \leq x$. The first inequality follows immediately from $y \alpha \leq z$ and $\alpha \geq 1$, while the congruence relations in the definition of the sets $\A_i(e,\alpha,\beta;N)$ give the congruence relations for $\sigma$. By definition of $c$  we have $c \geq 1 \Leftrightarrow xy-1>0 \Leftrightarrow xy>1$. The problematic case $x=y=1$ together with the conditions on $x,y,z$ in the definition of the $\A_i(e,\alpha,\beta;N)$ give that  $1 \leq \alpha \leq z \leq \frac{1}{g} < 2$, so that  $z=1$, which is a contradiction since all three of $x,y$ and $z$ can not have the same parity. Finally, for $c \leq  x $, if $e=+1$ we have $c \leq  x \Leftrightarrow x(y-z) \leq 1$, which is true because $y-z \leq 0$. Thus, in the case $e=+1$, the map $\Phi_e$ is a bijection. If $e=-1$, then $c \leq  x \Leftrightarrow x(y-z) \leq -1$, which is true when $y-z<0$, since $x>0$ by $x>gz$. The only case where $\sigma \not\in \SSS_e(\alpha, \beta;N)$ is when $y=z$. Then the definition of $c \in \mathbb{Z}$ and $e=-1$ imply that $y=z=1$, and $c=x+1$, so that $\sigma =  \left( \begin{smallmatrix} x & -1 \\ x+1 & -1 \end{smallmatrix}\right)$. By the condition $x+ey \leq N$ in the definition of the sets $\A_i(e,\alpha,\beta;N)$, we have that there are at most $N+1$ such preimages of $(z,x,y)$ under $\Phi_{-1}$ that do not belong to $ \SSS_e(\alpha, \beta;N)$. Thus the result follows.
\end{proof}

 \section{Computing the asymptotic behavior of the parametrization}\label{section computing asymptotic behavior}

Now we determine the asymptotic behavior of $|\A_i(e,\alpha, \beta;N)|$, $i=1,2,3$, $e = \pm 1$. For a set $\Omega \subset \mathbb{R}^2$, the number of lattice points on the modular hyperbola 
\begin{equation*}
    \mathcal{H}_{m,h}:=\{(x,y) \in \mathbb{Z}^2 \mid xy \equiv h \mod m\}
\end{equation*}
 lying inside $\Omega$ is 
\begin{equation*}
     \N_{m,h}(\Omega) := \sum\limits_{\substack{(x,y) \in \Omega \cap \mathcal{H}_{m,h}}} 1.
\end{equation*}
Denote $\N_{m,1}(\Omega)= \N_{m}(\Omega)$. Define
\begin{align}\label{regions omega}
    \Omega_m(e,\alpha,\beta;N) & := \{(x,y) \mid x \geq m\beta, \ 0 \leq y \leq m/\alpha, \ x+ey \leq N \}, 
\end{align}
so that in this notation we have 
{\small
\begin{equation}\label{eq def on A_i sets}
\begin{split}
 \A_1(e,\alpha, \beta; N) = \{ &(z,x,y) \in \mathbb{Z}^3_{\geq 0} \mid   z \text{ even}, \ (x,y) \in \Omega_z(e,\alpha,\beta;N)\cap  \mathcal{H}_{2z,e}, \ z \geq 2 \},  \\[1ex]
\A_2(e,\alpha, \beta; N) = \{ &(z,x,y) \in \mathbb{Z}^3_{\geq 0} \mid z,y \text{ odd}, \ x \text{ even}, \ (x,y) \in \Omega_z(e,\alpha,\beta;N) \cap \mathcal{H}_{z,e}, \ z\geq 1   \},   \\[1ex]
\A_3(e,\alpha, \beta; N) = \{ &(z,x,y) \in \mathbb{Z}^3_{\geq 0} \mid  x,z \text{ odd}, \ y \text{ even}, \ (x,y) \in  \Omega_z(e,\alpha,\beta;N) \cap \mathcal{H}_{z,e} , \ z\geq 1  \}.  
  \end{split}
 \end{equation}
}
 
To find asymptotics for the above sets, we will use the following Lemma:

\begin{lemma}\label{bocaustinov}
 Let $h,m$ be integers with $m \geq 2$ and $(h,m)=1$, and $\phi$ be Euler's totient function. Then
\begin{itemize}
    \item[(i)] For $I_1,I_2$ intervals, we have
\begin{equation*}
    \N_{m,h}(I_1 \times I_2)= \frac{\phi (m)}{m^2} |I_1||I_2| + O_\epsilon \bigg( m^{1/2+\epsilon} \Big(1+\frac{|I_1|}{m}\Big) \Big(1+\frac{|I_2|}{m}\Big)\bigg).
\end{equation*}
    \item[(ii)] For  every  integer $c$ and  interval $I$ with $|I|< m$, consider the linear function $f(x) =c \pm x$ such that $f(I) \subset [0,m]$. Then
 \begin{equation*}
     \N_{m,h}(\{(x,y) \mid x \in I, \ 0\leq y \leq f(x)\})= \frac{\phi (m)}{m^2} \int_I f(x) \ dx + O_\epsilon(m^{1/2+\epsilon}).
 \end{equation*}
 Moreover, the same conclusion holds if $f$ is of the form $f(x)= c \pm K x$, $K \in \mathbb{N}$, or if $c \notin \mathbb{Z}$. 
\end{itemize}
\end{lemma}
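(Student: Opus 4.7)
The plan is to handle (i) via the standard harmonic-analysis machinery on lattice points on modular hyperbolas, and to derive (ii) from a row-by-row reduction to (i) together with sawtooth bookkeeping.

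For (i), the starting point is the character expansion
\[
\mathbb{1}[xy\equiv h\pmod m]=\frac{1}{m}\sum_{k=0}^{m-1}e_m\bigl(k(xy-h)\bigr),\qquad e_m(u):=e^{2\pi iu/m}.
\]
Summed over $(x,y)\in I_1\times I_2$, the $k=0$ contribution is $|I_1||I_2|/m$; because $(h,m)=1$ forces $(x,m)=(y,m)=1$, this reorganizes into the predicted main term $\tfrac{\phi(m)}{m^2}|I_1||I_2|$. The remaining $k\neq 0$ terms, after the completion technique applied in each variable, reduce to incomplete Kloosterman sums $S(a,b;m)$, and Weil's bound $|S(a,b;m)|\ll_\epsilon (a,b,m)^{1/2}m^{1/2+\epsilon}$ contributes the stated error with factors $(1+|I_i|/m)$ coming from the tails of the completion.

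For (ii), because $f$ is monotone linear on $I$, for each integer $y_0\in[0,\max_I f]$ the horizontal slice
\[
I_{y_0}:=\{x\in I:f(x)\ge y_0\}
\]
is a sub-interval of $I$, and the region decomposes as the disjoint union $\bigsqcup_{y_0}(I_{y_0}\times\{y_0\})$. Hence
\[
\N_{m,h}\bigl(\{(x,y):x\in I,\,0\le y\le f(x)\}\bigr)=\sum_{y_0=0}^{\lfloor \max_I f\rfloor}\#\{x\in I_{y_0}:xy_0\equiv h\pmod m\}.
\]
For each $y_0$ with $(y_0,m)=1$ the inner count equals the number of $x\in I_{y_0}$ with $x\equiv h\,\overline{y_0}\pmod m$; a one-variable Fourier expansion writes this as $|I_{y_0}|/m+E(y_0)$. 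Summing the main terms telescopes to $\tfrac{\phi(m)}{m^2}\int_I f(x)\,dx$ up to acceptable error, while the $E(y_0)$, regrouped by the Fourier frequency $k$ dual to $x$, yield Kloosterman sums in $y_0$ controlled by Weil's bound.

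The main technical obstacle is that when the slope $K$ exceeds $1$ or the intercept $c$ is not an integer, the endpoints of $I_{y_0}$ are not integers and shift with $y_0$ in a non-uniform way, producing sawtooth corrections that are not present in Ustinov's original setting. To address this I would Fourier-expand the sawtooth $\psi(t)=t-\lfloor t\rfloor-\tfrac12$ at each endpoint of $I_{y_0}$, introducing a second frequency $\ell$ dual to $x$. The resulting triple sum over $(k,\ell,y_0)$ reorganizes into Kloosterman-type sums in $y_0$ weighted by coefficients decaying in $k$ and $\ell$; the hypotheses $|I|<m$ and $f(I)\subset[0,m]$ keep each sawtooth of bounded amplitude, so Weil's bound together with $\sum_{k=1}^{m-1}1/|\sin(\pi k/m)|\ll m\log m$ absorb the logarithmic losses into the $m^\epsilon$ factor, yielding the claimed error $O_\epsilon(m^{1/2+\epsilon})$. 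The generalization to $f(x)=c\pm Kx$ or non-integer $c$ costs only bounded constants in the sawtooth amplitudes, so the same estimate carries through.
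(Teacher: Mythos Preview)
Your treatment of part~(i) is the standard one and the paper simply cites the literature for it, so there is nothing to compare there.

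For part~(ii) your route genuinely differs from the paper's. The paper does \emph{not} slice the region; it Fourier--expands the indicator $\mathbbm{1}_{\mathcal T}$ of the whole triangle on $(\mathbb Z/q\mathbb Z)^2$ and writes $\mathcal N_{q,h}(\mathcal T)=\sum_{m,n}\widehat F(m,n)\,K_{q,h}(m,n)$. The coefficients $\widehat F(m,n)$ are computed by first summing the geometric series in $y$ over $0<y\le c+Kx$ and then in $x$; the slope $K$ is absorbed cleanly as a shift of the $x$--frequency, $m\mapsto m+nK$, and the case analysis is according to whether $q\mid m+nK$. No sawtooth expansion is needed, and non-integer $c$ is harmless because $\{c+Kx\}=\{c\}$ is constant in $x$.

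Your horizontal slicing, by contrast, forces you to invert $f$: the moving endpoint of $I_{y_0}$ is $f^{-1}(y_0)=(y_0-c)/K$, so the sawtooth you must expand is $\psi\bigl((y_0-c)/(Km)-h\overline{y_0}/m\bigr)$. After Fourier expansion in the sawtooth frequency $k$, the sum over $y_0$ becomes
\[
\sum_{\substack{y_0\in J\\(y_0,m)=1}} e\!\Bigl(\frac{k\,y_0}{Km}-\frac{kh\,\overline{y_0}}{m}\Bigr),
\]
and for $K\nmid k$ this is \emph{not} a Kloosterman sum to modulus $m$: the linear part lives mod $Km$ while the inverse part lives mod $m$. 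Weil's bound does not apply directly, and your assertion that ``the generalization \dots\ costs only bounded constants in the sawtooth amplitudes'' misidentifies the difficulty---the problem is not the amplitude but the modular structure of the resulting sums. A partial--summation attempt to peel off $e(ky_0/(Km))$ loses a factor of $k$, which the $1/k$ sawtooth weight does not compensate after summing $k$ up to $\sim m$.

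The fix is easy but it changes your argument: slice \emph{vertically} (fix $x\in I$, count $y\in[0,f(x)]$ with $y\equiv h\overline{x}\pmod m$). Then the relevant sawtooth is $\psi\bigl((c+Kx-h\overline{x})/m\bigr)$, whose Fourier expansion gives honest incomplete Kloosterman sums $\sum_{x}e_m(kKx-kh\overline{x})$ with both frequencies to modulus $m$; Weil applies and the rest of your outline goes through. This is essentially what the paper's inner $y$--summation accomplishes inside the $2$D Fourier computation.
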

For part (i), see e.g. Proposition A3 in \cite{BZ}, or \cite{Ustinov3}. Part (ii) is an extension of Lemma 2 in  \cite{Ustinov} (see Appendix 2 below). 

We start with the case $e=+1$. Fix $z=m$ and determine what $\Omega_m(+1,\alpha,\beta;N)$ looks like (cf. (\ref{regions omega})): first draw the line $x+y=N$, and observe that $ m \beta \geq N \implies x \geq N $, so that  $\Omega_m(+1,\alpha,\beta;N) = \emptyset$. Thus $m \leq \frac{N}{\beta}$. 
 
 Since $m\beta \leq x \leq N$ and $0 \leq y \leq \frac{m}{\alpha}$, when $\frac{m}{\alpha} \leq N-m\beta \Leftrightarrow m \leq \frac{\alpha N}{\alpha \beta +1}$, then $\Omega_m(+1,\alpha,\beta;N)$ is a trapezoid with bases given by the $x$-axis and $y=\frac{m}{\alpha}$, and sides $x=m\beta$ and $y=N-x$. When $\frac{m}{\alpha} \geq N-m\beta \Leftrightarrow m \geq \frac{\alpha N}{\alpha \beta +1} $, $\Omega_m(+1,\alpha,\beta;N)$ is a triangle with sides given by the $x$-axis, $x=m\beta$ and $y=N-x$. 
 
Therefore we have (cf. (\ref{eq def on A_i sets}))
\begin{align}\label{A_1initial}
    |\A_1(+1,\alpha,\beta;N)|= \sum\limits_{\substack{1 \leq m \leq \frac{\alpha N}{\alpha \beta +1} \\ m \text{ even}}} \N_{2m}(trapezoid) + \sum\limits_{\substack{\frac{\alpha N}{\alpha \beta +1} \leq m \leq \frac{N}{\beta} \\ m \text{ even}}} \N_{2m}(triangle).
\end{align}

 For $1 \leq m \leq \frac{\alpha N}{\alpha \beta +1}$, divide the trapezoid into a rectangle and a triangle, and apply Lemma \ref{bocaustinov}, part (i) with $I_1\times I_2= [\frac{m}{\beta}, N-\frac{m}{\alpha}] \times [0,\frac{m}{\alpha}]$ (for the rectangle) and Lemma \ref{bocaustinov}, part (ii), with $I= [N-\frac{m}{\alpha},N ]$, with $|I| = \frac{m}{\alpha} < m$, and $f(x)= N-x$ (for the triangle), to get 
 
 \begin{align}\label{Ntrapezoid}
     \N_{2m} (trapezoid) = \ &\frac{\phi (2m)}{(2m)^2} \cdot \Big( N-\frac{m}{\alpha}- m\beta \Big) \cdot \frac{m}{\alpha} \ + \ \frac{\phi (2m)}{(2m)^2} \cdot \frac{m^2}{2\alpha^2} \ + \ O_\epsilon\big(m^{-\frac{1}{2}+\epsilon}N\big).
 \end{align}
 
 For $\frac{\alpha N}{\alpha \beta +1} \leq m \leq \frac{N}{\beta}$, use Lemma \ref{bocaustinov}, part (ii), with $I= [m\beta , N]$, with $|I|= N-m\beta <m \Leftrightarrow m \geq \frac{\alpha N}{\alpha \beta +1}$, and $f(x)= N-x$ to get
 
 \begin{align}\label{Ntriangle}
     \N_{2m} (triangle) = \ &\frac{\phi (2m)}{(2m)^2} \cdot \frac{(N-m\beta)^2}{2} \ +O_\epsilon\big(m^{\frac{1}{2}+\epsilon}\big).
 \end{align}
 Now use (\ref{A_1initial}), (\ref{Ntrapezoid}) and (\ref{Ntriangle}) to get 
 \begin{align}\label{A_1withN}
  \nonumber   |\A_1(+1,\alpha,\beta;N)| & = \sum\limits_{\substack{1 \leq m \leq \frac{\alpha N}{\alpha \beta +1} \\ m \text{ even}}} \Bigg( \frac{\phi (2m)}{(2m)^2} \cdot \Big( N-\frac{m}{\alpha}- m\beta \Big) \cdot \frac{m}{\alpha} \ + \ \frac{\phi (2m)}{(2m)^2} \cdot \frac{m^2}{2\alpha^2} \Bigg) \\ \nonumber
     &+ \sum\limits_{\substack{\frac{\alpha N}{\alpha \beta +1} \leq m \leq \frac{N}{\beta} \\ m \text{ even}}} \Bigg( \frac{\phi (2m)}{(2m)^2} \cdot \frac{(N-m\beta)^2}{2} \Bigg) + O_\epsilon \big(N^{\frac{3}{2}+\epsilon} \big) \\ \nonumber
    & =  \frac{N}{4\alpha}\sum\limits_{\substack{1 \leq m \leq \frac{\alpha N}{\alpha \beta +1} \\ m \text{ even}}} \frac{\phi(2m)}{m}\  - \ \frac{1+2\alpha\beta}{8\alpha^2}\sum\limits_{\substack{1 \leq m \leq \frac{\alpha N}{\alpha \beta +1} \\ m \text{ even}}} \phi(2m) \ + \ \frac{N^2}{8} \sum\limits_{\substack{\frac{\alpha N}{\alpha \beta +1} \leq m \leq \frac{N}{\beta} \\ m \text{ even}}} \frac{\phi(2m)}{m^2}\\ \nonumber
     & -\frac{\beta N}{4} \sum\limits_{\substack{\frac{\alpha N}{\alpha \beta +1} \leq m \leq \frac{N}{\beta} \\ m \text{ even}}} \frac{\phi(2m)}{m} \ + \ \frac{\beta^2}{8} \sum\limits_{\substack{\frac{\alpha N}{\alpha \beta +1} \leq m \leq \frac{N}{\beta} \\ m \text{ even}}}\phi(2m) \ +  O_\epsilon \big(N^{\frac{3}{2}+\epsilon} \big).
     \end{align}
     
     We also use the known estimates:
     
     \begin{align*}
        & \sum\limits_{\substack{1\leq m \leq N\\ m \text{ even}}}\frac{\phi(2m)}{m}  = \sum\limits_{\substack{m \leq \frac{N}{2}}} \frac{\phi(4m)}{2m}= \frac{2N}{3\zeta (2)} + O(\log^2N) \qquad \text{(Lemma 2.2 in \cite{BG})} \\
         &\sum\limits_{\substack{1\leq m \leq N\\ m \text{ even}}} \phi(2m) = \sum\limits_{m \leq \frac{N}{2}} \phi(4m) = \frac{N^2}{3\zeta(2)} + O(N \log N) \qquad  \text{(Lemma 2.2 in \cite{BG})}\\
        & \sum\limits_{\substack{1\leq m \leq N\\ m \text{ even}}}\frac{\phi(2m)}{m^2}  = \frac{2}{3\zeta(2)} \Big( \log N + \gamma -\frac{4 \log 2}{3}- \frac{\zeta'(2)}{\zeta(2)} \Big) + O(N^{-1} \log^2 N) \qquad \text{(Section 5 in \cite{BS})}, \\
        & \text{which gives, for every $s<1$,} \sum\limits_{\substack{sN\leq m \leq N\\ m \text{ even}}}\frac{\phi(2m)}{m^2} = -\frac{2 \log s}{3\zeta(2)} + O(N^{-1} \log^2 N).
     \end{align*}
     Observe that $\frac{\alpha}{\alpha\beta+1}<1$: if $\beta>1$, then it is obvious, while if $\beta \in (\frac{g}{g^2+1}, G)$, then the function $x \mapsto \frac{\alpha}{\alpha x+1}$ is decreasing, so it achieves its maximum at $\frac{g}{g^2+1}$, and the function $x \mapsto \frac{x}{1+xg/(g^2+1)}$ is increasing and tends to $1$ at infinity, while at $x=1$ it is $<1$. Using the above estimates we get
     
     \begin{equation}\label{A_1final}
     |\A_1(+1,\alpha,\beta;N)| = \frac{N^2}{12\zeta(2)}\cdot \log
     \frac{\alpha\beta+1}{\alpha\beta} + O_\epsilon(N^{\frac{3}{2}+\epsilon}).
     \end{equation}
     
     Now, for $\A_2(+1,\alpha,\beta;N)$, things get more complicated because we have additional restrictions on the parity of $x$ 
      and $y$ (cf. (\ref{eq def on A_i sets})):
      \begin{equation*}
          |\A_2(+1,\alpha,\beta;N)|= \sum\limits_{\substack{m \geq 1 \\ m \text{ odd}}}  \ \sum\limits_{\substack{(x,y) \in \Omega_m(+1,\alpha,\beta;N)\cap \mathcal{H}_{m,1}\\  x \text{ even,} \ y \text{ odd}}}1,
      \end{equation*}
     while  Lemma \ref{bocaustinov} does not take the parity of $x$ and $y$ into consideration. Therefore, we need to define the auxiliary sets $\Omega'_m(+1,\alpha,\beta;N)$: We have that $xy \equiv 1 \mod m \Leftrightarrow 2 \cdot \frac{x}{2} \cdot y \equiv 1 \mod m \Leftrightarrow \frac{x}{2} \cdot y \equiv \overline{2} \mod m$, where $\overline{2}\cdot 2 \equiv 1 \mod m$. Define
     \begin{equation}\label{eq def Omega prime}
         \Omega'_m(+1,\alpha,\beta;N):= \Big \{(x,y) \mid x \geq \frac{m \beta}{2}, \ 0 \leq y \leq \frac{m}{\alpha}, \ 2x+y \leq N \Big\},
     \end{equation}
     so that $(x,y) \in \Omega_m(+1,\alpha,\beta;N) \Leftrightarrow (\frac{x}{2},y) \in \Omega'_m(+1,\alpha,\beta;N)$. Therefore, we can write
     
   \begin{align*}
          |\A_2(+1,\alpha,\beta;N)| & = \sum\limits_{\substack{m \geq 1 \\ m \text{ odd}}} \sum\limits_{\substack{(x,y) \in \Omega'_m(+1,\alpha,\beta;N)\\ xy \equiv \overline{2} \mod m\\  y \text{ odd}}}1 \\
          & = \sum\limits_{\substack{m \geq 1 \\ m \text{ odd}}} \sum\limits_{\substack{(x,y) \in \Omega'_m(+1,\alpha,\beta;N)\\ xy \equiv \overline{2} \mod m}}1 - \sum\limits_{\substack{m \geq 1 \\ m \text{ odd}}} \sum\limits_{\substack{(x,y) \in \Omega'_m(+1,\alpha,\beta;N)\\ xy \equiv \overline{2} \mod m\\  y \text{ even}}}1.
      \end{align*}  
      
      The above sums are finite, because $\Omega'_m(+1,\alpha,\beta;N)$ can be shown to be a bounded region in the same way as $\Omega_m(+1,\alpha,\beta;N)$. Now we have $xy \equiv \overline{2} \mod m \Leftrightarrow x \cdot \frac{y}{2} \cdot 2 \equiv \overline{2} \mod m \Leftrightarrow x \cdot \frac{y}{2} \equiv \overline{2} \cdot \overline{2} \mod m$, and that $(x,y) \in \Omega'_m(+1,\alpha,\beta;N) \Leftrightarrow (x,\frac{y}{2}) \in \Omega_m(+1,2\alpha,\frac{\beta}{2};\frac{N}{2})$. Therefore
      
       \begin{align}\label{A_2initial}
          |\A_2(+1,\alpha,\beta;N)| = \sum\limits_{\substack{m \geq 1 \\ m \text{ odd}}} \sum\limits_{\substack{(x,y) \in \Omega'_m(+1,\alpha,\beta;N)\\ xy \equiv \overline{2} \mod m}}1 \ -  \ \sum\limits_{\substack{m \geq 1 \\ m \text{ odd}}} \sum\limits_{\substack{(x,y) \in \Omega_m(+1,2\alpha,\beta/2;N/2)\\ xy \equiv \overline{2} \cdot \overline{2} \mod m}}1.
      \end{align}  
     
     Now we can apply Lemma \ref{bocaustinov}: starting with $\Omega'_m(+1,\alpha,\beta;N)$ (cf. (\ref{eq def Omega prime})), we draw the line $2x+y = N$, and observe that, since $x \geq \frac{m}{\beta_2}$, $\Omega'_m(+1,\alpha,\beta;N)= \emptyset \Leftrightarrow m \geq \frac{N}{\beta}$. Thus $m \leq \frac{N}{\beta}$.  
     
     Similar to the previous case of $\Omega_m(+1,\alpha,\beta;N)$, here $\Omega'_m(\alpha,\beta;N)$ is a trapezoid of area $\big(\frac{N}{2}- \frac{m}{2\alpha}-\frac{m\beta}{2}\big) \frac{m}{\alpha} + \frac{m^2}{4\alpha^2}$ when $m \leq \frac{\alpha N}{\alpha\beta +1}$, and a triangle of area $\frac{(N-m\beta)^2}{4}$ when $\frac{\alpha N}{\alpha\beta +1} \leq m \leq \frac{N}{\beta}$. Accordingly, $\Omega_m(+1,2\alpha,\frac{\beta}{2};\frac{N}{2})$ is a trapezoid of area $\big(\frac{N}{2}- \frac{m}{2\alpha}-\frac{m\beta}{2}\big) \frac{m}{2\alpha} + \frac{m^2}{8\alpha^2}$ when $m \leq \frac{\alpha N}{\alpha\beta +1}$, and a triangle of area $\frac{(N-m\beta)^2}{8}$ when $\frac{\alpha N}{\alpha\beta +1} \leq m \leq \frac{N}{\beta}$.

 Observe that in (\ref{A_2initial}), we have that, in the notation of Lemma \ref{bocaustinov}, $h=\overline{2}$ or $\overline{2}\cdot \overline{2}$, where $(\overline{2},m)= (\overline{2}\cdot \overline{2},m)=1$. 
 
 For the first sum in (\ref{A_2initial}): for $1 \leq m \leq \frac{\alpha N}{\alpha \beta +1}$ apply Lemma \ref{bocaustinov}, part (i), with $I_1\times I_2= [\frac{m\beta}{2}, \frac{N}{2}-\frac{m}{2\alpha}] \times [0,\frac{m}{\alpha}]$ (for the rectangle), and Lemma \ref{bocaustinov}, part (ii), with $I=[ \frac{N}{2}-\frac{m}{2 \alpha},\frac{N}{2}]$, of length $|I|= \frac{m}{2\alpha} <m$, and $f(x)= N-2x$ (for the triangle). For $\frac{\alpha N}{\alpha \beta +1} \leq m \leq \frac{N}{\beta}$ apply Lemma \ref{bocaustinov}, part (ii), with $I=[\frac{m\beta}{2}, \frac{N}{2}]$, with length $|I|= \frac{N-m\beta}{2}$, and $f(x)= N-2x$, whose maximum is $N-m\beta$. To do this, we need to prove that $|I|< m$, and $f(x)< m$, for all $x \in I$. But this follows from $m \geq \frac{\alpha N}{\alpha \beta +1}= \frac{N}{\beta + \frac{1}{\alpha}} \geq \frac{N}{\beta +1}$. Therefore, the main term in the first sum in (\ref{A_2initial}) is equal to
 
 \begin{equation}\label{firstsum}
  \sum\limits_{\substack{ 1\leq m \leq \frac{\alpha N}{\alpha \beta +1} \\ m \text{ odd}}} \frac{\phi(m)}{m^2}\Big[ \Big( \frac{N}{2}- \frac{m}{2\alpha}-\frac{m\beta}{2}\Big) \cdot \frac{m}{\alpha} + \frac{m^2}{4\alpha^2}\Big] + \sum\limits_{\substack{ \frac{\alpha N}{\alpha \beta +1} \leq m \leq \frac{N}{\beta} \\ m \text{ odd}}} \frac{\phi(m)}{m^2} \cdot \frac{(N-m \beta)^2}{4},
 \end{equation}
     with an error of $O_\epsilon(N^{3/2+\epsilon})$.
    
     For the second sum in (\ref{A_2initial}): for $1 \leq m \leq \frac{\alpha N}{\alpha \beta +1}$ apply Lemma \ref{bocaustinov}, part (i), with $I_1\times I_2= [\frac{m\beta}{2}, \frac{N}{2}-\frac{m}{2\alpha}] \times [0,\frac{m}{2\alpha}]$ (for the rectangle), and Lemma \ref{bocaustinov}, part (ii), with $I=[ \frac{N}{2}-\frac{m}{ 2\alpha},\frac{N}{2}]$, of length $|I|= \frac{m}{2\alpha} <m$, and $f(x)= N-2x$ (for the triangle). For $\frac{\alpha N}{\alpha \beta +1} \leq m \leq \frac{N}{\beta}$, apply Lemma \ref{bocaustinov}, part (ii), with $I=[\frac{m \beta}{2},\frac{N}{2}]$ and $f(x)= \frac{N}{2}-x$. Therefore, the main term in the second sum in (\ref{A_2initial}) is equal to
     
     \begin{align}\label{secondsum}
      \sum\limits_{\substack{ 1\leq m \leq \frac{\alpha N}{\alpha \beta +1} \\ m \text{ odd}}} \frac{\phi(m)}{m^2}\Big[ \Big( \frac{N}{2}- \frac{m}{2\alpha}-\frac{m\beta}{2}\Big) \cdot \frac{m}{2\alpha} + \frac{m^2}{8\alpha^2}\Big]  + \sum\limits_{\substack{ \frac{\alpha N}{\alpha \beta +1} \leq m \leq \frac{N}{\beta} \\ m \text{ odd}}} \frac{\phi(m)}{m^2} \cdot \frac{(N-m \beta)^2}{8},
     \end{align}
      with an error of $O_\epsilon(N^{3/2+\epsilon})$.

    Combining the relations (\ref{A_2initial}),(\ref{firstsum}) and (\ref{secondsum}) we get  
    \begin{align}\label{A_2middle}
  \nonumber  |\A_2(+1,\alpha,\beta;N)| & = \sum\limits_{\substack{ 1\leq m \leq \frac{\alpha N}{\alpha \beta +1} \\ m \text{ odd}}} \frac{\phi(m)}{m^2} \cdot \bigg( \frac{Nm}{4\alpha} - \frac{(1+2\alpha \beta)m^2}{8\alpha^2}\bigg) \\
 \nonumber   & + \sum\limits_{\substack{ \frac{\alpha N}{\alpha \beta +1} \leq m \leq \frac{N}{\beta} \\ m \text{ odd}}} \frac{\phi(m)}{m^2} \cdot \frac{(N-m \beta)^2}{8} \ + \ O_\epsilon(N^{\frac{3}{2}+\epsilon}) \\ \nonumber
       & = \frac{N}{4 \alpha} \sum\limits_{\substack{ 1\leq m \leq \frac{\alpha N}{\alpha \beta +1} \\ m \text{ odd}}} \frac{\phi(m)}{m} -\frac{1+2\alpha \beta}{8\alpha^2}\sum\limits_{\substack{ 1\leq m \leq \frac{\alpha N}{\alpha \beta +1} \\ m \text{ odd}}} \phi(m) + \frac{N^2}{8} \sum\limits_{\substack{ \frac{\alpha N}{\alpha \beta +1} \leq m \leq \frac{N}{\beta} \\ m \text{ odd}}} \frac{\phi(m)}{m^2} \\ \nonumber
       & - \frac{\beta N}{4} \sum\limits_{\substack{ \frac{\alpha N}{\alpha \beta +1} \leq m \leq \frac{N}{\beta} \\ m \text{ odd}}}\frac{\phi(m)}{m} + \frac{\beta^2}{8} \sum\limits_{\substack{ \frac{\alpha N}{\alpha \beta +1} \leq m \leq \frac{N}{\beta} \\ m \text{ odd}}} \phi(m) \ + \  O_\epsilon(N^{\frac{3}{2}+\epsilon}). 
    \end{align}  
 
 The above equality together with the estimates 
 \begin{align*}
          \sum\limits_{\substack{1 \leq m \leq N \\ m \text{ odd}}} \phi(m) & = \frac{N^2}{3\zeta(2)} + O(N \log N) \qquad \text{(Lemma 2.2 in \cite{BG})} \\
          \sum\limits_{\substack{1 \leq m \leq N \\ m \text{ odd}}} \frac{\phi(m)}{m} & = \frac{2N}{3\zeta(2)} + O(N \log N) \qquad \text{(Lemma 2.2 in \cite{BG})} \\
          \sum\limits_{\substack{1 \leq m \leq N \\ m \text{ odd}}} \frac{\phi(m)}{m^2} & = \frac{2}{3\zeta(2)} \bigg( \log N + \gamma + \frac{2 \log 2}{3} -\frac{\zeta'(2)}{\zeta(2)} \bigg)  +O(N^{-1} \log^2 N) \qquad \text{(Section 5 in \cite{BS})},
      \end{align*}
 
 gives that 
 \begin{equation}\label{A_2final}
     |\A_2(+1,\alpha,\beta;N)| = \frac{N^2}{12\zeta(2)} \cdot \log \frac{\alpha \beta +1}{\alpha \beta}  \ + \  O_\epsilon(N^{\frac{3}{2}+\epsilon}).
 \end{equation}

 Finally, for $\A_3(+1,\alpha,\beta;N)$ (cf. (\ref{eq def on A_i sets})), set $\Omega''_m(+1,\alpha, \beta ;N):= \{(x,y) \mid x \geq m\beta, \ 0 \leq y \leq \frac{m}{2\alpha}, \ x+2y \leq N\}$. Straightforward calculations give that

 \begin{equation}\label{A_3final}
     |\A_3(+1,\alpha,\beta;N)|=  \frac{N^2}{12\zeta(2)} \cdot\log \frac{\alpha \beta +1}{\alpha \beta}  \ + \  O_\epsilon(N^{\frac{3}{2}+\epsilon}).
 \end{equation}
 
  Now, Lemma \ref{Parametrization-} together with (\ref{A_1final}), (\ref{A_2final}) and (\ref{A_3final}) give that
  
  \begin{equation}\label{S+estimate}
      |\SSS_{+1} (\alpha, \beta; N)|= \frac{N^2}{4\zeta(2)} \cdot \log \frac{\alpha\beta+1}{\alpha \beta}  \ + \ O_\epsilon(N^{\frac{3}{2}+ \epsilon}).
  \end{equation}
 For the case $e=-1$, fix $z=m$ and observe that $\Omega_m(-1,\alpha,\beta;N)$ is a trapezoid of area $(N-m\beta )  \frac{m}{\alpha}+ + \frac{m^2}{2\alpha^2}$ when $m \leq \frac{N}{\beta}$, a triangle of area $\frac{1}{2}\big(N+ \frac{m}{\alpha}-m \beta \big)^2$ when $\frac{N}{\beta} \leq m \leq \frac{\alpha N}{\alpha \beta -1}$, and is empty when $m\geq \frac{\alpha N}{\alpha\beta-1}$.

 Using Lemma \ref{bocaustinov} and the number theoretical estimates as before, we get 
 
 \begin{align}\label{A1(-1)}
 |\A_1(-1,\alpha, \beta;N)| & = \frac{N^2}{12 \zeta(2)} \cdot  \log \frac{\alpha \beta}{\alpha \beta -1} \ +O_\epsilon(N^{\frac{3}{2}+ \epsilon}). 
 \end{align}

For $i=2,3$, we proceed in the same way as we did for the case $e=+1$ to get 
\begin{equation}\label{A2and3(-1)}
   |\A_i(-1,\alpha, \beta;N)|=  \frac{N^2}{12 \zeta(2)} \cdot  \log \frac{\alpha \beta}{\alpha \beta -1} \ +O_\epsilon(N^{\frac{3}{2}+ \epsilon}). 
\end{equation}
 Combining Lemma \ref{Parametrization-} together with (\ref{A1(-1)}) and (\ref{A2and3(-1)}) we get that 
\begin{equation}\label{S(-1)}
|\SSS_{-1}(\alpha, \beta ; N)| = \frac{N^2}{4 \zeta(2)} \log \frac{\alpha \beta}{\alpha \beta -1}  \ + O_\epsilon(N^{\frac{3}{2}+ \epsilon}).
\end{equation}
 
 Finally, combining Lemmas \ref{lemmaW=S+S} and \ref{lemmaAasymptotics} with (\ref{S(-1)}) and (\ref{S+estimate}) and taking $N=\lfloor e^{\frac{R}{2}}\rfloor$, we get Theorem \ref{thm equidist of OCF QI}.

 \section{Appendix 1}\label{Appendix various reduc}
 In this section we will illustrate the difference between various types of reduced QIs. If $*$ is any of the letters in \{R,E,O,B\}, then a QI $\omega$ is called a $*$-reduced QI if it has a purely periodic $*$CF-expansion. It has been shown \cite{BM, Daj4, Ono, Pa} that 
 \begin{proposition}\label{proposition defn of R's} A QI $\omega >1$ is a
 \begin{itemize}
 \item[(i)]  R-reduced (or regular reduced) QI if and only if $\omega^* \in [-1,0]$.
 \item[(ii)]  E-reduced  QI if and only if $\omega^* \in [-1,1]$.
 \item[(iii)] B-reduced  QI if and only if $\omega^* \in [0,1]$.
 \item[(iv)] O-reduced QI if and only if $\omega^* \in [-G, 2-G]$.
 \end{itemize}
 \end{proposition}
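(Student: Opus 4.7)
The plan is to treat each of the four parts separately, using a common framework built around the natural extension of the corresponding Gauss map. For each continued fraction type~$*$ (R, E, B, O), the Gauss map $T_*$ admits a natural extension $\widetilde{T}_*$ on a two-dimensional domain of the form $X_* \times Y_*$, and the key observation is that the interval $Y_*$ appearing in each part is exactly the projection of this domain onto the second coordinate. For case (iv) this is explicit in Section~\ref{section OCF}, where $Y_O = [G-2, G]$; for cases (i)--(iii) the corresponding $Y_*$ intervals arise from the well-known natural extensions of the RCF, ECF, and BCF maps discussed in \cite{Ono, BM, Daj4, Pa}.

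For the forward direction of each part, I would assume that $\omega > 1$ has a purely periodic $*$CF expansion and invoke the Galois-type formula relating $\omega^*$ to the reversed expansion of the same type. In case (i), this is the classical formula $\omega^* = -1/[\,\overline{a_n,\ldots,a_1}\,]$ with $[\,\overline{a_n,\ldots,a_1}\,] > 1$, yielding $\omega^* \in [-1,0]$. In case (iv), this is the content of Proposition~\ref{proposition BM} combined with the fact that the grotesque continued fraction takes values in $[G-2, G]$. Cases (ii) and (iii) are handled analogously by evaluating the Galois-type formulas for the ECF and BCF.

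For the converse direction of each part, I would assume $\omega^*$ lies in the stated interval. By Lagrange's theorem, $\omega$ admits an eventually periodic $*$CF expansion, which I write in the form $[\,(b_1, f_1), \ldots, (b_k, f_k), \overline{(a_1, e_1), \ldots, (a_n, e_n)}\,]$, and I must show that the pre-period length $k$ is zero. The condition on $\omega^*$ places the pair $(\omega, \omega^*)$ (or $(\omega, -1/\omega^*)$, as appropriate) inside the domain $X_* \times Y_*$ of the natural extension. Because $\widetilde{T}_*$ is a bijection of its domain, and because the periodic tail forces the first coordinate of $\widetilde{T}_*^n(\omega, \omega^*)$ to return to $\omega$, the second coordinate returns as well. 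Backward iteration then shows that the pre-period digits $(b_i, f_i)$ must coincide with the appropriate cyclic shifts of the periodic digits, forcing $k = 0$.

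The only case requiring genuine care is (iv), because of the asymmetry of $[-G, 2-G]$ and the presence of signed digits $e_i = \pm 1$; this is precisely the combination of Proposition~\ref{proposition BM} with the description of $\widetilde{T}_o$ on $[1,\infty) \times [G-2, G]$ from Section~\ref{section OCF}. The remaining three cases are classical and can be quoted directly from the references cited.
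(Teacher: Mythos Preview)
The paper does not supply its own proof of this proposition: it is stated in Appendix~1 with the preamble ``It has been shown \cite{BM, Daj4, Ono, Pa} that'', and no argument follows. So there is no in-paper proof to compare against; the result is simply quoted from the literature.

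Your sketch is broadly in line with the arguments found in those references (in particular, Panti's general Lagrange theorem \cite{Pa} is organised around exactly the natural-extension picture you describe, and \cite{BM} carries out case~(iv) explicitly). The forward direction via the Galois-type formula is correct and essentially complete as you state it. For the converse, your outline is right in spirit but the crucial step is not quite the one you emphasise: the point is that conjugation commutes with the M\"obius action, so if $\omega$ has pre-period $k$ then $(T_*^k\omega)^*$ is obtained from $\omega^*$ by the \emph{same} M\"obius transformation that sends $\omega$ to $T_*^k\omega$. This places the pair $(T_*^k\omega,\, -(T_*^k\omega)^*)$ (or the appropriate variant) in the domain of $\widetilde{T}_*$, and then bijectivity of $\widetilde{T}_*$ lets you pull back step by step, showing that each pre-period digit is forced to agree with the corresponding periodic digit. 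Your phrase ``the second coordinate returns as well'' hides this, since a priori the second coordinate you start from is $-\omega^*$, not the value produced by iterating the natural extension forward from some canonical starting point; one has to check these coincide, and that is exactly where the conjugation-commutes-with-M\"obius observation enters.
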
 
 In what follows we denote 
 \begin{equation*}
       [ \, (a_1,e_1), (a_2,e_2), (a_3,e_3),\ldots \,]= a_1 + \cfrac{e_1}{a_2+ \cfrac{e_2}{a_3 + \ddots}},
 \end{equation*}
 where $a_i \in \mathbb{N}$. 
 \begin{Ex1*} Let $\omega = [\,\overline{(a,+1)}\,]= a+\frac{1}{\omega}$, where $a \geq1$. The minimal polynomial of $\omega$ is $\omega^2 - a \omega -1$, so that
 \begin{equation*}
     \omega = \frac{a+ \sqrt{a^2+4}}{2}>1 \quad \text{and } \quad \omega^*= \frac{a- \sqrt{a^2+4}}{2}.
 \end{equation*}
 Observe that for all $\alpha \geq 1$ we have $-1 < \omega^* < 0$. Therefore, such a QI $\omega$ is regular-, E-, and O-reduced, but not B-reduced. When $a$ is odd, then the OCF-expansion of $\omega$ is $[\,\overline{(a,+1)}\,]$. By Remark \ref{remark finding eta}, the largest eigenvalue of $\widetilde{\Omega}(\omega) = \left( \begin{smallmatrix} a & 1 \\ 1 & 0 \end{smallmatrix}\right)^2=\left( \begin{smallmatrix} a^2+1 & a \\ a & 1 \end{smallmatrix}\right) $ is $a \omega +1= \omega^2$, so that $\varrho_o(\omega)= 4  \log \omega$. When $a$ is even, we use the insertion algorithm based on the identity 
 \begin{equation}\label{insertion identity}
     A+\cfrac{\varepsilon}{B+\xi}= A+\varepsilon+\cfrac{-\varepsilon}{1+\cfrac{1}{B-1+\xi}},
 \end{equation}
 where $A,B \in \mathbb{N}$, $\xi \in [-1,1]$, and $\varepsilon = \pm1$, to obtain the OCF-expansion of $\omega$, which turns out to be $[\,\overline{(a+1,-1),(1,+1),(a-1,+1)}\,]$ (see \cite{HartKra}, also 
 \cite{Mas}). This expansion is purely periodic, as expected. Moreover, the largest eigenvalue of $\widetilde{\Omega}(\omega)= \left( \begin{smallmatrix} a+1 & -1 \\ 1 & 0 \end{smallmatrix}\right)\left( \begin{smallmatrix} 1 & 1 \\ 1 & 0 \end{smallmatrix}\right)\left( \begin{smallmatrix} a-1 & 1 \\ 1 & 0 \end{smallmatrix}\right)=  \left( \begin{smallmatrix} a^2+1 & a \\ a & 1 \end{smallmatrix}\right)$ is $a\omega +1=\omega^2$, so that $\varrho_o(\omega)= 4 \log \omega$. 
 \end{Ex1*}
 
 \begin{Ex2*} Let
$\omega =[\, \overline{(a,-1)}\,]=a-\frac{1}{\omega}$, where $a \geq 3$. The minimal polynomial of $\omega$ is $\omega^2-a\omega +1$, so that $\text{disc}(\omega)>0 \Leftrightarrow a \geq 3$, and
\begin{equation*}
     \omega = \frac{a+ \sqrt{a^2-4}}{2}>1 \quad \text{and } \quad \omega^*= \frac{a- \sqrt{a^2-4}}{2}.
\end{equation*}
Observe that for all $a \geq 3$ we have $0< \omega^* \leq 2-G$. Therefore, such a QI $\omega$ is O-, E-, and B-reduced, but not regular reduced. When $a$ is odd, the OCF-expansion of $\omega$ is $[\, \overline{(a,-1)}\,]$. The largest eigenvalue of $\widetilde{\Omega}(\omega)=\left( \begin{smallmatrix} a & -1 \\ 1 & 0 \end{smallmatrix}\right)$ is $\omega$, so that $\varrho_o(\omega)= 2 \log \omega$. When $a$ is even, in a similar way as in Example 1, the OCF-expansion of $\omega$ is $[\,\overline{(a-1,+1),(1,+1),(a-1,-1)} \,]$, so that the largest eigenvalue of $\widetilde{\Omega}(\omega)= \left( \begin{smallmatrix} a^2-1 & -a \\ a & -1 \end{smallmatrix}\right)$, is $a\omega -1=\omega^2$, so that $\varrho_o(\omega)= 4 \log \omega$. Finally, using (\ref{insertion identity}), we obtain the RCF-expansion of $\omega$. As expected, it is not purely periodic: $[\,(a-1,+1),(1,+1),\overline{(a-2,+1),(1,+1)} \,]$.
\end{Ex2*}

\begin{Ex3*} Let $\omega = [\,\overline{(a_1,+1),(a_2,-1)} \,]= a_1+ \cfrac{1}{a_2-\frac{1}{\omega}}$, where $a_1\geq1, \ a_2 \geq 3$. Then $\omega$ is the largest root of the polynomial $a_2\omega^2-(2+a_1a_2)\omega +a_1$:
\begin{equation*}
    \omega= \
    \frac{2+a_1a_2+ \sqrt{4+ (a_1a_2)^2}}{2a_2} >1, \quad 
    \text{and } \quad \omega^*= \frac{2+a_1a_2-\sqrt{4+ (a_1a_2)^2}}{2a_2}.
\end{equation*}
Observe that for all $a_1 \geq 1, \ a_2 \geq 3$ we have $0< \omega^* < 2-G$ . Therefore, such a QI $\omega$ is O-, E-, and B-reduced, but not regular reduced. If both $a_i$'s are odd, the largest eigenvalue of $\widetilde{\Omega}(\omega)=\left( \begin{smallmatrix} (a_1a_2+1)^2-a_1a_2 & -a_1^2a_2 \\ a_1a_2^2 & 1-a_1a_2 \end{smallmatrix}\right) $ is $a_1a_2^2\omega +1-a_1a_2$, so that $\varrho_o(\omega)= 2 \log (a_1a_2^2\omega +1-a_1a_2) $.  If both $a_i$'s are even, the OCF-expansion of $\omega$ is $[\, \overline{(a_1+1,-1),(1,+1),(a_2-1,-1)}\,]$, and one can compute the length of $\omega$ as before. Similarly for the remaining two cases where the $a_i$'s do not have the same parity. 
\end{Ex3*}
\begin{Ex4*} Let $\omega= [\, \overline{(a,1),(2,-1)} \,]= a+\cfrac{1}{2-\frac{1}{\omega}}$, where $a \geq 3$. Then 
\begin{equation*}
    \omega= \frac{a+1+ \sqrt{a^2+1}}{2}>1, \quad \text{and } \quad \omega^*=\frac{a+1- \sqrt{a^2+1}}{2}.
\end{equation*}
Observe that for $a >3 $ we have  $2-G < \omega^* < 1$, so that $\omega$ is an E- and B- reduced QI, but not an O- or regular QI. 
\end{Ex4*}

 It is interesting to observe that if $\omega \in \R_* \cap \R_{*'}$, where $*$ and $*'$ are different elements of $\{R, E, O ,B\}$, then we do not necessarily have $\varrho_*(\omega)=\varrho_{*'}(\omega)$. For example, consider 
 \begin{equation*}\label{example omega appendix}
     \omega=[\, \overline{(a,1),(b,1)}\,], \qquad \text{where } a \text{ is even and } b \text{ is odd.}
 \end{equation*}
Then $\omega$ has a purely periodic RCF expansion, and so is a regular reduced QI. To find its length $\varrho(\omega)$ as a regular reduced QI, observe that $\omega$ is a fixed point of
\begin{equation*}
    \gamma:= \left( \begin{matrix} a & 1 \\ 1 & 0 \end{matrix}\right)\left( \begin{matrix} b & 1 \\ 1 & 0 \end{matrix}\right)=\left( \begin{matrix} ab+1 & a \\ b & 1 \end{matrix}\right) \notin \Gamma \qquad \text{ (cf. (\ref{defn of Gamma}))}.
\end{equation*}
Since $\det(\gamma)=1$, we have that $\varrho(\omega)=2 \log (\mathfrak{r}_\gamma)=2 \log(b\omega+1)$. 

If $b=1$, then $\omega$ is also a fixed point of 
\begin{equation*}
    \gamma':= \left( \begin{matrix} a+1 & -1 \\ 1 & 0 \end{matrix}\right) \left( \begin{matrix} a+1 & 1 \\ 1 & 0 \end{matrix}\right)\left( \begin{matrix} 1 & 1 \\ 1 & 0 \end{matrix}\right) \in \Gamma,
\end{equation*}
so that the OCF expansion of $\omega$ is $\omega =[\, \overline{(a+1,-1),(a+1,1),(1,1)} \,]$. Thus $\omega$ is an O-reduced QI as well. Observe that  $\gamma^2 = \gamma'$, so that 
 $\varrho_o(\omega)= 2 \log (\gamma')=2 \varrho(\omega)$.
 
 If $b\geq 3$, then $\omega$ is a fixed point of 
 \begin{equation*}
    \gamma'':= \left( \begin{matrix} a+1 & -1 \\ 1 & 0 \end{matrix}\right)\left( \begin{matrix} 1 & 1 \\ 1 & 0 \end{matrix}\right) \left( \begin{matrix} b & -1 \\ 1 & 0 \end{matrix}\right)\left( \begin{matrix} 1 & 1 \\ 1 & 0 \end{matrix}\right)\left( \begin{matrix} a-1 & 1 \\ 1 & 0 \end{matrix}\right)\left( \begin{matrix} b & 1 \\ 1 & 0 \end{matrix}\right) \in \Gamma.
\end{equation*}
Thus $\omega$ is an O-reduced QI, and since $\gamma^2=\gamma''$, it follows again that $\varrho_o(\omega)=2\varrho(\omega)$.

More generally, denote by $\R$ the set  of regular reduced QIs which are greater than 1. By Proposition \ref{proposition defn of R's} we have that  $\R \subset \R_O$. Then (\ref{RCF equid}) and Theorem \ref{thm equidist of OCF QI} give that 
\begin{equation*}
    \lim\limits_{R \rightarrow \infty} \ \frac{{ \big| \big\{ \omega \in \R \  \mid 
     \varrho(\omega) \leq R\big\} \big |}}{  {\big| \big\{ \omega \in \R \  \mid 
     \varrho_o(\omega)  \leq R \big\} \big |}}  >  \lim\limits_{R \rightarrow \infty} \ \frac{{ \big| \big\{ \omega \in \R \  \mid 
     \varrho(\omega) \leq R\big\} \big |}}{  {\big| \big\{ \omega \in \R_o \  \mid 
     \varrho_o(\omega)  \leq R \big\} \big |}} =2.
\end{equation*}

\section{Appendix 2}\label{Appendix Ustinov}
In this section we  describe the proof of Lemma \ref{bocaustinov}, part (ii), which extends Lemma 2 in \cite{Ustinov} to the case where $K >1$ and $c \in \mathbb{R}$, rather than just $K=1$ and $c \in \mathbb{Z}$. Following the proof in \cite{Ustinov}, let $P_1,P_2$ be such that $0 < P_2-P_1 \leq q$, where we use $q=m$ in the notation of Lemma \ref{bocaustinov}. 
Set $\mathcal{T}:= \{(x,y) \mid P_1<x\leq P_2, \ 0<y \leq f(x)\}$, and let $\mathbbm{1}_{\mathcal{T}}$ be the characteristic function of $\mathcal{T}$. Partitioning $\mathcal{T}$ into smaller pieces and translating by $q\mathbb{Z}^2$, we can assume that $\mathcal{T} \subset (-\frac{q}{2},\frac{q}{2}]^2$. We can also assume that $c \ll q$. Consider the case $K \geq 0$, the case $K<0$ being similar. Set $e(\theta):= e^{2 \pi i \theta}$, and expand $\mathbbm{1}_{\mathcal{T}}$ in a Fourier series in $\mathbb{Z}_q$:
\begin{equation*}
    F(x,y)= \sum\limits_{-\frac{q}{2}<m,n \leq \frac{q}{2}} \widehat{F}(m,n)e\Big(\frac{mx+ny}{q}\Big), 
\end{equation*}
with
\begin{equation*}\label{Fourier coefficients}
    \widehat{F}(m,n)=\frac{1}{q^2}\sum\limits_{-\frac{q}{2}<x,y \leq \frac{q}{2}}\mathbbm{1}_{\mathcal{T}}(x,y)\ e\Big(-\frac{mx+ny}{q}\Big).
\end{equation*}
Denoting $\delta_q(x)= \begin{cases}
1, & \text{if } q \mid x\\
0, & \text{if } q \nmid x,
\end{cases}$ and  $K_{q,h}(m,n):=  \sum\limits_{\substack{x,y \in \mathbb{Z}_q\\ xy \equiv h \mod q}}e(\frac{mx+ny}{q}) $ we can write
\begin{align}\label{klooster}
 \nonumber   \mathcal{N}_{q,h}(\mathcal{T}) & = \sum\limits_{-\frac{q}{2}<x,y \leq \frac{q}{2}}F(x,y) \delta_q(xy-h) = \sum\limits_{-\frac{q}{2}<x,y,m,n \leq \frac{q}{2}}\widehat{F}(x,y) \delta_q(xy-h)e\Big(\frac{mx+ny}{q}\Big)\\
    &= \sum\limits_{-\frac{q}{2}<m,n \leq \frac{q}{2}}\widehat{F}(m,n) K_{q,h}(m,n).
\end{align}

The term $(m,n)=(0,0)$ in (\ref{klooster}) gives the main term $\frac{\phi(q)}{q^2}\text{Area}(\Omega)+O_K(1)$ in Lemma \ref{bocaustinov}: combining $(h,q)=1$ with $xy \equiv h \mod q$, we get $(x,q)=1$, so that $K_{q,h}(0,0)=\phi(q)$. Moreover, $ \widehat{F}(0,0)= \frac{1}{q^2} |\mathbb{Z}^2 \cap \mathcal{T}|= \frac{1}{q^2}\big( \text{Area}(\Omega)+O_K(q)\big)$. 

It remains to show that the sum of the rest of the terms is $O_{K,\epsilon}(q^{1/2+\epsilon})$. For this we use classical bounds on Kloosterman sums: set $K_q(m,n):= K_{q,1}(m,n)$. Since $(h,q)=1$, we have $K_{q,h}(m,n)= K_q(m,nh)=K_q(mh,n)$. The Kloosterman sum $K_q(m,n)$ satisfies the Weil bound  \cite{Est}
\begin{equation*}\label{Weil bound}
    |K_q(m,n)| \leq  \sigma_0(q)q^{1/2}(q,m,n)^{1/2},
\end{equation*}
where $\sigma_0(q)$ is the number of divisors of $q$. Thus
\begin{equation*}\label{Weil bound h}
   |K_{q,h}(m,n)| = |K_q(m,nh)| \leq \sigma_0(q)q^{1/2}(q,m,nh)^{1/2}= \sigma_0(q)q^{1/2}(q,m,n)^{1/2},  
\end{equation*}
since $(h,q)=1$. Hence, it remains to show that 
\begin{equation}\label{remains to show}
    \sum\limits_{\substack{-\frac{q}{2}<m,n \leq \frac{q}{2}\\ (m,n)\neq (0,0)}}\widehat{F}(m,n) (q,m,n)^{1/2} = O_{K, \epsilon}(q^\epsilon).
\end{equation}
The first step is to obtain an alternate expression for $\widehat{F}(m,n)$:
\begin{align}
     \widehat{F}(m,n) & = \frac{1}{q^2} \sum\limits_{P_1<x \leq P_2}e\Big(-\frac{m x}{q}\Big) \sum\limits_{0< y \leq c+Kx}e\Big(-\frac{n y}{q}\Big) \label{first line of Fourier coef}\\
    \nonumber & = \frac{1}{q^2} \sum\limits_{P_1<x \leq P_2}e\Big(-\frac{m x}{q}\Big) \frac{e(-\frac{n}{q})}{1-e(-\frac{n}{q})} \bigg( 1- e\Big(-\frac{n}{q} ( c  +Kx)\Big)\bigg) \\
     & = \frac{1}{q^2} \frac{e(-\frac{n}{q})}{e(-\frac{n}{q})-1}\bigg( e\Big(-\frac{n  c }{q}\Big) \sum\limits_{P_1<x\leq P_2} e\Big(-\frac{(m+n K) x}{q}\Big) - \sum\limits_{P_1<x\leq P_2}e\Big(-\frac{m x}{q}\Big)\bigg), \label{last line Fourier coef}
\end{align}
where the last two lines are valid only if $n \neq 0$. Thus there are  four cases for $(m,n)$: $m=0$ and $n \neq 0$, $n=0$ and $m \neq 0$, $mn \neq 0$ and $q \mid m+n K$, and finally $mn \neq 0$ and $q \nmid m+nK$. We also use $|\sin x| \geq \frac{2}{\pi}|x|$, for all $x \in [-\frac{\pi}{2},\frac{\pi}{2}]$, to obtain the estimate
\begin{equation}\label{inequality coming from sine}
    \Bigg| \frac{e(-\frac{n}{q})}{e(-\frac{n}{q})-1} \Bigg| = \frac{1}{2|\sin(\frac{\pi n}{q})|} \leq \frac{q}{|n|},
\end{equation}
since $\big|\frac{n}{q}\big|\leq \frac{1}{2}$.

\begin{itemize}
    \item Case $m=0$ and $n \neq 0$: combining (\ref{last line Fourier coef}),  (\ref{inequality coming from sine}), the triangle inequality, and $P_2-P_1 \leq q$ we get 
    \begin{equation*}
        |\widehat{F}(0,n)| \leq \frac{1}{q^2}\cdot \frac{q}{|n|} \cdot 2(P_2-P_1) < \frac{1}{|n|}.
    \end{equation*}
    Therefore, the left hand side of  (\ref{remains to show})  becomes
    \begin{equation*}\label{eq bound case 1}
       \sum\limits_{\substack{-\frac{q}{2}<n \leq \frac{q}{2}\\ n\neq 0}} |\widehat{F}(0,n)| (q,n)^{1/2} \ll \sum\limits_{0<n \leq q} \frac{(q,n)^{1/2}}{n} \leq \sum\limits_{d \mid q} \sum\limits_{n_1 \leq q} \frac{d^{1/2}}{dn_1} \ll \sigma_0(q) \log q \ll_\epsilon q^\epsilon.
    \end{equation*}
 \item    Case $n=0$ and $m \neq 0$: combine (\ref{first line of Fourier coef}) with $c \ll q $ and (\ref{inequality coming from sine}) to get
 \begin{align*}
      \widehat{F}(m,0) & = \frac{1}{q^2} \sum\limits_{P_1<x \leq P_2}e\Big(-\frac{m x}{q}\Big) \lfloor c+ Kx \rfloor \\
     & = \frac{c}{q^2} \sum\limits_{P_1<x \leq P_2}e\Big(-\frac{m x}{q}\Big) + \frac{K}{q^2} \sum\limits_{P_1<x \leq P_2} xe\Big(-\frac{m x}{q}\Big) +O\bigg(\frac{1}{q^2}\sum\limits_{P_1<x \leq P_2}e \Big(-\frac{m x}{q}\Big)\bigg) \\
     & \ll \frac{1}{q|m|}+ \frac{K}{q^2} \sum\limits_{P_1<x \leq P_2} xe\Big(-\frac{m x}{q}\Big) +O\bigg(\frac{1}{q^2|m|}\bigg).
 \end{align*}
 To estimate the last sum, we can assume that  $0\leq P_1<P_2 \leq q$. Consider the derivative of the geometric series for $z=e(-\frac{m}{q})\neq 1$ and combine with (\ref{inequality coming from sine}) to get
 \begin{align*}
     \sum\limits_{P_1 < x \leq P_2} xz^x &= \frac{P_1z^{P_1+1}-P_2z^{P_2+1}}{1-z} + \frac{1}{(1-z)^2}(z^{P_1+1}-z^{P_2+1})\\
     &\ll \frac{q}{|1-z|} +\frac{1}{|1-z|^2} \ll q \frac{q}{|m|} +\bigg(\frac{q}{|m|} \bigg)^2 \ll \frac{q^2}{|m|},
 \end{align*}
 so that $|\widehat{F}(m,0)| \ll_K \frac{1}{|m|}$. Therefore, the left hand side of  (\ref{remains to show})  becomes
\begin{equation*}\label{eq bound case 2}
    \sum\limits_{\substack{-\frac{q}{2} < m \leq \frac{q}{2}\\ m \neq 0}} |\widehat{F}(m,0)|(q,m)^{1/2} \ll \sum\limits_{m \leq q} \frac{(q,m)^{1/2}}{m} \ll_{K,\epsilon} q^\epsilon.
\end{equation*}
\item  Case $mn \neq 0$ and $q \mid m+nK$: we have $ |\widehat{F}(0,n)| < \frac{1}{|n|}$ similar to the first case, so that the left hand side of  (\ref{remains to show})  becomes
\begin{equation*}\label{eq bound case 3}
     \sum\limits_{\substack{-\frac{q}{2}<n \leq \frac{q}{2}\\ n\neq 0}} |\widehat{F}(-nK,n)| (q,-nK,n)^{1/2} \ll \sum\limits_{0<n \leq q} \frac{(q,n)^{1/2}}{n}  \ll_\epsilon q^\epsilon.
\end{equation*}
\item Case $mn \neq 0$ and $q \nmid m+nK$: denote  $\|x\|:= \text{dist}(x,\mathbb{Z})$, and use (\ref{last line Fourier coef}) and
\begin{equation*}
    \sum\limits_{P_1<j \leq P_2} e\Big(-\frac{(m+n k) j}{q}\Big) \ll \frac{1}{|e(-\frac{(m+n k) j}{q})-1|} \ll \frac{1}{\|\frac{m+n k}{q}\|}
\end{equation*}
to get 
\begin{equation*}
   | \widehat{F}(m,n)| \ll \frac{1}{q^2}  \frac{q}{|n|} \bigg( \frac{1}{\|\frac{m+nK}{q}\|} + \frac{q}{|m|} \bigg).
\end{equation*}
Therefore, letting $m'=m+nK$, the left hand side of  (\ref{remains to show})  becomes
\begin{align*}
  \frac{1}{q}  \sum\limits_{\substack{0<|m|,|n| \leq \frac{q}{2}\\ m+nK \not\equiv 0 \mod q}} \frac{(q,m,n)^{1/2}}{|n| \ \|\frac{m+nK}{q}\|} & \leq \frac{1}{q}  \sum\limits_{\substack{0<|n| \leq \frac{q}{2}\\ |m'| \leq (K+1) \frac{q}{2} \\ m' \not\equiv 0 \mod q}} \frac{(q,m',n)^{1/2}}{|n| \ \|\frac{m'}{q}\|}  \leq \frac{4(K+1)}{q}   \sum\limits_{\substack{0<|n| \leq \frac{q}{2}\\ 0<|m'| \leq  \frac{q}{2} }}   \frac{(q,m',n)^{1/2}q}{|nm'| } \\
  & \ll_K \sum\limits_{d \mid q} \sum\limits_{0<|m_1|,|n_1|\leq \frac{q}{2d}} \frac{d^{1/2}}{d^2m_1n_1} \ll_{K,\epsilon} q^\epsilon \log^2q.
\end{align*}
\end{itemize}

\section{Appendix 3}\label{Appendix calculations}

In this Appendix we will prove a few simple facts about the OCF convergents which we use in the proofs of our results.

\begin{lemma}\label{lemma bounds on ratio of convergents}
For all $n\geq 1$ we have
\begin{equation*}
 e_n =+1\  \Longrightarrow \  \frac{p_{n}}{p_{n-1}} \geq g  \quad \text{ and }\quad \frac{q_{n}}{q_{n-1}} \geq g,
\end{equation*}
and
\begin{equation*}
 e_n =-1\  \Longrightarrow \  \frac{p_{n}}{p_{n-1}} \geq G+1 \quad \text{ and } \quad \frac{q_{n}}{q_{n-1}} \geq G+1.
\end{equation*}
\end{lemma}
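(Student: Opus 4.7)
The plan is to prove both estimates simultaneously by induction on $n$, exploiting the recurrence (\ref{recurrent relations}) together with the constraint $a_i + e_i \geq 2$, which forces $a_i \geq 3$ whenever $e_i = -1$. The argument is identical for $p_n/p_{n-1}$ and $q_n/q_{n-1}$, so I will write it for the $p$'s; for the $q$'s one handles $n=1$ separately (where $q_1/q_0 = +\infty$ trivially satisfies both bounds) and otherwise copies the argument verbatim.

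For the base case $n=1$ we have $p_1/p_0 = a_1$, and either $e_1 = +1$ with $a_1 \geq 1 > g$, or $e_1 = -1$ with $a_1 \geq 3 > G+1$. For the inductive step, write
\begin{equation*}
\frac{p_n}{p_{n-1}} \;=\; a_n \;+\; \frac{e_{n-1}}{p_{n-1}/p_{n-2}},
\end{equation*}
and split into two cases on $e_{n-1}$. If $e_{n-1} = +1$, then the induction hypothesis gives $p_{n-1}/p_{n-2} \geq g > 0$, so $p_n/p_{n-1} > a_n$; then $a_n \geq 1 > g$ always, and $a_n \geq 3 > G+1$ whenever $e_n = -1$ (using $a_n + e_n \geq 2$). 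If $e_{n-1} = -1$, then by induction $p_{n-1}/p_{n-2} \geq G+1$, so
\begin{equation*}
\frac{p_n}{p_{n-1}} \;\geq\; a_n - \frac{1}{G+1}.
\end{equation*}

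The key algebraic identities I will use are the standard ones for the golden ratio: $g^2 = 1-g$, $G^2 = G+1$, and $gG = 1$, which together yield $1/(G+1) = g^2$ and $1 - g^2 = g$. Thus in the case $e_{n-1}=-1$ with $a_n \geq 1$ the bound becomes $p_n/p_{n-1} \geq 1 - g^2 = g$, giving the first conclusion; and in the subcase $e_n = -1$ (forcing $a_n \geq 3$) one computes $3 - g^2 = 3 - (1-g) = 2+g = G+1$, giving the second conclusion. This closes the induction.

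There is no substantial obstacle, but the sharp case $e_{n-1} = e_n = -1$ with $a_n = 3$ is where the bound $\geq G+1$ is attained with equality, so one must be careful that the inductive hypothesis delivers exactly $\geq G+1$ (not a strict inequality) in order for the induction to propagate cleanly; this is the reason the lemma is stated with non-strict inequalities.
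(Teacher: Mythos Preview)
Your proof is correct. The paper proves the same bounds by expanding $p_n/p_{n-1}$ as a finite continued fraction in $a_n, a_{n-1},\ldots,a_1$ and then bounding it below by the infinite periodic continued fraction with all partial quotients $(3,-1)$, whose value is the fixed point $G+1$ of $x\mapsto 3-1/x$ (yielding $g = 1 - 1/(G+1)$ in the $e_n=+1$ case). Your inductive argument is the cleaner packaging of exactly the same mechanism: the identities $1/(G+1)=g^2$, $1-g^2=g$, and $3-g^2=G+1$ that close your induction are precisely the fixed-point equations underlying the paper's infinite continued fraction. What your approach buys is that it avoids any appeal to convergence of an infinite continued fraction and makes transparent that the extremal case is the one with $a_n=3$, $e_{n-1}=-1$ (as you note at the end); what the paper's approach buys is perhaps a more visual picture of why $(3,-1)$ repeated is the worst case.
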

\begin{proof}
The recurrent relations for the convergents give
\begin{align*}
    \frac{p_n}{p_{n-1}}= a_n+\cfrac{e_{n-1}}{\cfrac{p_{n-1}}{p_{n-2}}}= \cdots = a_n+\cfrac{e_{n-1}}{a_{n-1}+\cfrac{e_{n-2}}{\ddots + \cfrac{e_1}{a_1}}}.
\end{align*}
When   $e_n=+1$ we use  $a_i+e_i \geq 2$ and, following \cite{Ri2}, we get
\begin{align*}
    \frac{p_{n}}{p_{n-1}} & \geq 1+\cfrac{e_{n-1}}{a_{n-1}+\cfrac{e_{n-2}}{\ddots + \cfrac{e_1}{a_1}}} \geq  1- \cfrac{1}{a_{n-1}+\cfrac{e_{n-2}}{\ddots + \cfrac{e_1}{a_1}}} \geq  1- \cfrac{1}{3+\cfrac{e_{n-2}}{\ddots + \cfrac{e_1}{a_1}}} \geq  1- \cfrac{1}{3-\cfrac{1}{\ddots - \cfrac{1}{3}}} \\
    & \geq  1- \cfrac{1}{3-\cfrac{1}{\ddots - \cfrac{1}{3-\cfrac{1}{\ddots}}}} = g.
\end{align*}

When   $e_n=-1$ we use  $a_i+e_i \geq 2$ to get
\begin{align*}
    \frac{p_{n}}{p_{n-1}} & \geq   3- \cfrac{1}{3-\cfrac{1}{\ddots - \cfrac{1}{3-\cfrac{1}{\ddots}}}} = G+1.
\end{align*}
The corresponding results for $\frac{q_n}{q_{n-1}}$ follow in a similar way. 
\end{proof}
Now we claim that 
\begin{equation}\label{eq q_n greater than 2}
\text{ for all }  n\geq 3 , \  \text{ we have } q_n \geq 2 . 
\end{equation}
Indeed, using  $a_i+e_i \geq 2$ we have 
\begin{equation*}
    q_3=a_3q_2+e_2q_1=a_3a_2+e_2= \begin{cases}
    a_3a_2+1 \geq 2, & \text{if } e_2=+1\\
    a_3a_2-1 \geq 3a_3-1 \geq 2, & \text{if } e_2=-1,
    \end{cases}
\end{equation*}
and
\begin{align*}
    q_4=a_4q_3+e_3a_2=\begin{cases}
    a_4q_3+a_2 \geq 2, & \text{if } e_3=+1\\
    a_4q_3-a_2=a_4(a_3a_2+e_2)-a_2= (a_4a_3-1)a_2+e_2a_4,  & \text{if } e_3=-1.
    \end{cases}
\end{align*}
If $e_2=+1$ we immediately get $(a_4a_3-1)a_2+a_4 \geq 2$, while if $e_2=-1$ we get
\begin{equation*}
    (a_4a_3-1)a_2-a_4 \geq (3a_4-1)a_2-a_4= a_2(a_4-1)+a_4(a_2-1)+a_4a_2\geq 2.
\end{equation*}
Thus $q_3, q_4 \geq 2$. Assume that $q_i \geq 2$ for all $i=3,4,\ldots,n$. Then  the recurrent relations (\ref{recurrent relations}) give $q_{n+1}= a_{n+1}q_n+e_nq_{n-1}$, so that $q_n \geq 2$ when $e_n =+1$, while when $e_n=-1$, Lemma \ref{lemma bounds on ratio of convergents} gives $q_{n+1}= q_{n-1}(a_{n+1}\frac{q_n}{q_{n-1}}-1) \geq q_{n-1}(a_{n+1}(G+1)-1) \geq 2G >2 $. 

We finally claim that
\begin{equation}\label{eq p_n greater that q_n}
    \frac{p_n}{q_n} \geq1, \qquad \text{ for all } n \geq 1.
\end{equation}
It is easy to check by hand that (\ref{eq p_n greater that q_n}) holds for $n=1,2$ and $3$. Assume that (\ref{eq p_n greater that q_n}) holds for all $n=1,2,\ldots N-1$, where $N \geq 4$. Then relation (\ref{recurrent relations}) together with (\ref{eq q_n greater than 2}) give
\begin{equation*}
    p_N= \frac{p_{N-1}}{q_{N-1}}\ q_N \pm \frac{1}{q_{N-1}} \geq \frac{p_{N-1}}{q_{N-1}}\ q_N - \frac{1}{q_{N-1}} \geq \ q_N - \frac{1}{2}.
\end{equation*}
Since $p_N \in \mathbb{Z}$, it follows that $p_N \geq q_N$.

\end{document}